\documentclass[11pt]{amsart}


\usepackage{
    amsmath,
    amsfonts,
    amssymb,
    amsthm,
    amscd,
    comment,
    enumitem,
    etoolbox,
    gensymb,    
    mathtools,
    mathdots,
    stmaryrd,
}
\usepackage[usenames,dvipsnames]{xcolor}
\usepackage[all]{xy}


\usepackage[T1]{fontenc}
\usepackage{bbm}                     
\usepackage[colorlinks=true, linkcolor=blue, citecolor=blue, urlcolor=blue, breaklinks=true]{hyperref}


\DeclareFontFamily{OT1}{pzc}{}
\DeclareFontShape{OT1}{pzc}{m}{it}{<-> s * [1.10] pzcmi7t}{}
\DeclareMathAlphabet{\mathpzc}{OT1}{pzc}{m}{it}


\leftmargin=0in
\topmargin=0pt
\headheight=0pt
\oddsidemargin=0in
\evensidemargin=0in
\textheight=8.75in
\textwidth=6.5in
\parindent=0.5cm
\headsep=0.25in
\widowpenalty10000
\clubpenalty10000


\usepackage[capitalize]{cleveref}   

\crefname{defin}{Definition}{Definitions}
\crefname{eg}{Example}{Examples}
\crefname{egs}{Example}{Examples}
\crefname{lem}{Lemma}{Lemmas}
\crefname{theo}{Theorem}{Theorems}
\crefname{equation}{}{}
\crefname{enumi}{}{}


\newcommand\C{\mathbb{C}}
\newcommand\N{\mathbb{N}}
\newcommand\OO{\mathbb{O}}

\newcommand\R{\mathbb{R}}
\newcommand\Z{\mathbb{Z}}
\newcommand\kk{\Bbbk}
\newcommand\one{\mathbbm{1}}

\newcommand\B{\mathbf{B}}

\newcommand\fg{\mathfrak{g}}
\newcommand\gl{\mathfrak{gl}}           
\newcommand\fS{\mathfrak{S}}            

\newcommand\md{\textup{-mod}}


\newcommand\cC{\mathcal{C}}

\newcommand\Fcat{\mathpzc{F}}           
\newcommand\OB{\mathpzc{OB}}            
\newcommand\Tcat{\mathpzc{T}}           
\newcommand\Wcat{\mathpzc{W}}           

\newcommand\cI{\mathcal{I}}             

\newcommand\go{{\mathsf{I}}}            


\DeclareMathOperator{\Add}{Add}
\DeclareMathOperator{\End}{End}

\DeclareMathOperator{\Hom}{Hom}


\DeclareMathOperator{\Kar}{Kar}
\DeclareMathOperator{\Mat}{Mat}

\DeclareMathOperator{\Ob}{Ob}

\DeclareMathOperator{\Rot}{Rot}
\DeclareMathOperator{\RP}{Re}       
\DeclareMathOperator{\Span}{span}
\DeclareMathOperator{\Switch}{Switch}

\DeclareMathOperator{\Tr}{Tr}
\DeclareMathOperator{\tr}{tr}


\usepackage{tikz}
\usetikzlibrary{arrows.meta}
\usetikzlibrary{decorations.markings}
\usetikzlibrary{calc}


\tikzset{anchorbase/.style={>=To,baseline={([yshift=-0.5ex]current bounding box.center)}}}
\tikzset{ 
    centerzero/.style={>=To,baseline={([yshift=-0.5ex](#1))}},
    centerzero/.default={0,0}
}
\tikzset{wipe/.style={white,line width=3pt}}


\newcommand\braidup{to[out=up,in=down]}
\newcommand\braiddown{to[out=down,in=up]}


\newcommand\opendot[1]{
    \filldraw[fill=white, draw=black] (#1) circle (0.05);
}


\newcommand\bub[1]{
    \draw (#1)++(0,0.2) arc(90:-270:0.2)
}


\newcommand\bubble{
    \begin{tikzpicture}[centerzero]
        \bub{0,0};
    \end{tikzpicture}
}
\newcommand\idstrand[1][a]{
    \begin{tikzpicture}[centerzero]
        \draw (0,-0.2) -- (0,0.2);
    \end{tikzpicture}
}
\newcommand\crossmor{
    \begin{tikzpicture}[centerzero]
        \draw (0.2,-0.2) -- (-0.2,0.2);
        \draw (-0.2,-0.2) -- (0.2,0.2);
    \end{tikzpicture}
}
\newcommand{\cupmor}{
    \begin{tikzpicture}[anchorbase]
        \draw (-0.15,0.15) -- (-0.15,0) arc(180:360:0.15) -- (0.15,0.15);
    \end{tikzpicture}
}
\newcommand{\capmor}{
    \begin{tikzpicture}[anchorbase]
        \draw (-0.15,-0.15) -- (-0.15,0) arc(180:0:0.15) -- (0.15,-0.15);
    \end{tikzpicture}
}
\newcommand\mergemor{
    \begin{tikzpicture}[centerzero]
      \draw (-0.2,-0.2) to (0,0);
      \draw (0.2,-0.2) to (0,0);
      \draw (0,0) to (0,0.2);
    \end{tikzpicture}
}
\newcommand\splitmor{
    \begin{tikzpicture}[centerzero]
      \draw (-0.2,0.2) to (0,0);
      \draw (0.2,0.2) to (0,0);
      \draw (0,0) to (0,-0.2);
    \end{tikzpicture}
}

\newcommand\lollydrop{
    \begin{tikzpicture}[centerzero]
        \draw (0,0.2) -- (0,0) to[out=-40,in=up] (0.15,-0.15) arc(0:-180:0.15) to[out=up,in=-130] (0,0);
    \end{tikzpicture}
}
\newcommand\dotcross{
    \begin{tikzpicture}[centerzero]
        \draw (0.2,-0.2) -- (-0.2,0.2);
        \draw (-0.2,-0.2) -- (0.2,0.2);
        \opendot{0,0};
    \end{tikzpicture}
}
\newcommand\triform{
    \begin{tikzpicture}[centerzero]
        \draw (-0.3,-0.2) -- (0,0.2);
        \draw (0,-0.2) -- (0,0.2);
        \draw (0.3,-0.2) -- (0,0.2);
    \end{tikzpicture}
}
\newcommand\explode{
    \begin{tikzpicture}[centerzero]
        \draw (-0.3,0.2) -- (0,-0.2);
        \draw (0,0.2) -- (0,-0.2);
        \draw (0.3,0.2) -- (0,-0.2);
    \end{tikzpicture}
}
\newcommand\trimor{
    \begin{tikzpicture}[anchorbase]
        \draw (0,0.15) -- (0.13,-0.075) -- (-0.13,-0.075) -- cycle;
        \draw (0,0.15) -- (0,0.3);
        \draw (0.13,-0.075) -- (0.24,-0.185);
        \draw (-0.13,-0.075) -- (-0.24,-0.185);
    \end{tikzpicture}
}


\newcommand\sqmor{
    \begin{tikzpicture}[centerzero]
        \draw (-0.15,-0.15) rectangle (0.15,0.15);
        \draw (-0.3,-0.3) -- (-0.15,-0.15);
        \draw (0.3,-0.3) -- (0.15,-0.15);
        \draw (-0.3,0.3) -- (-0.15,0.15);
        \draw (0.3,0.3) -- (0.15,0.15);
    \end{tikzpicture}
}
\newcommand\pentmor{
    \begin{tikzpicture}[centerzero]
        \draw (0,0.25) -- (-0.24,0.08) -- (-0.15,-0.20) -- (0.15,-0.20) -- (0.24,0.08) -- cycle;
        \draw (0,0.25) -- (0,0.4);
        \draw (-0.24,0.08) -- (-0.35,0.4);
        \draw (0.24,0.08) -- (0.35,0.4);
        \draw (-0.15,-0.2) -- (-0.35,-0.4);
        \draw (0.15,-0.2) -- (0.35,-0.4);
    \end{tikzpicture}
}
\newcommand\Hmor{
    \begin{tikzpicture}[centerzero]
        \draw (-0.3,-0.2) -- (-0.1,0) -- (-0.3,0.2);
        \draw (-0.1,0) -- (0.1,0);
        \draw (0.3,-0.2) -- (0.1,0) -- (0.3,0.2);
    \end{tikzpicture}
}
\newcommand\Imor{
    \begin{tikzpicture}[centerzero]
        \draw (-0.2,-0.3) -- (0,-0.1) -- (0.2,-0.3);
        \draw (-0.2,0.3) -- (0,0.1) -- (0.2,0.3);
        \draw (0,-0.1) -- (0,0.1);
    \end{tikzpicture}
}
\newcommand\hourglass{
    \begin{tikzpicture}[centerzero]
        \draw (-0.15,-0.3) -- (-0.15,-0.25) arc(180:0:0.15) -- (0.15,-0.3);
        \draw (-0.15,0.3) -- (-0.15,0.25) arc(180:360:0.15) -- (0.15,0.3);
    \end{tikzpicture}
}
\newcommand\jail{
    \begin{tikzpicture}[centerzero]
        \draw (-0.15,-0.3) -- (-0.15,0.3);
        \draw (0.15,-0.3) -- (0.15,0.3);
    \end{tikzpicture}
}


\newcommand{\symbox}[2]{
    \filldraw[fill=white,draw=black] (#1) rectangle (#2)
}
\newcommand{\antbox}[2]{
    \filldraw[black] (#1) rectangle (#2)
}


\newtheorem{theo}{Theorem}[section]

\newtheorem{prop}[theo]{Proposition}
\newtheorem{lem}[theo]{Lemma}
\newtheorem{cor}[theo]{Corollary}
\newtheorem{conj}[theo]{Conjecture}

\theoremstyle{definition}
\newtheorem{defin}[theo]{Definition}
\newtheorem{rem}[theo]{Remark}

\numberwithin{equation}{section}
\allowdisplaybreaks

\setenumerate[1]{label=(\alph*)}          

\setcounter{tocdepth}{1}


\newtoggle{comments}
\newtoggle{details}
\newtoggle{detailsnote}


\iftoggle{comments}{%
  \usepackage[notref,notcite]{showkeys}   
  \newcommand{\acomments}[1]{
    \ \\
    {\color{red}
      \textbf{AS:} #1
    }
    \ \\
    }
  \newcommand{\raj}[1]{
    \ \\
    {\color{purple}
      \textbf{For Raj:} #1
    }
    \ \\
    }
}{%
  \newcommand{\acomments}[1]{}
  \newcommand{\raj}[1]{}
}

\iftoggle{details}{%
  \newcommand{\details}[1]{
      \ \\
      {\color{OliveGreen}
        \textbf{Details:} #1
      }
      \\
  }
}{%
  \newcommand{\details}[1]{}
}

\begin{document}

\title{Diagrammatics for $F_4$}

\author{Raj Gandhi}
\address[R.G.]{
  Department of Mathematics and Statistics \\
  University of Ottawa \\
  Ottawa, ON K1N 6N5, Canada
}
\email{rgand037@uottawa.ca}

\author{Alistair Savage}
\address[A.S.]{
  Department of Mathematics and Statistics \\
  University of Ottawa \\
  Ottawa, ON K1N 6N5, Canada
}
\urladdr{\href{https://alistairsavage.ca}{alistairsavage.ca}, \textrm{\textit{ORCiD}:} \href{https://orcid.org/0000-0002-2859-0239}{orcid.org/0000-0002-2859-0239}}
\email{alistair.savage@uottawa.ca}

\author{Kirill Zainoulline}
\address[K.Z.]{
  Department of Mathematics and Statistics \\
  University of Ottawa \\
  Ottawa, ON K1N 6N5, Canada
}
\urladdr{\href{https://mysite.science.uottawa.ca/kzaynull/}{mysite.science.uottawa.ca/kzaynull/}}
\email{kzaynull@uottawa.ca}

\begin{abstract}
    We define a diagrammatic monoidal category, together with a full and essentially surjective monoidal functor from this category to the category of modules over the exceptional Lie algebra of type $F_4$.  In this way, we obtain a set of diagrammatic tools for studying type $F_4$ representation theory analogous to those of the oriented and unoriented Brauer categories in classical type.
\end{abstract}

\subjclass[2020]{18M05, 18M30, 17B10, 17B25, 20G41}

\keywords{Monoidal category, string diagram, exceptional Lie algebra, $F_4$, Albert algebra}

\ifboolexpr{togl{comments} or togl{details}}{%
  {\color{magenta}DETAILS OR COMMENTS ON}
}{%
}

\maketitle
\thispagestyle{empty}


\section{Introduction}

If $V = \C^d$ denotes the natural module for the complex general linear Lie algebra $\gl_d$, Schur--Weyl duality states that the natural actions of $\gl_d$ and the symmetric group $\fS_k$ on $V^{\otimes k}$ commute and generate each other's centralizers.  This classic result can be extended to the definition of a full monoidal functor from the diagrammatic \emph{oriented Brauer category} $\OB_d$ of \cite{BCNR17} to the category of $\gl_d$-modules.  (Throughout the paper we consider \emph{finite-dimensional} modules.)  This functor sends the two generating objects $\uparrow$ and $\downarrow$ of $\OB_d$ to $V$ and its dual $V^*$.  Since \linebreak $\Hom_{\OB_d}(\uparrow^{\otimes k}) \cong \C \fS_k$, we have an induced surjective algebra homomorphism
\[
    \C \fS_k \twoheadrightarrow \End_{\gl_d}(V^{\otimes k}),
\]
recovering one of the statements of Schur--Weyl duality.  After passing to the additive Karoubi envelope $\Kar(\OB_d)$ of $\OB_d$, the above functor is also essentially surjective.  It follows that $\gl_d$-mod is a quotient of $\Kar(\OB_d)$ by a tensor ideal.  This observation allows one to use powerful and intuitive diagrammatic techniques in the study of the representation theory of the general linear Lie algebra.  For example, this approach leads to the definition of Deligne's interpolating categories \cite{Del07}.

Analogues of the above theory also exist in types $BCD$.  Here the oriented Brauer category is replaced by the \emph{Brauer category} of \cite{LZ15}, whose endomorphism algebras are \emph{Brauer algebras}.  However, analogous techniques in \emph{exceptional} type are not as well developed.  For type $G_2$, the diagrammatic category has been described by Kuperberg \cite{Kup94,Kup96}.  (In fact, Kuperberg treats the quantum case; see below.)  Invariant tensors for classical and exceptional semisimple Lie algebras have been computed diagrammatically by Cvitanovi\'{c} \cite{Cvi08}, but the approach there is rather different, inspired by the language of Feynman diagrams in quantum field theory.  This approach has been further investigated for exceptional Lie algebras in \cite{MT07,Wen03,Wes03}.

The goal of the current paper is to develop a diagrammatic category for type $F_4$ analogous to the oriented and unoriented Brauer categories in classical type.  Hints of the defining relations appear in the aforementioned papers.  In particular, several of the equations deduced in the current paper can be found in \cite[Ch.~19]{Cvi08} and \cite{Thu04} in a different language.  However, a complete treatment from the monoidal category point of view seems to be new.

Given a field $\kk$ of characteristic zero, we define a strict monoidal $\kk$-linear category $\Fcat = \Fcat_{\alpha,\delta}$, depending on two parameters $\alpha,\delta \in \kk$.  (In fact, up to isomorphism, the category is independent of $\alpha$; see \cref{whyalpha}.)  We consider the strict $\kk$-linear monoidal category generated by a single object $\go$ and four morphisms
\[
    \mergemor \colon \go^{\otimes 2} \to \go,\qquad
    \crossmor \colon \go^{\otimes 2} \to \go^{\otimes 2},\qquad
    \cupmor \colon \one \to \go^{\otimes 2},\qquad
    \capmor \colon \go^{\otimes 2} \to \one,
\]
where $\one$ is the unit object.  These morphisms are subject to certain relations, which we split into two families.   We denote by $\Tcat = \Tcat_{\alpha,\delta}$ the category obtained by imposing the first family of relations; see \cref{Tdef}.  These relations imply, in particular, that the category is symmetric monoidal and strict pivotal, the trivalent vertex is symmetric, and the generating object $\go$ is symmetrically self-dual of categorical dimension $\delta$.  To obtain the category $\Fcat$, we then impose three additional relations; see \cref{Fdef}.  The first of these is the relation
\begin{equation} \label{CH}
    \Hmor + \Imor +
    \begin{tikzpicture}[centerzero]
        \draw (-0.2,-0.4) -- (-0.2,0.4);
        \draw (-0.2,-0.2) -- (0.4,0.4);
        \draw (0.4,-0.4) -- (-0.2,0.2);
    \end{tikzpicture}
    = \frac{2\alpha}{\delta+2}
    \left(\, \jail + \hourglass + \crossmor\, \right),
\end{equation}
while the other two express the square and pentagon in terms of acyclic diagrams.

When $\kk=\C$, we define (\cref{magneto,baja}) a monoidal functor
\[
    \Phi \colon \Fcat_{7/3,26} \to \fg\md,
\]
where $\fg$ is the complex simple Lie algebra of type $F_4$.  The generating object $\go$ is sent to the natural $\fg$-module $V$, which is $26$-dimensional.  The compact Lie group $G$ corresponding to $\fg$ is the group of algebra automorphisms of the Albert algebra which, over the complex numbers, is the unique exceptional Jordan algebra.  The natural module $V$ can be identified with the traceless part of the Albert algebra.  Multiplication in the Albert algebra gives rise to a $\fg$-module homomorphism $V^{\otimes 2} \to V$, which is the image under $\Phi$ of the trivalent vertex $\mergemor$.  The morphism $\crossmor$ corresponds to the symmetric braiding in $\fg$-mod, and $\capmor$ is sent to the natural invariant bilinear form on $V$ coming from the trace on the Albert algebra.  The morphism $\cupmor$ is also determined by this bilinear form.  In this way, the category $\Fcat$ can also be viewed as a diagrammatic calculus for the Albert algebra.  The relation \cref{CH} corresponds to the Cayley--Hamilton theorem for $V$; see \cref{prestige}.

The functor $\Phi$ is defined only when $\delta=26$, since that is the dimension of the natural $\fg$-module $V$.  However, the diagrammatic category $\Fcat_{\alpha,\delta}$ is defined for any $\delta \ne -2$.  (When $\delta=-2$, the preliminary category $\Tcat$ collapses to the trivial category.)  The importance of the case $\delta=26$ can be seen purely diagrammatically.  It corresponds to the fact that $V$ is not a summand of the tensor square of the first fundamental representation of $\fg$.  See \cref{sack}.

Since the category $\fg$-mod is idempotent complete, the functor $\Phi$ induces a functor
\[
    \Kar(\Phi) \colon \Kar(\Fcat_{7/3,26}) \to \fg\md.
\]
Then $\Kar(\Phi)$ is full since $\Phi$ is.  In addition, we show (\cref{splat}) that $\Kar(\Phi)$ is essentially surjective.  Thus $\fg\md$ is equivalent to a quotient of the diagrammatic category $\Fcat$ by a tensor ideal.  In fact, we conjecture that $\Kar(\Phi)$ is also faithful, and hence an equivalence of categories; see \cref{faithful,hesitant}.  We also give (\cref{pipe}) conjectural spanning sets for the morphism spaces in $\Fcat$.

One immediate consequence of the fullness of $\Phi$ (\cref{SW}) is that we have surjective algebra homomorphisms
\[
    \End_{\Fcat}(\go^{\otimes k}) \twoheadrightarrow \End_\fg(V^{\otimes k}),\quad k \in \N.
\]
In other words, the endomorphism algebras in $\Fcat$ play the role in type $F_4$ that the oriented and unoriented Brauer algebras play in the classical types.

In classical types, quantum versions of the relevant diagrammatic categories exist.  In type $A$, the quantum analogue of the oriented Brauer category is the framed HOMFLYPT skein category.  In types $BCD$, the analogue of the unoriented Brauer category is the Kauffman skein category.  As their names suggest, both categories are closely related to important knot invariants.  In type $G_2$, the connection to the corresponding Reshetikhin--Turaev invariant is discussed in \cite{Kup94,Kup96}.  There should also exist a quantum analogue of the category $\Fcat$, related to the Reshetikhin--Turaev invariant in type $F_4$.  These quantum diagrammatics should also be related to a quantum version of the Albert algebra.

The category $\Fcat$ is also a first step towards a category of webs for type $F_4$.  The main goal in the theory of webs is to give a presentation, in terms of generators and relations, for the full monoidal subcategory of the category of modules for a quantized enveloping algebra, generated by the fundamental modules.  Such presentations are typically in terms of diagrammatic categories known as \emph{web categories}.  Web categories were first developed for rank two simple complex Lie algebras by Kuperberg \cite{Kup96}.  Then, in more general type $A$, they were described by Cautis--Kamnitzer--Morrison \cite{CKM14}.  More recently, the type $C$ case has been treated in \cite{BERT21}; see also \cite{Wes08}.  The degenerate (that is, $q=1$) web category for type $F_4$ should be the full monoidal subcategory of $\Kar(\Fcat)$ generated by objects corresponding to the four fundamental modules.  We explicitly identify three of these objects in \cref{sec:fundamental}.

\subsection*{Acknowledgements}

The research of R.G.\ was supported by an Ontario Graduate Scholarship and a Canada Graduate Scholarship from the Natural Sciences and Engineering Research Council of Canada (NSERC).  The research of A.S.\ and K.Z.\ was supported by NSERC Discovery Grants RGPIN-2017-03854 and RGPIN-2015-04469, respectively.  We thank Erhard Neher for helpful conversations and Bruce Westbury for useful remarks on an earlier version of this paper.  We are also grateful to the referees for comments that helped to improve the paper.

\section{The diagrammatic category\label{sec:defin}}

We fix a field $\kk$ of characteristic zero.  All categories are $\kk$-linear and all algebras and tensor products are over $\kk$ unless otherwise specified.  We let $\one$ denote the unit object of a monoidal category.  For objects $X$ and $Y$ in a category $\cC$, we denote by $\cC(X,Y)$ the vector space of morphisms from $X$ to $Y$.

\begin{defin} \label{Tdef}
    Fix $\alpha \in \kk^\times$ and $\delta \in \kk$.  Let $\Tcat = \Tcat_{\alpha,\delta}$ be the strict monoidal category generated by the object $\go$ and generating morphisms
    \begin{equation} \label{lego}
        \mergemor \colon \go \otimes \go \to \go,\quad
        \crossmor \colon \go \otimes \go \to \go \otimes \go,\quad
        \cupmor \colon \one \to \go \otimes \go,\quad
        \capmor \colon \go \otimes \go \to \one,
    \end{equation}
    subject to the following relations:
    \begin{gather} \label{vortex}
        \begin{tikzpicture}[centerzero]
            \draw (-0.3,-0.4) -- (-0.3,0) arc(180:0:0.15) arc(180:360:0.15) -- (0.3,0.4);
        \end{tikzpicture}
        =
        \begin{tikzpicture}[centerzero]
            \draw (0,-0.4) -- (0,0.4);
        \end{tikzpicture}
        =
        \begin{tikzpicture}[centerzero]
            \draw (-0.3,0.4) -- (-0.3,0) arc(180:360:0.15) arc(180:0:0.15) -- (0.3,-0.4);
        \end{tikzpicture}
        \ ,\quad
        \splitmor
        :=
        \begin{tikzpicture}[anchorbase]
            \draw (-0.4,0.2) to[out=down,in=180] (-0.2,-0.2) to[out=0,in=225] (0,0);
            \draw (0,0) -- (0,0.2);
            \draw (0.3,-0.3) -- (0,0);
        \end{tikzpicture}
        =
        \begin{tikzpicture}[anchorbase]
            \draw (0.4,0.2) to[out=down,in=0] (0.2,-0.2) to[out=180,in=-45] (0,0);
            \draw (0,0) -- (0,0.2);
            \draw (-0.3,-0.3) -- (0,0);
        \end{tikzpicture}
        \ ,\quad
        \begin{tikzpicture}[centerzero]
            \draw (-0.2,-0.3) -- (-0.2,-0.1) arc(180:0:0.2) -- (0.2,-0.3);
            \draw (-0.3,0.3) \braiddown (0,-0.3);
        \end{tikzpicture}
        =
        \begin{tikzpicture}[centerzero]
            \draw (-0.2,-0.3) -- (-0.2,-0.1) arc(180:0:0.2) -- (0.2,-0.3);
            \draw (0.3,0.3) \braiddown (0,-0.3);
        \end{tikzpicture}
        \ ,
        \\ \label{venom}
        \begin{tikzpicture}[centerzero]
            \draw (0.2,-0.4) to[out=135,in=down] (-0.15,0) to[out=up,in=-135] (0.2,0.4);
            \draw (-0.2,-0.4) to[out=45,in=down] (0.15,0) to[out=up,in=-45] (-0.2,0.4);
        \end{tikzpicture}
        =
        \begin{tikzpicture}[centerzero]
            \draw (-0.15,-0.4) -- (-0.15,0.4);
            \draw (0.15,-0.4) -- (0.15,0.4);
        \end{tikzpicture}
        \ ,\quad
        \begin{tikzpicture}[centerzero]
            \draw (0.3,-0.4) -- (-0.3,0.4);
            \draw (0,-0.4) to[out=135,in=down] (-0.25,0) to[out=up,in=-135] (0,0.4);
            \draw (-0.3,-0.4) -- (0.3,0.4);
        \end{tikzpicture}
        =
        \begin{tikzpicture}[centerzero]
            \draw (0.3,-0.4) -- (-0.3,0.4);
            \draw (0,-0.4) to[out=45,in=down] (0.25,0) to[out=up,in=-45] (0,0.4);
            \draw (-0.3,-0.4) -- (0.3,0.4);
        \end{tikzpicture}
        \ ,\quad
        \begin{tikzpicture}[anchorbase,scale=0.8]
            \draw (-0.4,-0.5) -- (0.2,0.3) -- (0.4,0.1) -- (0,-0.5);
            \draw (0.2,0.3) -- (0.2,0.6);
            \draw (-0.4,0.6) -- (-0.4,0.1) -- (0.4,-0.5);
        \end{tikzpicture}
        =
        \begin{tikzpicture}[anchorbase]
            \draw (-0.4,-0.4) -- (-0.2,-0.2) -- (-0.2,0) -- (0.2,0.4);
            \draw (0,-0.4) -- (-0.2,-0.2);
            \draw (0.2,-0.4) -- (0.2,0) -- (-0.2,0.4);
        \end{tikzpicture}
        \ ,\quad
        \begin{tikzpicture}[anchorbase,scale=0.8]
            \draw (-0.4,0.5) -- (0.2,-0.3) -- (0.4,-0.1) -- (0,0.5);
            \draw (0.2,-0.3) -- (0.2,-0.6);
            \draw (-0.4,-0.6) -- (-0.4,-0.1) -- (0.4,0.5);
        \end{tikzpicture}
        =
        \begin{tikzpicture}[anchorbase]
            \draw (-0.4,0.4) -- (-0.2,0.2) -- (-0.2,0) -- (0.2,-0.4);
            \draw (0,0.4) -- (-0.2,0.2);
            \draw (0.2,0.4) -- (0.2,0) -- (-0.2,-0.4);
        \end{tikzpicture}
        \ ,
        \\ \label{chess}
        \begin{tikzpicture}[anchorbase]
            \draw (-0.15,-0.4) to[out=45,in=down] (0.15,0) arc(0:180:0.15) to[out=down,in=135] (0.15,-0.4);
        \end{tikzpicture}
        = \capmor
        \ ,\quad
        \begin{tikzpicture}[anchorbase]
            \draw (-0.2,-0.5) to[out=45,in=down] (0.15,-0.2) to[out=up,in=-45] (0,0) -- (0,0.2);
            \draw (0.2,-0.5) to [out=135,in=down] (-0.15,-0.2) to[out=up,in=-135] (0,0);
        \end{tikzpicture}
        = \mergemor
        \ ,\quad
        \begin{tikzpicture}[centerzero]
            \draw  (0,-0.4) -- (0,-0.2) to[out=45,in=down] (0.15,0) to[out=up,in=-45] (0,0.2) -- (0,0.4);
            \draw (0,-0.2) to[out=135,in=down] (-0.15,0) to[out=up,in=-135] (0,0.2);
        \end{tikzpicture}
        = \alpha\
        \begin{tikzpicture}[centerzero]
            \draw(0,-0.4) -- (0,0.4);
        \end{tikzpicture}
        \ ,\quad
        \bubble = \delta 1_\one,
        \quad
        \lollydrop = 0.
    \end{gather}
\end{defin}

\begin{rem} \label{whyalpha}
    We can rescale $\capmor$ by $\alpha$ and $\cupmor$ by $\alpha^{-1}$ to see that $\Tcat_{\alpha,\delta}$ is isomorphic to $\Tcat_{1,\delta}$.  However, it will be useful in the forthcoming applications to include the parameter $\alpha$ in our definition.  In particular, we will be most interested in the case where $\alpha = \frac{7}{3}$ and $\delta=26$.
\end{rem}

\begin{rem} \label{meow}
    The relations in \cref{Tdef} all have conceptual category-theoretic meanings:
    \begin{itemize}
        \item The relations \cref{venom} and the last equality in \cref{vortex} correspond to the fact that the crossing endows $\Tcat$ with the structure of a symmetric monoidal category.
        \item The first two equalities in \cref{vortex}, together with the first equality in \cref{chess} assert that the generating object $\go$ is symmetrically self-dual.  Then the fourth equality in \cref{vortex} implies that $\Tcat$ is strict pivotal.  (See the discussion after \cref{windy}.)
        \item The second equality in \cref{chess} can be viewed as stating that $\mergemor$ corresponds to a commutative binary operation on $\go$.
        \item The fourth relation in \cref{chess} states that $\go$ has categorical dimension $\delta$.  (See \cref{sec:fundamental} for further discussion of categorical dimension.)
        \item If we want $\go$ to be a simple object (more precisely, $\Tcat(\go,\go) = \kk 1_\go$), not isomorphic to $\one$, then the fifth equality in \cref{chess} corresponds to the fact that there are no nonzero morphisms $\one \to \go$, while the left-hand side of the third equality in \cref{chess} must be a scalar multiple (which we denote by $\alpha$) of the identity $1_\go$.
    \end{itemize}
\end{rem}

Invariance of morphisms under rectilinear isotopy follows immediately from the interchange law in a monoidal category.  The next proposition will imply full isotopy invariance.  We define
\begin{equation}
    \dotcross
    :=
    \begin{tikzpicture}[centerzero]
        \draw (-0.2,-0.4) -- (-0.2,0.4);
        \draw (-0.2,-0.2) -- (0.4,0.4);
        \draw (0.4,-0.4) -- (-0.2,0.2);
    \end{tikzpicture}
    \ .
\end{equation}

\begin{prop} \label{windy}
    The following relations hold in $\Tcat$:
    \begin{gather} \label{topsy}
        \begin{tikzpicture}[anchorbase]
            \draw (-0.4,-0.2) to[out=up,in=180] (-0.2,0.2) to[out=0,in=135] (0,0);
            \draw (0,0) -- (0,-0.2);
            \draw (0.3,0.3) -- (0,0);
        \end{tikzpicture}
        =
        \mergemor
        =
        \begin{tikzpicture}[anchorbase]
            \draw (0.4,-0.2) to[out=up,in=0] (0.2,0.2) to[out=180,in=45] (0,0);
            \draw (0,0) -- (0,-0.2);
            \draw (-0.3,0.3) -- (0,0);
        \end{tikzpicture}
        \ ,\quad
        \triform
        :=
        \begin{tikzpicture}[centerzero]
          \draw (-0.2,-0.2) to (0,0);
          \draw (0.2,-0.2) to (0,0);
          \draw (0,0) arc(0:180:0.2) -- (-0.4,-0.2);
        \end{tikzpicture}
        =
        \begin{tikzpicture}[centerzero]
          \draw (-0.2,-0.2) to (0,0);
          \draw (0.2,-0.2) to (0,0);
          \draw (0,0) arc(180:0:0.2) -- (0.4,-0.2);
        \end{tikzpicture}
        \ ,\quad
        \explode
        :=
        \begin{tikzpicture}[centerzero]
          \draw (-0.2,0.2) to (0,0);
          \draw (0.2,0.2) to (0,0);
          \draw (0,0) arc(360:180:0.2) to (-0.4,0.2);
        \end{tikzpicture}
        =
        \begin{tikzpicture}[centerzero]
          \draw (-0.2,0.2) to (0,0);
          \draw (0.2,0.2) to (0,0);
          \draw (0,0) arc(180:360:0.2) to (0.4,0.2);
        \end{tikzpicture}
        \ ,
        \\ \label{turvy}
        \begin{tikzpicture}[centerzero]
            \draw (-0.2,0.3) -- (-0.2,0.1) arc(180:360:0.2) -- (0.2,0.3);
            \draw (-0.3,-0.3) to[out=up,in=down] (0,0.3);
        \end{tikzpicture}
        =
        \begin{tikzpicture}[centerzero]
            \draw (-0.2,0.3) -- (-0.2,0.1) arc(180:360:0.2) -- (0.2,0.3);
            \draw (0.3,-0.3) to[out=up,in=down] (0,0.3);
        \end{tikzpicture}
        \ ,\quad
        \begin{tikzpicture}[anchorbase]
            \draw (-0.2,0.2) -- (0.2,-0.2);
            \draw (-0.4,0.2) to[out=down,in=225,looseness=2] (0,0) to[out=45,in=up,looseness=2] (0.4,-0.2);
        \end{tikzpicture}
        =
        \crossmor
        =
        \begin{tikzpicture}[anchorbase]
            \draw (0.2,0.2) -- (-0.2,-0.2);
            \draw (0.4,0.2) to[out=down,in=-45,looseness=2] (0,0) to[out=135,in=up,looseness=2] (-0.4,-0.2);
        \end{tikzpicture}
        \ ,\quad
        \begin{tikzpicture}[anchorbase]
            \draw (-0.2,0.2) -- (0.2,-0.2);
            \draw (-0.4,0.2) to[out=down,in=225,looseness=2] (0,0) to[out=45,in=up,looseness=2] (0.4,-0.2);
            \opendot{0,0};
        \end{tikzpicture}
        =
        \dotcross
        =
        \begin{tikzpicture}[anchorbase]
            \draw (0.2,0.2) -- (-0.2,-0.2);
            \draw (0.4,0.2) to[out=down,in=-45,looseness=2] (0,0) to[out=135,in=up,looseness=2] (-0.4,-0.2);
            \opendot{0,0};
        \end{tikzpicture}
        \ .
    \end{gather}
\end{prop}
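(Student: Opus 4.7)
The plan is to derive each identity in \cref{topsy} and \cref{turvy} from the defining relations of $\Tcat$ in \cref{Tdef}, by short sequences of diagrammatic manipulations. The overall strategy in each case is uniform: insert a snake (a cup-cap pair, trivialized by the first relation of \cref{vortex}) somewhere in one of the diagrams, apply the definition of the split morphism from the second relation of \cref{vortex} to rewrite an intermediate sub-diagram, and then remove a residual snake to reach the desired form.

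First I would prove the two equalities in the first line of \cref{topsy}. Each left-hand side is a trivalent vertex with one upper leg bent around so as to become a bottom input; interpreting this vertex as the split morphism $\splitmor$ with one output bent down through a cap, and applying the second relation in \cref{vortex} to rewrite the split morphism as a merge with a cup on one side, produces a cup-cap pair that cancels by the snake identity. What remains is the merge $\mergemor$. The triform and explode equalities then follow directly: each asserts that composing a merge (respectively split) with a cap (respectively cup) on opposite sides yields the same morphism, which reduces, via the first line of \cref{topsy}, to a statement already established.

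For the first equality in \cref{turvy}, I observe that the statement is the upside-down version of the third equality in \cref{vortex}: a strand sliding through a cup rather than past a cap. One derives it by capping off the top of the cup using the snake identity so as to reduce to the cap version of the relation. The second and third equalities in \cref{turvy} say that the crossing and the dotted crossing are unchanged under a pivotal rotation through a cup-cap pair; these follow by combining the snake identity with the third relation of \cref{vortex} (which lets one slide a cap past a crossing), so that a strand bent through a cup-cap snake on one side can be transported to the other. For the dotted crossing, one further uses the symmetry/braid relations of \cref{venom} to handle the extra strand introduced by the definition of $\dotcross$.

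The main obstacle is bookkeeping. None of the individual diagrammatic moves is hard, but each identity involves several strands in specific relative positions and requires identifying the correct sub-diagram to which each relation applies. The trickiest cases are likely the rotated crossings in \cref{turvy}, where one must carefully track how the crossing interacts with the cup/cap insertions and the naturality relation in \cref{vortex}. Once the first line of \cref{topsy} is in place, however, the remaining identities reduce to short sequences of standard diagrammatic moves, and the statement of the proposition is exactly the data needed to upgrade rectilinear isotopy invariance to full isotopy invariance.
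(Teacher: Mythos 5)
Your treatment of \cref{topsy} and of the first three equalities in \cref{turvy} is essentially the same as the paper's: the first two equalities of \cref{topsy} are unpacked via the split-merge relation and a snake cancellation, the triform and explode equalities then fall out, the cup-version of the naturality relation follows by inserting a snake and reducing to the cap version, and the crossing rotation follows by sliding the crossing over cups and caps. All of that is sound and matches the paper.

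The gap is in your treatment of the $\dotcross$ rotations (the fourth and fifth equalities of \cref{turvy}), which you lump together with the plain crossing and dispatch with the sentence that ``one further uses the symmetry/braid relations of \cref{venom} to handle the extra strand.'' This is the only nontrivial equality in the proposition, and the plan as written does not work. Rotating $\dotcross$ pivotally and sliding the crossing through cups/caps gives you a diagram built from a merge, a split, and a crossing in a \emph{different} configuration from $\dotcross$; the relations in \cref{venom} governing crossings past trivalent vertices do not directly identify these two configurations. The paper's actual argument has to do something more specific: first rewrite the rotated $\dotcross$ entirely in terms of trivalent vertices (using the split definitions in \cref{vortex} and the rotations from \cref{topsy}), then manufacture a \emph{rotated} version of the fourth relation of \cref{venom} by pre-composing with $\crossmor$ and post-composing with a suitable permutation (invoking the first relation of \cref{venom}), and finally close the argument with the second relation of \cref{chess}, i.e.\ the commutativity of $\mergemor$. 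Your outline never invokes \cref{chess} at all and does not mention the rotation-of-\cref{venom} trick, so it would stall at precisely the step that carries the weight of the proposition. You should also note that the crossing rotation and the $\dotcross$ rotation are separate equalities (2nd/3rd versus 4th/5th of \cref{turvy}); the former is easy and the latter requires the extra work above.
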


\begin{proof}
    The first and second equalities in \cref{topsy} follow immediately from the first four equalities in \cref{vortex}.  Then, using the first and second equalities in \cref{topsy}, we have
    \[
        \begin{tikzpicture}[centerzero]
          \draw (-0.2,-0.2) to (0,0);
          \draw (0.2,-0.2) to (0,0);
          \draw (0,0) arc(0:180:0.2) -- (-0.4,-0.2);
        \end{tikzpicture}
        =
        \begin{tikzpicture}[centerzero]
            \draw (0,-0.2) to (0,0) to[out=45,in=up,looseness=2] (0.3,-0.2);
            \draw (0,0) to[out=135,in=up,looseness=2] (-0.3,-0.2);
        \end{tikzpicture}
        =
        \begin{tikzpicture}[centerzero]
          \draw (-0.2,-0.2) to (0,0);
          \draw (0.2,-0.2) to (0,0);
          \draw (0,0) arc(180:0:0.2) -- (0.4,-0.2);
        \end{tikzpicture}
        \ ,
    \]
    proving the fourth equality in \cref{topsy}.  The proof of the sixth equality in \cref{topsy} is analogous.

    To prove the first equality in \cref{turvy}, we use \cref{vortex} to compute
    \[
        \begin{tikzpicture}[centerzero]
            \draw (-0.2,0.3) -- (-0.2,0.1) arc(180:360:0.2) -- (0.2,0.3);
            \draw (-0.3,-0.3) to[out=up,in=down] (0,0.3);
        \end{tikzpicture}
        =
        \begin{tikzpicture}[anchorbase]
            \draw (-1,0.5) -- (-1,0.2) arc(180:360:0.2) arc(180:0:0.2) arc(180:360:0.2) -- (0.2,0.5);
            \draw (-0.3,-0.3) \braidup (0,0.5);
        \end{tikzpicture}
        =
        \begin{tikzpicture}[anchorbase]
            \draw (-1,0.5) -- (-1,0.2) arc(180:360:0.2) arc(180:0:0.2) arc(180:360:0.2) -- (0.2,0.5);
            \draw (-0.5,-0.3) \braidup (-0.8,0.5);
        \end{tikzpicture}
        =
        \begin{tikzpicture}[centerzero]
            \draw (-0.2,0.3) -- (-0.2,0.1) arc(180:360:0.2) -- (0.2,0.3);
            \draw (0.3,-0.3) to[out=up,in=down] (0,0.3);
        \end{tikzpicture}
        \ .
    \]
    The second and third equalities in \cref{turvy} now follow from sliding the crossing over the cup or cap, and then using the first two equalities in \cref{vortex}.

    It remains to prove the fourth equality in \cref{turvy}, since the fifth equality then follows easily using the first two relations in \cref{vortex}.  Using the second and third relations in \cref{vortex} and the first two relations in \cref{topsy} to rotate trivalent vertices, we have
    \[
        \begin{tikzpicture}[anchorbase]
            \draw (-0.2,0.2) -- (0.2,-0.2);
            \draw (-0.4,0.2) to[out=down,in=225,looseness=2] (0,0) to[out=45,in=up,looseness=2] (0.4,-0.2);
            \opendot{0,0};
        \end{tikzpicture}
        =
        \begin{tikzpicture}[anchorbase]
            \draw (-0.2,0.4) -- (-0.2,0.2) -- (0,0) -- (0.2,0.2) -- (0.2,0.4);
            \draw (-0.2,0.2) -- (-0.4,0) -- (0,-0.4);
            \draw (0,0) -- (-0.4,-0.4);
        \end{tikzpicture}
        \ .
    \]
    Now, composing the fourth equality in \cref{venom} on the bottom with $\crossmor$ and on the top with
    $
        \begin{tikzpicture}[centerzero]
            \draw (-0.2,-0.2) -- (0,0.2);
            \draw (0,-0.2) -- (0.2,0.2);
            \draw (0.2,-0.2) -- (-0.2,0.2);
        \end{tikzpicture}
    $
    , and then using the first equality in \cref{venom}, we have
    \[
        \begin{tikzpicture}[anchorbase,scale=0.8]
            \draw (0.4,0.5) -- (-0.2,-0.3) -- (-0.4,-0.1) -- (0,0.5);
            \draw (-0.2,-0.3) -- (-0.2,-0.6);
            \draw (0.4,-0.6) -- (0.4,-0.1) -- (-0.4,0.5);
        \end{tikzpicture}
        =
        \begin{tikzpicture}[anchorbase]
            \draw (0.4,0.4) -- (0.2,0.2) -- (0.2,0) -- (-0.2,-0.4);
            \draw (0,0.4) -- (0.2,0.2);
            \draw (-0.2,0.4) -- (-0.2,0) -- (0.2,-0.4);
        \end{tikzpicture}
        \ .
    \]
    Using this and the second equality in \cref{chess}, we have
    \[
        \begin{tikzpicture}[anchorbase]
            \draw (-0.2,0.4) -- (-0.2,0.2) -- (0,0) -- (0.2,0.2) -- (0.2,0.4);
            \draw (-0.2,0.2) -- (-0.4,0) -- (0,-0.4);
            \draw (0,0) -- (-0.4,-0.4);
        \end{tikzpicture}
        =
        \begin{tikzpicture}[centerzero]
            \draw (-0.2,-0.4) -- (-0.2,0.4);
            \draw (-0.2,-0.2) -- (0.4,0.4);
            \draw (0.4,-0.4) -- (-0.2,0.2);
        \end{tikzpicture}
        =
        \dotcross\ ,
    \]
    completing the verification of the fourth equality in \cref{turvy}.
\end{proof}

It follows from \cref{vortex,topsy,turvy} that the cups and caps equip $\Tcat$ with the structure of a \emph{strict pivotal} category.  Intuitively, this means that morphisms are invariant under ambient isotopy fixing the boundary.  Thus, for example, it makes sense to allow horizontal strands in diagrams:
\begin{equation}
    \Hmor
    :=
    \begin{tikzpicture}[anchorbase]
        \draw (-0.4,-0.4) -- (-0.4,0) -- (-0.2,0.2) -- (0.2,-0.2) -- (0.4,0) -- (0.4,0.4);
        \draw (-0.2,0.2) -- (-0.2,0.4);
        \draw (0.2,-0.2) -- (0.2,-0.4);
    \end{tikzpicture}
    =
    \begin{tikzpicture}[anchorbase]
        \draw (0.4,-0.4) -- (0.4,0) -- (0.2,0.2) -- (-0.2,-0.2) -- (-0.4,0) -- (-0.4,0.4);
        \draw (0.2,0.2) -- (0.2,0.4);
        \draw (-0.2,-0.2) -- (-0.2,-0.4);
    \end{tikzpicture}
    \ .
\end{equation}
In addition, since the object $\go$ is self-dual, the cups and caps yield natural isomorphisms
\begin{equation} \label{twirl}
    \Tcat(\go^{\otimes m}, \go^{\otimes n})
    \cong \Tcat(\go^{\otimes (m+n)},\one),\qquad
    n,m \in \N.
\end{equation}

\begin{defin} \label{Fdef}
    Fix $\alpha \in \kk^\times$ and $\delta \in \kk$, with $\delta \ne -2$.  Let $\Fcat = \Fcat_{\alpha,\delta}$ be the strict monoidal category obtained from $\Tcat_{\alpha,\delta}$ by imposing the following three additional relations:
    \begin{gather} \label{magic}
        \Hmor + \Imor + \dotcross
        = \frac{2\alpha}{\delta+2}
        \left(\, \jail + \hourglass + \crossmor\, \right),
        \\ \label{sqburst}
        \sqmor
        =
        \frac{\alpha^2 (\delta + 14)}{2(\delta+2)^2} \left(\, \jail + \hourglass \, \right)
        + \frac{\alpha (\delta-6)}{2(\delta+2)} \left(\, \Hmor + \Imor\, \right)
        + \frac{3\alpha^2 (2-\delta)}{2(\delta+2)^2} \, \crossmor\ ,
        \\ \label{pentburst}
        \begin{aligned}
            \pentmor &=
            \frac{\alpha(10-\delta)}{4(\delta+2)}
            \left(
                \begin{tikzpicture}[anchorbase]
                    \draw (-0.2,0) -- (0,0.25) -- (0.2,0);
                    \draw (0,0.25) -- (0,0.4);
                    \draw (-0.2,-0.25) -- (-0.2,0) -- (-0.3,0.4);
                    \draw (0.2,-0.25) -- (0.2,0) -- (0.3,0.4);
                \end{tikzpicture}
                +
                \begin{tikzpicture}[anchorbase]
                    \draw (-0.3,0.3) -- (0,0) -- (0.3,0.3);
                    \draw (0,0.3) -- (-0.15,0.15);
                    \draw (0,0) -- (0,-0.15) -- (-0.15,-0.3);
                    \draw (0,-0.15) -- (0.15,-0.3);
                \end{tikzpicture}
                +
                \begin{tikzpicture}[anchorbase]
                    \draw (0.3,0.3) -- (0,0) -- (-0.3,0.3);
                    \draw (0,0.3) -- (0.15,0.15);
                    \draw (0,0) -- (0,-0.15) -- (0.15,-0.3);
                    \draw (0,-0.15) -- (-0.15,-0.3);
                \end{tikzpicture}
                +
                \begin{tikzpicture}[anchorbase]
                    \draw (-0.3,-0.3) -- (0.3,0.3);
                    \draw (-0.3,0.3) -- (-0.15,-0.15);
                    \draw (0,0.3) -- (0.15,0.15);
                    \draw (0,0) -- (0.3,-0.3);
                \end{tikzpicture}
                +
                \begin{tikzpicture}[anchorbase]
                    \draw (0.3,-0.3) -- (-0.3,0.3);
                    \draw (0.3,0.3) -- (0.15,-0.15);
                    \draw (0,0.3) -- (-0.15,0.15);
                    \draw (0,0) -- (-0.3,-0.3);
                \end{tikzpicture}
            \right)
            \\
            &\qquad - \frac{\alpha^2 (\delta+30)}{8(\delta+2)^2}
            \left(
                \begin{tikzpicture}[centerzero]
                    \draw (-0.15,-0.3) -- (-0.15,-0.23) arc(180:0:0.15) -- (0.15,-0.3);
                    \draw (-0.3,0.3) -- (0,0.08) -- (0.3,0.3);
                    \draw (0,0.3) -- (0,0.08);
                \end{tikzpicture}
                +
                \begin{tikzpicture}[centerzero]
                    \draw (-0.2,-0.3) -- (-0.2,0.3);
                    \draw (0,0.3) -- (0.15,0) -- (0.3,0.3);
                    \draw (0.15,0) -- (0.15,-0.3);
                \end{tikzpicture}
                +
                \begin{tikzpicture}[centerzero]
                    \draw (0.2,-0.3) -- (0.2,0.3);
                    \draw (0,0.3) -- (-0.15,0) -- (-0.3,0.3);
                    \draw (-0.15,0) -- (-0.15,-0.3);
                \end{tikzpicture}
                +
                \begin{tikzpicture}[centerzero]
                    \draw (-0.3,0.3) -- (-0.3,0.23) arc(180:360:0.15) -- (0,0.3);
                    \draw (0.3,0.3) -- (0.15,0) -- (-0.2,-0.3);
                    \draw (0.2,-0.3) -- (0.15,0);
                \end{tikzpicture}
                +
                \begin{tikzpicture}[centerzero]
                    \draw (0.3,0.3) -- (0.3,0.23) arc(360:180:0.15) -- (0,0.3);
                    \draw (-0.3,0.3) -- (-0.15,0) -- (0.2,-0.3);
                    \draw (-0.2,-0.3) -- (-0.15,0);
                \end{tikzpicture}
            \right)
            \\
            &\qquad + \frac{3 \alpha^2 (\delta-2)}{8(\delta+2)^2}
            \left(
                \begin{tikzpicture}[centerzero]
                    \draw (0,0.3) -- (0,-0.15) -- (-0.15,-0.3);
                    \draw (0,-0.15) -- (0.15,-0.3);
                    \draw (-0.2,0.3) -- (-0.2,0.25) arc(180:360:0.2) -- (0.2,0.3);
                \end{tikzpicture}
                +
                \begin{tikzpicture}[centerzero]
                    \draw (0,0.3) to[out=-45,in=70] (0.15,-0.3);
                    \draw (-0.3,0.3) -- (0,0) -- (0.3,0.3);
                    \draw (0,0) -- (-0.15,-0.3);
                \end{tikzpicture}
                +
                \begin{tikzpicture}[centerzero]
                    \draw (0,0.3) to[out=225,in=110] (-0.15,-0.3);
                    \draw (0.3,0.3) -- (0,0) -- (-0.3,0.3);
                    \draw (0,0) -- (0.15,-0.3);
                \end{tikzpicture}
                +
                \begin{tikzpicture}[centerzero]
                    \draw (-0.3,0.3) -- (-0.15,0.15) -- (0,0.3);
                    \draw (-0.15,0.15) -- (0.15,-0.3);
                    \draw (0.3,0.3) -- (-0.15,-0.3);
                \end{tikzpicture}
                +
                \begin{tikzpicture}[centerzero]
                    \draw (0.3,0.3) -- (0.15,0.15) -- (0,0.3);
                    \draw (0.15,0.15) -- (-0.15,-0.3);
                    \draw (-0.3,0.3) -- (0.15,-0.3);
                \end{tikzpicture}
            \right).
        \end{aligned}
    \end{gather}
\end{defin}

\begin{rem}
    We will see in \cref{prestige} that \cref{magic} corresponds to the Cayley--Hamilton theorem for traceless $3 \times 3$ octonionic matrices (see \cref{boysenberry,mango}).  Note also that, if $\delta \ne 2$, then \cref{sqburst} allows one to write $\crossmor$ in terms of diagrams built from $\mergemor$, $\cupmor$, and $\capmor$.  Thus $\Fcat$ is a \emph{trivalent category} in the sense of \cite{MPS17}.
\end{rem}

Before proceeding further, let us motivate the assumption in \cref{Fdef} that $\delta \ne -2$.  In fact, we could multiply both sides of \cref{magic,sqburst,pentburst} by an appropriate power of $\delta+2$ to clear this factor from the denominators.  For instance, \cref{magic} then becomes
\begin{equation} \label{jordan}
    (\delta+2) \left( \Hmor + \Imor + \dotcross \right)
    = 2 \alpha
    \left(\, \jail + \hourglass + \crossmor\, \right).
\end{equation}
Then it makes sense to allow $\delta = -2$.  In this case, \cref{jordan} gives
$
    \crossmor
    \ = -\
    \begin{tikzpicture}[centerzero]
        \draw (-0.15,-0.3) -- (-0.15,0.3);
        \draw (0.15,-0.3) -- (0.15,0.3);
    \end{tikzpicture}
    \ -\
    \begin{tikzpicture}[centerzero]
        \draw (-0.15,-0.3) -- (-0.15,-0.25) arc(180:0:0.15) -- (0.15,-0.3);
        \draw (-0.15,0.3) -- (-0.15,0.25) arc(180:360:0.15) -- (0.15,0.3);
    \end{tikzpicture}
    \ .
$
Composing on the top with $\mergemor$ and using the second and fifth equalities in \cref{chess} then gives $\mergemor=0$.  (Here we use that the characteristic of the field is not $2$.)  But then the third equality in \cref{chess} implies that $1_\one = 0$, and hence the category $\Fcat$ collapses to the trivial category.  On the other hand, omitting \cref{chess} from the presentation would result in the \emph{Temperley--Lieb category}, which is the strict monoidal category generated by the object $\go$ and morphisms $\cupmor$, $\capmor$, subject to the first two equalities in \cref{vortex} and the fourth equality in \cref{chess}.

\section{Dimension restrictions}

In this section we show that, with some mild restrictions on $\delta$, any quotient of the category $\Tcat$ with certain conditions on the dimensions of the morphism spaces $\go^{\otimes 2} \to \go^{\otimes 2}$ and $\go^{\otimes 2} \to \go^{\otimes 3}$ satisfies the additional relations \cref{magic,sqburst,pentburst}, and hence is also a quotient of $\Fcat$.  Later, in \cref{sec:functor}, we will use this result to show that the functor $\Tcat \to \fg\md$ defined in \cref{magneto} factors through $\Fcat$ (\cref{baja}).

Recall that an \emph{ideal} in a $\kk$-linear category $\cC$ is a collection $\cI$ of vector subspaces $\cI(X,Y)$ of $\cC(X,Y)$ for all $X,Y \in \Ob(\cC)$ such that
\[
    \cC(Y,Z) \circ \cI(X,Y) \subseteq \cI(X,Z)
    \quad \text{and} \quad
    \cI(X,Y) \circ \cC(Z,X) \subseteq \cI(Z,Y)
\]
for all $X,Y,Z \in \Ob(\cC)$.  If, in addition, $\cC$ is a monoidal category, then we say $\cI$ is a \emph{tensor ideal} if it is an ideal and
\[
    1_Z \otimes \cI(X,Y) \subseteq \cI(Z \otimes X, Z \otimes Y)
    \quad \text{and} \quad
    \cI(X,Y) \otimes 1_Z \subseteq \cI(X \otimes Z, Y \otimes Z)
\]
for all $X,Y,Z \in \Ob(\cC)$.  If $\cI$ is a tensor ideal, it follows that $f \otimes g$ and $g \otimes f$ belong to $\cI$ for arbitrary morphisms $f$ in $\cI$ and $g$ in $\cC$.  If $\cI$ is a tensor ideal of $\cC$, then the \emph{quotient category} $\cC/\cI$ is the category with
\[
    \Ob (\cC/\cI) = \Ob(\cC),\quad
    (\cC/\cI)(X,Y) = \cC(X,Y) / \cI(X,Y).
\]
The composition and tensor product in $\cC/\cI$ are induced by those in $\cC$.

\begin{theo} \label{demayo}
    Assume $\delta \notin \{-2,2,6,10\}$.  If $\cI$ is a tensor ideal of $\Tcat$ such that
    \begin{equation} \label{cinco}
        \dim \left( (\Tcat/\cI)(\go^{\otimes 2}, \go^{\otimes 2}) \right) = 5
        \quad \text{and} \quad
        \dim \left( (\Tcat/\cI)(\go^{\otimes 3}, \go^{\otimes 2}) \right) \le 15
    \end{equation}
    then relations \cref{magic,sqburst,pentburst} hold in $\Tcat/\cI$.
\end{theo}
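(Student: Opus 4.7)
The approach is to use each dimension hypothesis in $\Tcat/\cI$ to force a nontrivial linear relation among a natural family of diagrams, and then to pin down its coefficients by composing with cups, caps, and trivalent vertices and invoking the scalar values already available in $\Tcat$: $\bubble=\delta$, $\lollydrop=0$, the bigon value $\alpha$ from the third equality of \cref{chess}, and hence the theta graph value $\alpha\delta$.

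For \cref{magic}, I would consider the six diagrams $\jail$, $\hourglass$, $\crossmor$, $\Hmor$, $\Imor$, $\dotcross$ in $(\Tcat/\cI)(\go^{\otimes 2},\go^{\otimes 2})$. First I would show that these span the space, by placing an arbitrary such diagram in trivalent normal form and reducing bigons via \cref{chess} until no small face remains. The ambient dihedral action on the four boundary points induces an $S_3$-action on the three matchings, and each of the triples $\{\jail,\hourglass,\crossmor\}$ and $\{\Hmor,\Imor,\dotcross\}$ is a single $S_3$-orbit. Assuming each triple is linearly independent (verified by closure tests), the dimension $5$ forces a one-dimensional intersection of the two triples' spans; by $S_3$-invariance this intersection is spanned by the trivial-orbit sums, so the relation must take the form $\Hmor+\Imor+\dotcross=c(\jail+\hourglass+\crossmor)$ for some $c\in\kk$. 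To compute $c$, I would compose both sides with $\mergemor$ on top: the bigon relation gives $\mergemor\circ\Imor=\alpha\mergemor$, the lollypop relation gives $\mergemor\circ\hourglass=0$, and the remaining terms reduce similarly to scalar multiples of $\mergemor$ via the $\Tcat$-relations. Comparing coefficients in $(\Tcat/\cI)(\go^{\otimes 2},\go)$ yields $c=2\alpha/(\delta+2)$, which requires $\delta\ne -2$.

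For \cref{sqburst}, once \cref{magic} is in hand the five diagrams $\jail,\hourglass,\crossmor,\Hmor,\Imor$ form a basis of $(\Tcat/\cI)(\go^{\otimes 2},\go^{\otimes 2})$, so $\sqmor$ expands uniquely in this basis. I would determine the five coefficients by computing five independent closures of $\sqmor$ (for example, capping against each basis element or composing with $\mergemor$ at various positions) and solving the resulting linear system. The coefficient matrix is of Gram type, and its determinant, up to units, involves factors $(\delta\pm 2)$, $(\delta-6)$, and $(\delta-10)$; this is precisely what forces the hypothesis $\delta\notin\{-2,2,6,10\}$. The argument for \cref{pentburst} is parallel: the fifteen acyclic diagrams on the right-hand side together with $\pentmor$ give sixteen elements in $(\Tcat/\cI)(\go^{\otimes 3},\go^{\otimes 2})$, so the dimension bound forces a linear dependence whose coefficients one extracts by closures landing in the now-understood space $(\Tcat/\cI)(\go^{\otimes 2},\go^{\otimes 2})$ and further in $(\Tcat/\cI)(\one,\one)=\kk$.

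The main obstacle is computational rather than conceptual: each coefficient requires a distinct closure evaluation, and repeated use of the $\Tcat$-relations generates many intermediate terms that must be simplified. Two reductions cut the work substantially: the $S_3$-action on channels (and analogous symmetries on three-strand diagrams) and the pivotal flip symmetry, which identify many a priori distinct closures. The excluded values of $\delta$ correspond to precisely those at which the relevant Gram matrices degenerate and the putative relations would fail.
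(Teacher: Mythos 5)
Your broad strategy matches the paper's: use the dimension hypotheses to force linear dependences, exploit the $\fS_3$-action generated by $\Rot$ and $\Switch$ on the six formal diagrams in $\Tcat(\go^{\otimes 2},\go^{\otimes 2})$ to locate the relation, then pin down coefficients by evaluating compositions. But several of the concrete steps would fail as written.

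The most serious issue is circularity in the coefficient computations. To determine $c$ in $\Hmor+\Imor+\dotcross=c\,(\jail+\hourglass+\crossmor)$ you propose composing with $\mergemor$ on top; but $\mergemor\circ\Hmor$ and $\mergemor\circ\dotcross$ each produce the triangle $\trimor$, which is \emph{not} reducible to a scalar multiple of $\mergemor$ by the $\Tcat$-relations alone. Your operation therefore yields one equation in the two unknowns $c$ and the triangle coefficient and never determines $c$. (The paper instead composes on the \emph{bottom} with $\cupmor$, producing only bubbles, lollipops, and bigons --- all already scalars in $\Tcat$ --- giving $2\alpha\,\cupmor=c(\delta+2)\,\cupmor$ directly.) The proposed Gram-matrix determination of the square coefficients is circular for the same reason: closing a pairing such as $\Hmor$ against $\Hmor$ or $\Hmor$ against $\Imor$ produces a closed square or pentagon, whose value is exactly what you are trying to establish. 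The paper avoids this by using rotation invariance of $\sqmor$ to cut to three unknowns, attaching a symmetrizer to \cref{magic}, and composing with $\Hmor$, so that only previously established relations (bigons, \cref{triangle}, \cref{magic}) are needed to form a solvable linear system, with no extra restriction on $\delta$ at that stage.

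You also gloss over two necessary case analyses. For \cref{magic}, writing ``assuming each triple is linearly independent'' and concluding the dependence lies in the trivial-orbit span is not automatic: the kernel of the evaluation map from the six-dimensional formal span onto $(\Tcat/\cI)(\go^{\otimes 2},\go^{\otimes 2})$ is a nonzero $\fS_3$-submodule and could a priori meet the $(2,1)$-isotypic component. The paper explicitly rules this out by showing it forces either $\jail=\hourglass$ (hence $\mergemor=0$ and collapse) or that the four acyclic diagrams already span the morphism space, contradicting the dimension hypothesis; closure tests cannot certify linear independence here because nondegeneracy of the trace form on the unknown quotient is itself not available. For \cref{pentburst}, asserting that the sixteen morphisms admit a dependence and that its coefficients can be extracted by closures does not establish that the dependence actually involves $\pentmor$. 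Ruling out a dependence purely among the fifteen acyclic morphisms is the delicate half of the paper's argument (and is precisely where the extra exclusion on $\delta$ enters); only after that can one compose with $\Hmor$, invoke \cref{sqburst} and \cref{triangle}, and solve for the pentagon's coefficients.
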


The proof of \cref{demayo} will occupy the remainder of this section.  We break the proof into a series of smaller results.
\begin{center}
    \textit{We assume for the remainder of this section that $\delta \ne -2$.}
\end{center}

For $m,n \ge 1$, consider the linear operators
\begin{align*}
    \Rot \colon \Tcat(\go^{\otimes m}, \go^{\otimes n}) &\to \Tcat(\go^{\otimes m}, \go^{\otimes n}),
    &
    \Rot
    \left(
        \begin{tikzpicture}[anchorbase]
            \draw[line width=2] (-0.1,-0.4) -- (-0.1,0);
            \draw (0.1,-0.4) -- (0.1,0);
            \draw (-0.1,0) -- (-0.1,0.4);
            \draw[line width=2] (0.1,0) -- (0.1,0.4);
            \filldraw[fill=white,draw=black] (-0.25,0.2) rectangle (0.25,-0.2);
            \node at (0,0) {$\scriptstyle{f}$};
        \end{tikzpicture}
    \right)
    &=
    \begin{tikzpicture}[anchorbase]
        \draw (-0.25,0.2) rectangle (0.25,-0.2);
        \node at (0,0) {$\scriptstyle{f}$};
        \draw (-0.4,-0.4) -- (-0.4,0.2) arc (180:0:0.15);
        \draw (0.4,0.4) -- (0.4,-0.2) arc(360:180:0.15);
        \draw[line width=2] (-0.1,-0.4) -- (-0.1,-0.2);
        \draw[line width=2] (0.1,0.4) -- (0.1,0.2);
    \end{tikzpicture}
    \ ,
    \\
    \Switch \colon \Tcat(\go^{\otimes 2}, \go^{\otimes n}) &\to \Tcat(\go^{\otimes 2}, \go^{\otimes n}),
    &
    \Switch
    \left(
        \begin{tikzpicture}[anchorbase]
            \draw (-0.1,-0.4) -- (-0.1,0);
            \draw (0.1,-0.4) -- (0.1,0);
            \draw[line width=2] (0,0.2) -- (0,0.4);
            \filldraw[fill=white,draw=black] (-0.25,0.2) rectangle (0.25,-0.2);
            \node at (0,0) {$\scriptstyle{f}$};
        \end{tikzpicture}
    \right)
    &=
    \begin{tikzpicture}[anchorbase]
        \draw (-0.1,-0.5) -- (0.1,-0.2);
        \draw (0.1,-0.5) -- (-0.1,-0.2);
        \draw[line width=2] (0,0.2) -- (0,0.4);
        \filldraw[fill=white,draw=black] (-0.25,0.2) rectangle (0.25,-0.2);
        \node at (0,0) {$\scriptstyle{f}$};
    \end{tikzpicture}
    \ ,
\end{align*}
where the bottom and top thick strands in the definition of $\Rot$ represent $1_\go^{\otimes (m-1)}$ and $1_\go^{\otimes (n-1)}$, respectively, and the thick strand in the definition of $\Switch$ represents $1_\go^{\otimes n}$.  In other words,
\[
    \Rot(f) = \left( \capmor \otimes 1_\go^{\otimes n} \right) \circ (1_\go \otimes f \otimes 1_\go) \circ (1_\go^{\otimes m} \otimes \cupmor),\quad
    \Switch(f) = f \circ \crossmor.
\]
Any tensor ideal of $\Tcat$ or $\Fcat$ is invariant under $\Rot$ and $\Switch$.  We have
\begin{equation} \label{rotary}
    \begin{aligned}
        \Rot \left(\, \jail\, \right) &= \hourglass\, ,&
        \Rot \left(\, \hourglass\, \right) &= \jail\, ,&
        \Rot \left(\, \crossmor\, \right) &= \crossmor\, ,
        \\
        \Rot \left(\, \Hmor\, \right) &= \Imor\, ,&
        \Rot \left(\, \Imor\, \right) &= \Hmor\, ,&
        \Rot \left(\, \dotcross\, \right) &= \dotcross\, ,
    \end{aligned}
\end{equation}
and
\begin{equation} \label{flick}
    \begin{aligned}
        \Switch \left(\, \jail\, \right) &= \crossmor\, ,&
        \Switch \left(\, \crossmor\, \right) &= \jail\, ,&
        \Switch \left(\, \hourglass\, \right) &= \hourglass\, ,
        \\
        \Switch \left(\, \Hmor\, \right) &= \dotcross\, ,&
        \Switch \left(\, \dotcross\, \right) &= \Hmor\, ,&
        \Switch \left(\, \Imor\, \right) &= \Imor\, .
    \end{aligned}
\end{equation}

\begin{prop} \label{SUP}
    If $\cI$ is a tensor ideal of $\Tcat$ such that
    \begin{equation} \label{funf}
        \dim \left( (\Tcat/\cI)(\go^{\otimes 2}, \go^{\otimes 2}) \right) = 5,
    \end{equation}
    then \cref{magic} holds in $\Tcat/\cI$, and the morphisms
    \begin{equation} \label{bigfive}
        \jail\, ,\quad \hourglass\, ,\quad \crossmor\, ,\quad \Hmor\, ,\quad \Imor
    \end{equation}
    give a basis for $(\Tcat/\cI)(\go^{\otimes 2}, \go^{\otimes 2})$.
\end{prop}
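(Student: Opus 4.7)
The strategy is to show that the six morphisms $\jail, \hourglass, \crossmor, \Hmor, \Imor, \dotcross$ span $(\Tcat/\cI)(\go^{\otimes 2},\go^{\otimes 2})$, use the dimension hypothesis together with the symmetry induced by $\Rot,\Switch$ to produce a linear relation of a particular form among them, and finally evaluate the ratio of coefficients by composing with $\capmor$.

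First I would verify the spanning claim. By the strict pivotal structure from \cref{windy}, any element of $\Tcat(\go^{\otimes 2},\go^{\otimes 2})$ is a linear combination of trivalent graphs with four external endpoints; the relations $\bubble=\delta$, $\lolly=\lollydrop=0$, the vertex-to-vertex bubble collapse (third equality in \cref{chess}), and the symmetric monoidal relations reduce any such graph to one with no closed loops and no cycles of length $\le 2$. Enumerating such reduced graphs by the (necessarily even) number $V$ of internal trivalent vertices shows that for $V\le 2$ the only non-redundant configurations are the six morphisms listed, while larger $V$ always produces an additional short cycle that is eliminated by the same reductions. Since the quotient has dimension five, these six morphisms admit a space of linear relations of dimension at least one.

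The operators $\Rot$ and $\Switch$ are defined by composition with and tensor by fixed morphisms, so they preserve every tensor ideal of $\Tcat$ and descend to linear involutions on the quotient. By \cref{rotary,flick} they act on the span of our six morphisms by independently permuting $\{\jail,\hourglass,\crossmor\}$ and $\{\Hmor,\Imor,\dotcross\}$ as copies of the natural action of $S_3$ on three letters, so the span decomposes as two copies of the $S_3$-representation $\mathrm{triv}\oplus\mathrm{std}$. Any $S_3$-invariant line of relations must therefore lie in the two-dimensional trivial isotypic component, forcing the relation to have the form
\[
    A(\jail+\hourglass+\crossmor)+B(\Hmor+\Imor+\dotcross)=0,\qquad (A,B)\ne(0,0).
\]

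To fix the ratio $A:B$, I would compose on top with $\capmor$. Using $\dotcross=\Hmor\circ\crossmor$ from \cref{flick}, the vertex-to-vertex bubble collapse, and $\lolly=0$, one finds $\capmor\circ\jail=\capmor\circ\crossmor=\capmor$, $\capmor\circ\hourglass=\delta\capmor$, $\capmor\circ\Hmor=\capmor\circ\dotcross=\alpha\capmor$, and $\capmor\circ\Imor=0$, so the composed relation reads $\bigl(A(\delta+2)+2\alpha B\bigr)\capmor=0$ in $(\Tcat/\cI)(\go^{\otimes 2},\one)$. If $\capmor$ vanished in the quotient then the zig-zag relation (first equality of \cref{vortex}) would force $1_\go=0$ and collapse the entire quotient to zero dimensions, contradicting the hypothesis. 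Hence $A(\delta+2)+2\alpha B=0$, and since $\delta\ne -2$ and $\alpha\ne 0$ neither coefficient vanishes; rescaling yields precisely \cref{magic}. Finally, \cref{magic} expresses $\dotcross$ as a linear combination of the other five morphisms, so \cref{bigfive} spans the five-dimensional quotient and is therefore a basis. The main obstacle will be justifying the initial graphical reduction that identifies the six morphisms as a spanning set, since arbitrary diagrams with many internal vertices need to be systematically reduced using the bubble, lollipop, and vertex-bubble relations.
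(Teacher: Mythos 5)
The determination of the ratio of coefficients by composing with $\capmor$ (equivalently $\cupmor$ in the paper), including the ruling-out of $\capmor = 0$, is correct and matches the paper. The final step (eliminate $\dotcross$ via \cref{magic}) is also fine. However, there is a genuine gap at the very start.

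Your argument hinges on the claim that the six morphisms $\jail, \hourglass, \crossmor, \Hmor, \Imor, \dotcross$ \emph{span} $(\Tcat/\cI)(\go^{\otimes 2},\go^{\otimes 2})$, and you justify this by a graph-reduction argument asserting that ``larger $V$ always produces an additional short cycle that is eliminated by the same reductions.'' This is false: in $\Tcat$ the only available local reductions are the removal of closed circles (dimension $\delta$), self-loops (lollipops vanish), and bigons (the third relation in \cref{chess}). Triangles, squares, pentagons, and longer cycles are \emph{not} reducible in $\Tcat$ --- eliminating them is exactly the content of relations \cref{magic}, \cref{sqburst}, and \cref{pentburst}, which are not available in $\Tcat/\cI$ a priori. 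So $\Tcat(\go^{\otimes 2},\go^{\otimes 2})$ is far from $6$-dimensional, and even in the $5$-dimensional quotient there is no a priori reason the six listed diagrams should span (they could themselves collapse to a subspace of smaller dimension). Once the spanning claim is removed, your reasoning ``any $S_3$-invariant line of relations must lie in the trivial isotypic component'' is no longer forced: $\cI \cap U$ (with $U$ the $6$-dimensional span of the graphs as formal symbols) is an $S_3$-subrepresentation of $U$, but its dimension need not be one, and it could a priori contain a copy of the standard representation sitting in $U_3 \oplus U_4$.

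The paper handles exactly this point: it only uses that six vectors in a $5$-dimensional space must be linearly dependent (so $\cI \cap U \ne 0$), and then it \emph{proves} that $\cI \cap (U_3 \oplus U_4) = 0$ by deriving contradictions. Concretely, if $\cI$ met $U_3 \oplus U_4$, it would contain a diagonal copy of the standard representation, giving relations of the form \cref{croatia}; the paper shows that these either force $\mergemor = 0$ (hence the trivial category) when $\lambda = 0$, or else force $(\Tcat/\cI)(\go^{\otimes 2},\go^{\otimes 2})$ to be spanned by just the four acyclic diagrams $\jail, \hourglass, \Hmor, \Imor$ when $\lambda \ne 0$, contradicting the hypothesis that the dimension is $5$. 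Only then does $\cI \cap U$ land in $U_1 \oplus U_2$, and from there the argument proceeds as you describe. You would need to supply this case analysis to close the gap; the spanning claim, by contrast, should be dropped (it is a \emph{consequence} of the proposition, not an input).
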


\begin{proof}
    Suppose $\cI$ is a tensor ideal of $\Tcat$ satisfying \cref{funf}, and let $\cC = \Tcat/\cI$.  Then, in $\cC$, there must be a linear dependence relation involving the six morphisms
    \begin{equation} \label{bigsix}
        \jail\, ,\quad \hourglass\, ,\quad \crossmor\, ,\quad \Hmor\, ,\quad \Imor\, ,\quad \dotcross\, .
    \end{equation}
    Let us for a moment view the diagrams in \cref{bigsix} as graphs (with $\dotcross$ being a 4-valent vertex) embedded in the plane.  The operators $\Rot$ and $\Switch$ act on this set as in \cref{rotary,flick}.  It follows that $\Rot$ and $\Switch$ generate an action of the symmetric group $\fS_3$ on the $6$-dimensional space $U$ spanned by $\cref{bigsix}$ (viewed only as embedded planar graphs).  The space $U$ decomposes as a direct sum
    \[
        U = U_1 \oplus U_2 \oplus U_3 \oplus U_4,
    \]
    where
    \[
        U_1 = \Span_\kk \left\{ \jail\, +\, \hourglass\, +\, \crossmor \right\}
        \quad \text{and} \quad
        U_2 = \Span_\kk \left\{ \Hmor\, +\, \Imor\, +\,  \dotcross \right\}
    \]
    are copies of the trivial $\fS_3$-module,
    \[
        U_3 = \Span_\kk \left\{ \jail\, +\, \hourglass\, -2\, \crossmor\, ,\ \jail\, -\, \hourglass \right\}
        \quad \text{and} \quad
        U_4 = \Span_\kk \left\{ \Hmor\, +\, \Imor\, -2\, \dotcross\, ,\ \Hmor\, -\, \Imor \right\}
    \]
    are copies of the unique simple $\fS_3$-module of dimension $2$ (that is, the Specht module corresponding to the partition $(2,1)$), and there is an isomorphism $U_3 \xrightarrow{\cong} U_4$ sending the given basis of $U_3$ to the given basis of $U_4$.  If some linear combination $u$ of the elements \cref{bigsix} is zero in $\Tcat/\cI$, then all elements of the $\fS_3$-submodule generated by $u$ are also zero in $\Tcat/\cI$.

    First consider the case where $\cI(\go^{\otimes 2}, \go^{\otimes 2}) \cap (U_3 \oplus U_4) \ne \{0\}$.  By the above discussion, $\cI$ then contains at least the span of the vectors
    \[
        \lambda \left( \Hmor - \Imor \right)
        + \mu \left( \jail\, -\, \hourglass\, \right)
        \quad \text{and} \quad
        \lambda \left( \Hmor\, +\, \Imor\, -2\, \dotcross \right)
        + \mu \left(\, \jail\, +\, \hourglass\, -2\, \crossmor \right)
    \]
    for some $\lambda,\mu \in \kk$, not both zero.  Now, if $\lambda = 0$, then we have $\jail\, =\, \hourglass$.  Composing on the top with $\mergemor$ and using the fourth relation in \cref{chess}, we then get $\mergemor=0$.  But then $\Tcat/\cI$ is trivial, as noted at the end of \cref{sec:defin}.  This contradicts our hypothesis \cref{funf}.

    Thus, we may suppose $\lambda \ne 0$, in which case relations of the form
    \begin{equation} \label{croatia}
        \Hmor\, -\, \Imor\, =\, \mu \left(\, \jail\, -\, \hourglass\, \right)
        \quad \text{and} \quad
        \Hmor\, +\, \Imor\, -2\, \dotcross\, =\, \mu \left(\, \jail\, +\, \hourglass\, -2\, \crossmor\, \right)
    \end{equation}
    hold in $\Tcat/\cI$.  From the first equation in \cref{croatia}, we have
    \[
        \begin{tikzpicture}[centerzero]
            \draw (-0.3,-0.2) -- (-0.1,0);
            \draw (0.3,-0.2) -- (0.1,0);
            \draw[thick,densely dotted] (-0.3,0.2) -- (-0.1,0) -- (0.1,0) -- (0.3,0.2);
        \end{tikzpicture}
        \, =\, \Imor\, + \mu \left(\, \jail\, -\, \hourglass\, \right),
    \]
    which allows us to reduce the length of any cycle of length at least three, or break open the cycle.  (The part of a cycle that would be replaced is indicated by dotted lines.)  Thus, $(\Tcat/\cI)(\go^{\otimes 2}, \go^{\otimes 2})$ is spanned by acyclic diagrams.  The second relation in \cref{croatia} then allows us to eliminate $\dotcross$.  This implies that $\jail\,$, $\hourglass\,$, $\Hmor$, and $\Imor$ span $(\Tcat/\cI)(\go^{\otimes 2}, \go^{\otimes 2})$, since these are the only planar acyclic trivalent graphs connecting four endpoints.  This contradicts our hypothesis \cref{funf}.

    We now know that $\cI(\go^{\otimes 2}, \go^{\otimes 2}) \subseteq U_1 \oplus U_2$.  So $\cI$ contains at least the span of the vectors
    \[
        \lambda \left(\, \jail\, +\, \hourglass\, +\, \crossmor\, \right)
        + \mu \left( \Hmor + \Imor + \dotcross\, \right)
    \]
    for some $\lambda,\mu \in \kk$, not both zero.  If $\mu = 0$, then we have
    \[
        \jail\, +\, \hourglass\, +\, \crossmor\, = 0
        \quad \text{in } \Tcat/\cI.
    \]
    Composing on the bottom with $\cupmor$ yields $(\delta+2)\, \cupmor = 0$.  If $\cupmor=0$, then $1_\go = 0$ by the first relation in \cref{vortex}, and so $\Tcat/\cI$ is the trivial category.  On the other hand, if $\delta=-2$ then $\Tcat/\cI$ is again trivial, as noted at the end of \cref{sec:defin}.  This contradicts our hypothesis \cref{funf}.

    We may thus assume $\mu \ne 0$.  Hence a relation of the form
    \[
        \Hmor + \Imor + \dotcross
        = \lambda
        \left(\, \jail + \hourglass + \crossmor\, \right)
    \]
    holds in $\Tcat/\cI$.  Now, composing on the bottom with $\cupmor$ and using \cref{chess}, we have
    \[
        2 \alpha\, \cupmor\, = \lambda (\delta+2)\, \cupmor,
    \]
    which implies that $\lambda = \frac{2\alpha}{\delta+2}$.  (As explained above, we cannot have $\cupmor\, =0$.)  Therefore, \cref{magic} holds in $\Tcat/\cI$.

    It remains to prove that the morphisms \cref{bigfive} give a basis for $(\Tcat/\cI)(\go^{\otimes 2}, \go^{\otimes 2})$.  In light of our assumption \cref{funf}, it suffices to show that the morphisms \cref{bigfive} are linearly independent.  In fact, this already follows from the above discussion.  As we saw above, no nonzero element of $U_3 \oplus U_4$ can be zero in $\Tcat/\cI$, and the space $U_1 \oplus U_2$ has dimension one in $\Tcat/\cI$.  Therefore, in $\Tcat/\cI$, the span of the morphisms \cref{bigfive} is $5$-dimensional, as required.
\end{proof}

Note that, at this point, we have not yet proved that a tensor ideal $\cI$ satisfying \cref{funf} exists.  However, it will follow from \cref{SW} that such an ideal does exist for $\alpha=7/3$ and $\delta=26$.

\begin{rem} \label{zagreb}
    Composing the relations in \cref{croatia} on the bottom with $\cupmor$ shows that $(\delta-1)\lambda = \alpha$.  (We assume here that $\cupmor\, \ne 0$, since, as we saw in the proof of \cref{SUP}, $\cupmor=0$ would imply that $\Tcat/\cI$ is trivial.)  Thus, there is a category $\mathcal{D}$ obtained from $\Tcat$ by imposing the additional relations
    \begin{equation} \label{bosnia}
        \Hmor\, -\, \Imor\, =\, \frac{\alpha}{\delta-1} \left(\, \jail\, -\, \hourglass\, \right)
        \quad \text{and} \quad
        \Hmor\, +\, \Imor\, -2\, \dotcross\, =\, \frac{\alpha}{\delta-1} \left(\, \jail\, +\, \hourglass\, -2\, \crossmor\, \right).
    \end{equation}
    (We have assumed here that $\delta \ne 1$, since $\delta=1$ leads to a rather uninteresting category where $\go^{\otimes 2} \cong \one$.)

    Other than the trivial category and the category with $\go^{\otimes 2} \cong \one$, the above argument shows that there are precisely \emph{two} quotients of $\Tcat$ whose morphisms spaces $\go^{\otimes 2} \to \go^{\otimes 2}$ have dimension less than or equal to $5$.  Whereas the goal of the current paper is to examine the category $\Fcat$ and relate it to the representation theory of the Lie algebra of type $F_4$, the authors do not know of the significance of the category $\mathcal{D}$ in representation theory.  We feel this category merits further investigation.
\end{rem}

\begin{lem} \label{spinner}
    If $\cI$ is a tensor ideal of $\Tcat$ such that \cref{magic} holds in $\Tcat/\cI$, then the relation
    \begin{equation} \label{triangle}
        \trimor = \frac{\alpha(2-\delta)}{2(\delta+2)}\, \mergemor
    \end{equation}
    holds in $\Tcat/\cI$.  In particular, \cref{triangle} holds in $\Fcat$.
\end{lem}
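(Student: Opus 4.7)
The strategy is to deduce \cref{triangle} from \cref{magic} (which holds in $\Tcat/\cI$ by hypothesis) by post-composing both sides of \cref{magic} with the trivalent vertex $\mergemor$, producing an identity in $(\Tcat/\cI)(\go^{\otimes 2},\go)$ that rearranges directly to the desired formula for $\trimor$.

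The main computation is to evaluate $\mergemor \circ f$ for each of the six diagrams $f$ appearing in \cref{magic}. Topologically, placing $\mergemor$ on top of $\Hmor$ attaches a third trivalent vertex that closes its two free endpoints, yielding the triangle $\trimor$. The same outcome holds for $\mergemor \circ \dotcross$: using the isotopy relations of \cref{windy}, $\dotcross$ can be written as $\Hmor$ post-composed with a crossing, and $\mergemor \circ \crossmor = \mergemor$ by the commutativity relation (the second equation in \cref{chess}). Finally, $\mergemor \circ \Imor = \alpha\,\mergemor$, since $\Imor = \splitmor \circ \mergemor$ and $\mergemor \circ \splitmor = \alpha\,1_\go$ by the bigon relation in \cref{chess}.

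On the right-hand side of \cref{magic}, $\mergemor \circ \jail$ and $\mergemor \circ \crossmor$ both reduce to $\mergemor$, so the only remaining input is $\mergemor \circ \hourglass = (\mergemor \circ \cupmor) \circ \capmor$. This term vanishes provided the upward lollipop $\mergemor \circ \cupmor \in \Tcat(\one,\go)$ is zero; the downward lollipop $\lollydrop = \capmor \circ \splitmor$ equals $0$ by the last relation in \cref{chess}, and a $180^\circ$ rotation via the strict pivotal structure established in \cref{windy} transfers the vanishing to the upward version. Establishing this rotation is the only subtle step of the argument; the remaining ingredients are direct substitutions.

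Assembling the two sides, post-composing \cref{magic} with $\mergemor$ produces
\[
    2\,\trimor + \alpha\,\mergemor \;=\; \frac{2\alpha}{\delta+2}(2\,\mergemor),
\]
and solving for $\trimor$ gives $\trimor = \frac{\alpha(2-\delta)}{2(\delta+2)}\,\mergemor$, as required (the division by $2$ is legal since $\operatorname{char}\kk = 0$). The final claim, that \cref{triangle} also holds in $\Fcat$, follows immediately because \cref{magic} is one of the defining relations of $\Fcat$ (see \cref{Fdef}), so the hypothesis of the lemma is met trivially in $\Fcat$ (with $\cI = 0$).
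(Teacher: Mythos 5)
Your proposal is correct and takes the same approach as the paper: post-compose \cref{magic} with $\mergemor$, evaluate each term using \cref{chess} and \cref{turvy}, and solve for $\trimor$.  One small slip: the paper's convention is $\lollydrop = \mergemor \circ \cupmor \in \Tcat(\one,\go)$ (the loop at the \emph{bottom}), not $\capmor \circ \splitmor$, so $\mergemor \circ \hourglass = \lollydrop \circ \capmor$ vanishes immediately from the last relation in \cref{chess} and the pivotal rotation you describe is not needed.
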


\begin{proof}
    Relation \cref{triangle} follows by composing \cref{magic} on the top with $\mergemor$, then using \cref{chess,turvy}.
\end{proof}

The first two relations in \cref{venom} imply that we have an algebra homomorphism
\begin{equation}
    \kk \fS_n \to \Tcat(\go^{\otimes n}, \go^{\otimes n}),
\end{equation}
sending the simple transposition $s_i \in \fS_n$ to the crossing of the $i$-th and $(i+1)$-st strands.  We will denote the image of the complete symmetrizers and antisymmetrizers under this homomorphism by white and black rectangles, respectively:
\begin{equation} \label{boxes}
    \begin{tikzpicture}[centerzero]
        \draw (-0.5,-0.6) -- (-0.5,0.6);
        \draw (0.5,-0.6) -- (0.5,0.6);
        \node at (0,0.4) {$\cdots$};
        \node at (0,-0.4) {$\cdots$};
        \symbox{-0.7,-0.15}{0.7,0.15};
    \end{tikzpicture}
    = \frac{1}{n!} \sum_{\sigma \in \fS_n}
    \begin{tikzpicture}[centerzero]
        \draw (-0.5,-0.6) -- (-0.5,0.6);
        \draw (0.5,-0.6) -- (0.5,0.6);
        \node at (0,0.4) {$\cdots$};
        \node at (0,-0.4) {$\cdots$};
        \filldraw[rounded corners, fill=white, draw=black] (-0.7,-0.15) rectangle (0.7,0.15);
        \node at (0,0) {$\sigma$};
    \end{tikzpicture}
    \ ,\qquad
    \begin{tikzpicture}[centerzero]
        \draw (-0.5,-0.6) -- (-0.5,0.6);
        \draw (0.5,-0.6) -- (0.5,0.6);
        \node at (0,0.4) {$\cdots$};
        \node at (0,-0.4) {$\cdots$};
        \antbox{-0.7,-0.15}{0.7,0.15};
    \end{tikzpicture}
    = \frac{1}{n!} \sum_{\sigma \in \fS_n} (-1)^{\ell(\sigma)}
    \begin{tikzpicture}[centerzero]
        \draw (-0.5,-0.6) -- (-0.5,0.6);
        \draw (0.5,-0.6) -- (0.5,0.6);
        \node at (0,0.4) {$\cdots$};
        \node at (0,-0.4) {$\cdots$};
        \filldraw[rounded corners, fill=white, draw=black] (-0.7,-0.15) rectangle (0.7,0.15);
        \node at (0,0) {$\sigma$};
    \end{tikzpicture}
    \ ,
\end{equation}
where the diagrams contain $n$ strings, $\fS_n$ is the symmetric group on $n$ letters, and $\ell(\sigma)$ is the length of the permutation $\sigma$ (i.e.\ the number of simple transpositions appearing in a reduced expression of $\sigma$).  It then follows from \cref{venom,chess} that
\begin{equation} \label{pomegranate}
    \begin{tikzpicture}[centerzero]
        \draw (-0.15,-0.35) -- (-0.15,0.2) arc(180:0:0.15) -- (0.15,-0.35);
        \symbox{-0.25,-0.1}{0.25,0.1};
    \end{tikzpicture}
    = \capmor\, ,\quad
    \begin{tikzpicture}[centerzero]
        \draw (-0.15,-0.35) -- (-0.15,0.2) arc(180:0:0.15) -- (0.15,-0.35);
        \antbox{-0.25,-0.1}{0.25,0.1};
    \end{tikzpicture}
    = 0,\quad
    \begin{tikzpicture}[centerzero]
        \draw (-0.15,-0.35) -- (-0.15,0);
        \draw (0.15,-0.35) -- (0.15,0);
        \symbox{-0.25,-0.1}{0.25,0.1};
        \draw (-0.15,0.1) -- (0.15,0.35);
        \draw (0.15,0.1) -- (-0.15,0.35);
    \end{tikzpicture}
    =
    \begin{tikzpicture}[centerzero]
        \draw (-0.15,-0.35) -- (-0.15,0.35);
        \draw (0.15,-0.35) -- (0.15,0.35);
        \symbox{-0.25,-0.1}{0.25,0.1};
    \end{tikzpicture}
    \, ,\quad
    \begin{tikzpicture}[centerzero]
        \draw (-0.15,-0.35) -- (-0.15,0);
        \draw (0.15,-0.35) -- (0.15,0);
        \antbox{-0.25,-0.1}{0.25,0.1};
        \draw (-0.15,0.1) -- (0.15,0.35);
        \draw (0.15,0.1) -- (-0.15,0.35);
    \end{tikzpicture}
    = -\,
    \begin{tikzpicture}[centerzero]
        \draw (-0.15,-0.35) -- (-0.15,0.35);
        \draw (0.15,-0.35) -- (0.15,0.35);
        \antbox{-0.25,-0.1}{0.25,0.1};
    \end{tikzpicture}
    \, ,\quad
    \begin{tikzpicture}[centerzero]
        \draw (-0.15,-0.35) -- (-0.15,0);
        \draw (0.15,-0.35) -- (0.15,0);
        \symbox{-0.25,-0.1}{0.25,0.1};
        \draw (-0.15,0.1) -- (0,0.25) -- (0,0.4);
        \draw (0.15,0.1) -- (0,0.25);
    \end{tikzpicture}
    = \mergemor\, ,\quad
    \begin{tikzpicture}[centerzero]
        \draw (-0.15,-0.35) -- (-0.15,0);
        \draw (0.15,-0.35) -- (0.15,0);
        \antbox{-0.25,-0.1}{0.25,0.1};
        \draw (-0.15,0.1) -- (0,0.25) -- (0,0.4);
        \draw (0.15,0.1) -- (0,0.25);
    \end{tikzpicture}
    = 0.
\end{equation}
It also follows from \cref{turvy} that
\begin{equation} \label{ladderslip}
    \begin{tikzpicture}[anchorbase]
        \draw (-0.2,-0.4) -- (-0.2,0.6);
        \draw (0.2,-0.4) -- (0.2,0.6);
        \draw (-0.2,0.35) -- (0.2,0.35);
        \symbox{-0.3,-0.1}{0.3,0.1};
    \end{tikzpicture}
    =
    \begin{tikzpicture}[anchorbase]
        \draw (-0.2,0.4) -- (-0.2,-0.6);
        \draw (0.2,0.4) -- (0.2,-0.6);
        \draw (-0.2,-0.35) -- (0.2,-0.35);
        \symbox{-0.3,-0.1}{0.3,0.1};
    \end{tikzpicture}
    \ ,\qquad
    \begin{tikzpicture}[anchorbase]
        \draw (-0.2,-0.4) -- (-0.2,0.6);
        \draw (0.2,-0.4) -- (0.2,0.6);
        \draw (-0.2,0.35) -- (0.2,0.35);
        \antbox{-0.3,-0.1}{0.3,0.1};
    \end{tikzpicture}
    =
    \begin{tikzpicture}[anchorbase]
        \draw (-0.2,0.4) -- (-0.2,-0.6);
        \draw (0.2,0.4) -- (0.2,-0.6);
        \draw (-0.2,-0.35) -- (0.2,-0.35);
        \antbox{-0.3,-0.1}{0.3,0.1};
    \end{tikzpicture}
    \ .
\end{equation}

\begin{lem} \label{sqexplode}
    If $\cI$ is a tensor ideal of $\Tcat$ satisfying \cref{funf}, then the relation \cref{sqburst} holds in $\Tcat/\cI$.
\end{lem}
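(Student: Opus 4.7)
My plan is to use the fact that, by \cref{SUP}, the space $(\Tcat/\cI)(\go^{\otimes 2},\go^{\otimes 2})$ is $5$-dimensional with basis $\jail$, $\hourglass$, $\crossmor$, $\Hmor$, $\Imor$, so I can write
\[
    \sqmor = A\jail + B\hourglass + C\crossmor + D\Hmor + E\Imor
\]
for unknown scalars $A,B,C,D,E \in \kk$.  The square has an obvious quarter-turn rotational symmetry, so $\Rot(\sqmor) = \sqmor$; combined with the action of $\Rot$ on the basis from \cref{rotary} (which swaps $\jail \leftrightarrow \hourglass$ and $\Hmor \leftrightarrow \Imor$ and fixes $\crossmor$), this forces $A = B$ and $D = E$, leaving three unknowns.

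The key observation is the diagrammatic identity $\sqmor = \Hmor \circ \Hmor$: stacking two H-shaped diagrams vertically yields the same $4$-cycle of trivalent vertices with external legs at the four corners as $\sqmor$.  Combined with $\Hmor \circ \crossmor = \dotcross$ (which is $\Switch(\Hmor) = \dotcross$ from \cref{flick}), associativity of composition gives
\[
    \sqmor \circ \crossmor
    = \Hmor \circ (\Hmor \circ \crossmor)
    = \Hmor \circ \dotcross,
\]
and I would expand both sides in the basis.  Doing so requires all the binary products in the endomorphism algebra $\End_{\Tcat/\cI}(\go^{\otimes 2})$ other than $\Hmor \circ \Hmor = \sqmor$ itself, and these follow routinely from the relations in \cref{Tdef}: $\crossmor \circ \crossmor = \jail$, $\hourglass \circ \hourglass = \delta\hourglass$, $\hourglass \circ \crossmor = \hourglass$, $\crossmor \circ \Imor = \Imor$, and $\hourglass \circ \Imor = 0$; the bubble identity $\mergemor \circ \splitmor = \alpha \, 1_\go$ gives $\Imor \circ \Imor = \alpha\Imor$ and $\Hmor \circ \hourglass = \hourglass \circ \Hmor = \alpha\hourglass$; and the triangle relation \cref{triangle} gives $\Hmor \circ \Imor = \frac{\alpha(2-\delta)}{2(\delta+2)}\Imor$.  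Finally, $\dotcross$ itself is rewritten in the basis via \cref{magic}, which holds in $\Tcat/\cI$ by \cref{SUP}.

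Equating the coefficients of $\jail$, $\hourglass$, $\crossmor$, $\Hmor$, $\Imor$ on the two sides of $\sqmor \circ \crossmor = \Hmor \circ \dotcross$ produces a linear system in the three unknowns $A, C, D$.  The $\Hmor$ equation is tautological; the $\Imor$ equation isolates $D = \frac{\alpha(\delta-6)}{2(\delta+2)}$; the $\hourglass$ equation then yields $A = \frac{\alpha^2(\delta+14)}{2(\delta+2)^2}$; and the $\crossmor$ equation gives $C = \frac{3\alpha^2(2-\delta)}{2(\delta+2)^2}$, which together reproduce \cref{sqburst} exactly, while the $\jail$ equation serves as a consistency check.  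The main obstacle will be the bookkeeping: after substituting $\dotcross$ via \cref{magic}, both $\Hmor \circ \dotcross$ and $\sqmor \circ \crossmor$ expand into a number of terms that must be collected carefully.  Conceptually, however, everything flows from the single factorization $\sqmor = \Hmor \circ \Hmor$.
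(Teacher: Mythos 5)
Your proposal is correct, and the factorization $\sqmor = \Hmor \circ \Hmor$ is a genuine diagrammatic identity (the two $\Hmor$'s stacked give a ladder whose four trivalent vertices and four internal edges form a $4$-cycle with one external leg at each vertex, which is ambient isotopic to $\sqmor$ by strict pivotality). Combined with $\Switch(\Hmor) = \dotcross$, this yields the identity $\sqmor \circ \crossmor = \Hmor \circ \dotcross$; expanding both sides in the basis \cref{bigfive} (using \cref{magic} to eliminate $\dotcross$, and the products you list, all of which follow from \cref{chess}, \cref{turvy}, and \cref{triangle}) does indeed produce a solvable linear system whose unique solution reproduces the coefficients in \cref{sqburst}. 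I verified the bookkeeping: the $\Imor$-coefficient gives $D = \frac{\alpha(\delta-6)}{2(\delta+2)}$, the $\hourglass$-coefficient then gives $A = \frac{\alpha^2(\delta+14)}{2(\delta+2)^2}$, and the $\jail$- and $\crossmor$-coefficients give \emph{identical} equations (both reduce to $A + C = \frac{4\alpha^2}{(\delta+2)^2} - \frac{2\alpha D}{\delta+2}$), yielding $C = \frac{3\alpha^2(2-\delta)}{2(\delta+2)^2}$; so it is slightly more accurate to say that the $\jail$ equation is redundant rather than an independent consistency check. Your route differs from the paper's: the paper attaches a two-strand symmetrizer to the bottom of \cref{magic} (killing the $\dotcross$ term) and then composes with $\Hmor$ on top, comparing the resulting expansion of the ``symmetrized square'' with the one obtained directly from the basis expansion of $\sqmor$. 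Both methods ultimately exploit the same geometric fact that $\sqmor$ is obtained by composing two $\Hmor$'s, and both need \cref{triangle}, but yours states the factorization $\sqmor = \Hmor \circ \Hmor$ explicitly and avoids symmetrizers entirely, which is arguably more transparent; the paper's symmetrizer trick, on the other hand, streamlines the elimination of $\dotcross$ in a way that is reused in the proof of \cref{pentexplode}.
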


\begin{proof}
    By \cref{SUP,spinner}, the morphisms \cref{bigfive} give a basis for $(\Tcat/\cI)(\go^{\otimes 2},\go^{\otimes 2})$ and \cref{triangle} holds.  Since $\sqmor$ is invariant under $\Rot$, we must have a relation in $\Tcat/\cI$ of the form
    \begin{equation} \label{sqbreak1}
        \sqmor =
        \beta_1 \left(\, \jail + \hourglass \, \right)
        + \beta_2 \left(\, \Hmor + \Imor\, \right)
        + \beta_3\, \crossmor\ ,\qquad \beta_1,\beta_2,\beta_3 \in \kk.
    \end{equation}
    Attaching a symmetrizer to the bottom of the diagrams in \cref{magic} and using \cref{pomegranate} gives
    \begin{equation} \label{sqbreak2}
        2\
        \begin{tikzpicture}[anchorbase]
            \draw (-0.2,-0.4) -- (-0.2,0.6);
            \draw (0.2,-0.4) -- (0.2,0.6);
            \draw (-0.2,0.35) -- (0.2,0.35);
            \symbox{-0.3,-0.1}{0.3,0.1};
        \end{tikzpicture}
        + \Imor
        =
        \frac{2 \alpha}{\delta+2}
        \left(\,
            \hourglass + 2\
            \begin{tikzpicture}[centerzero]
                \draw (-0.15,-0.35) -- (-0.15,0.35);
                \draw (0.15,-0.35) -- (0.15,0.35);
                \symbox{-0.25,-0.1}{0.25,0.1};
            \end{tikzpicture}\,
        \right).
    \end{equation}
    Attaching a $\Hmor$ to the top of the diagrams in \cref{sqbreak2} and using \cref{chess,triangle} gives
    \begin{equation} \label{sqbreak3}
        \begin{tikzpicture}[anchorbase]
            \draw (-0.2,-0.4) -- (-0.2,0.6);
            \draw (0.2,-0.4) -- (0.2,0.6);
            \draw (-0.2,0.2) -- (0.2,0.2);
            \draw (-0.2,0.4) -- (0.2,0.4);
            \symbox{-0.3,-0.2}{0.3,0};
        \end{tikzpicture}
        =
        \frac{\alpha}{\delta+2}
        \left(\,
            \alpha\, \hourglass + 2\
            \begin{tikzpicture}[anchorbase]
                \draw (-0.2,-0.4) -- (-0.2,0.6);
                \draw (0.2,-0.4) -- (0.2,0.6);
                \draw (-0.2,0.35) -- (0.2,0.35);
                \symbox{-0.3,-0.1}{0.3,0.1};
            \end{tikzpicture}\,
        \right)
        + \frac{\alpha(\delta-2)}{4(\delta+2)}\, \Imor
        \overset{\cref{sqbreak2}}{=}
        \frac{4 \alpha^2}{(\delta+2)^2}\,
        \begin{tikzpicture}[centerzero]
            \draw (-0.15,-0.35) -- (-0.15,0.35);
            \draw (0.15,-0.35) -- (0.15,0.35);
            \symbox{-0.25,-0.1}{0.25,0.1};
        \end{tikzpicture}
        \, + \frac{(\delta+4) \alpha^2}{(\delta+2)^2}\, \hourglass\,
        + \frac{(\delta-6)\alpha}{4(\delta+2)}\, \Imor.
    \end{equation}
    On the other hand, attaching a symmetrizer to the bottom of \cref{sqbreak1} gives
    \begin{equation} \label{sqbreak4}
        \begin{aligned}
            \begin{tikzpicture}[anchorbase]
                \draw (-0.2,-0.4) -- (-0.2,0.6);
                \draw (0.2,-0.4) -- (0.2,0.6);
                \draw (-0.2,0.2) -- (0.2,0.2);
                \draw (-0.2,0.4) -- (0.2,0.4);
                \symbox{-0.3,-0.2}{0.3,0};
            \end{tikzpicture}
            &=
            \beta_1
            \left(\,
                \begin{tikzpicture}[centerzero]
                    \draw (-0.15,-0.35) -- (-0.15,0.35);
                    \draw (0.15,-0.35) -- (0.15,0.35);
                    \symbox{-0.25,-0.1}{0.25,0.1};
                \end{tikzpicture}\,
                + \hourglass
            \, \right)
            + \beta_2
            \left(\,
                \begin{tikzpicture}[anchorbase]
                    \draw (-0.2,-0.4) -- (-0.2,0.6);
                    \draw (0.2,-0.4) -- (0.2,0.6);
                    \draw (-0.2,0.35) -- (0.2,0.35);
                    \symbox{-0.3,-0.1}{0.3,0.1};
                \end{tikzpicture}\,
                + \Imor
            \, \right)
            + \beta_3\,
            \begin{tikzpicture}[centerzero]
                \draw (-0.15,-0.35) -- (-0.15,0.35);
                \draw (0.15,-0.35) -- (0.15,0.35);
                \symbox{-0.25,-0.1}{0.25,0.1};
            \end{tikzpicture}
            \\
            &\overset{\mathclap{\cref{sqbreak2}}}{=}\
            \left(
                \beta_1 + \beta_3 + \frac{2 \alpha \beta_2}{\delta+2}
            \right)\,
            \begin{tikzpicture}[centerzero]
                \draw (-0.15,-0.35) -- (-0.15,0.35);
                \draw (0.15,-0.35) -- (0.15,0.35);
                \symbox{-0.25,-0.1}{0.25,0.1};
            \end{tikzpicture}
            \, + \left( \beta_1 + \frac{\alpha \beta_2}{\delta+2} \right) \hourglass
            \, + \frac{\beta_2}{2}\, \Imor.
        \end{aligned}
    \end{equation}
    Since the morphisms \cref{bigfive} are linearly independent, comparing \cref{sqbreak3,sqbreak4} gives
    \[
        \frac{4 \alpha^2}{(\delta+2)^2} = \beta_1 + \beta_3 + \frac{2 \alpha}{\delta+2} \beta_2,\qquad
        \frac{(\delta+4)\alpha^2}{(\delta+2)^2} = \beta_1 + \frac{\alpha}{\delta+2} \beta_2,\qquad
        \frac{(\delta-6)\alpha}{4(\delta+2)} = \frac{1}{2}\beta_2.
    \]
    Solving this linear system for $\beta_1$, $\beta_2$, and $\beta_3$ gives the coefficients of relation \cref{sqburst}.
\end{proof}

\begin{lem}
    We have
    \begin{equation} \label{3spike}
        \begin{tikzpicture}[anchorbase]
            \draw (-0.2,0) -- (0,0.25) -- (0.2,0);
            \draw (0,0.25) -- (0,0.4);
            \draw (0.2,-0.25) -- (-0.2,0) -- (-0.3,0.4);
            \draw (-0.2,-0.25) -- (0.2,0) -- (0.3,0.4);
        \end{tikzpicture}
        =\!
        \begin{tikzpicture}[anchorbase]
            \draw (-0.3,0.3) -- (0,0) -- (0.3,0.3);
            \draw (0,0.3) -- (-0.15,0.15);
            \draw (0,0) -- (0,-0.15) -- (-0.15,-0.3);
            \draw (0,-0.15) -- (0.15,-0.3);
        \end{tikzpicture}
        \!\! +\!\!
        \begin{tikzpicture}[anchorbase]
            \draw (0.3,0.3) -- (0,0) -- (-0.3,0.3);
            \draw (0,0.3) -- (0.15,0.15);
            \draw (0,0) -- (0,-0.15) -- (0.15,-0.3);
            \draw (0,-0.15) -- (-0.15,-0.3);
        \end{tikzpicture}
        \!\! -\!
        \begin{tikzpicture}[anchorbase]
            \draw (-0.2,0) -- (0,0.25) -- (0.2,0);
            \draw (0,0.25) -- (0,0.4);
            \draw (-0.2,-0.25) -- (-0.2,0) -- (-0.3,0.4);
            \draw (0.2,-0.25) -- (0.2,0) -- (0.3,0.4);
        \end{tikzpicture}
        + \frac{\alpha}{\delta+2}
        \left(
            \begin{tikzpicture}[centerzero]
                \draw (-0.3,0.3) -- (-0.3,0.23) arc(180:360:0.15) -- (0,0.3);
                \draw (0.3,0.3) -- (0.15,0) -- (-0.2,-0.3);
                \draw (0.2,-0.3) -- (0.15,0);
            \end{tikzpicture}
            \! +\!
            \begin{tikzpicture}[centerzero]
                \draw (0.3,0.3) -- (0.3,0.23) arc(360:180:0.15) -- (0,0.3);
                \draw (-0.3,0.3) -- (-0.15,0) -- (0.2,-0.3);
                \draw (-0.2,-0.3) -- (-0.15,0);
            \end{tikzpicture}
            \! -
            \begin{tikzpicture}[centerzero]
                \draw (-0.2,-0.3) -- (-0.2,0.3);
                \draw (0,0.3) -- (0.15,0) -- (0.3,0.3);
                \draw (0.15,0) -- (0.15,-0.3);
            \end{tikzpicture}
            \! -\!
            \begin{tikzpicture}[centerzero]
                \draw (0.2,-0.3) -- (0.2,0.3);
                \draw (0,0.3) -- (-0.15,0) -- (-0.3,0.3);
                \draw (-0.15,0) -- (-0.15,-0.3);
            \end{tikzpicture}
            -\!
            \begin{tikzpicture}[centerzero]
                \draw (-0.15,-0.3) -- (-0.15,-0.23) arc(180:0:0.15) -- (0.15,-0.3);
                \draw (-0.3,0.3) -- (0,0.08) -- (0.3,0.3);
                \draw (0,0.3) -- (0,0.08);
            \end{tikzpicture}
            \! -\!
            \begin{tikzpicture}[centerzero]
                \draw (-0.3,0.3) -- (-0.15,0.15) -- (0,0.3);
                \draw (-0.15,0.15) -- (0.15,-0.3);
                \draw (0.3,0.3) -- (-0.15,-0.3);
            \end{tikzpicture}
            \! -\!
            \begin{tikzpicture}[centerzero]
                \draw (0.3,0.3) -- (0.15,0.15) -- (0,0.3);
                \draw (0.15,0.15) -- (-0.15,-0.3);
                \draw (-0.3,0.3) -- (0.15,-0.3);
            \end{tikzpicture}
            \! + 3
            \begin{tikzpicture}[centerzero]
                \draw (0,0.3) to[out=225,in=110] (-0.15,-0.3);
                \draw (0.3,0.3) -- (0,0) -- (-0.3,0.3);
                \draw (0,0) -- (0.15,-0.3);
            \end{tikzpicture}
            \! + 3
            \begin{tikzpicture}[centerzero]
                \draw (0,0.3) to[out=-45,in=70] (0.15,-0.3);
                \draw (-0.3,0.3) -- (0,0) -- (0.3,0.3);
                \draw (0,0) -- (-0.15,-0.3);
            \end{tikzpicture}
            \! - 3\,
            \begin{tikzpicture}[centerzero]
                \draw (0,0.3) -- (0,-0.15) -- (-0.15,-0.3);
                \draw (0,-0.15) -- (0.15,-0.3);
                \draw (-0.2,0.3) -- (-0.2,0.25) arc(180:360:0.2) -- (0.2,0.3);
            \end{tikzpicture}
        \right).
    \end{equation}
\end{lem}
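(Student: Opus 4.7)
The plan is to apply \cref{magic} to rewrite the crossing at the bottom of the LHS.  First observe that the LHS equals $T \circ \crossmor$, where $T := (1_\go \otimes \mergemor \otimes 1_\go) \circ (\splitmor \otimes \splitmor)$ is the tree-shaped morphism appearing as the third term on the RHS of \cref{3spike} (i.e., the diagram obtained from the LHS by uncrossing the two bottom strands).  Using \cref{magic} to substitute $\crossmor = \tfrac{\delta+2}{2\alpha}(\Hmor + \Imor + \dotcross) - \jail - \hourglass$ yields
\[
    \text{LHS} = \tfrac{\delta+2}{2\alpha}\bigl(T \circ \Hmor + T \circ \Imor + T \circ \dotcross\bigr) - T - T \circ \hourglass,
\]
so the task is reduced to computing each of $T \circ \Hmor$, $T \circ \Imor$, $T \circ \dotcross$, and $T \circ \hourglass$, and matching the combination against the RHS.

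The morphism $T \circ \hourglass$ factors through $\one$: its upper half $T \circ \cupmor$ joins the two inputs of $T$ by a cup, producing a $3$-cycle that collapses via \cref{triangle} to a scalar multiple of the three-pronged vertex.  This contributes precisely the $\capmor \otimes \explode$-type correction term with the expected scalar.  The morphism $T \circ \Imor$ (using $\Imor = \splitmor \circ \mergemor$) contains a $4$-cycle; expanding via \cref{sqburst} produces five summands which, after further triangle collapses and pivotal isotopies via \cref{topsy}, contribute to the two asymmetric tree terms together with several of the correction diagrams.  The remaining morphisms $T \circ \Hmor$ and $T \circ \dotcross$ are analyzed by using the slide relations of \cref{venom} to commute the crossings and horizontal strands past the trivalent vertices of $T$, and then applying \cref{triangle} to collapse any triangles that arise.

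The main obstacle is the scalar bookkeeping.  The coefficients in the final expression arise as products and sums of $\tfrac{\delta+2}{2\alpha}$ from \cref{magic}, $\tfrac{\alpha(2-\delta)}{2(\delta+2)}$ from \cref{triangle}, and the three coefficients $\tfrac{\alpha^2(\delta+14)}{2(\delta+2)^2}$, $\tfrac{\alpha(\delta-6)}{2(\delta+2)}$, $\tfrac{3\alpha^2(2-\delta)}{2(\delta+2)^2}$ from \cref{sqburst}.  These must conspire to give the clean overall factor $\tfrac{\alpha}{\delta+2}$ appearing in front of the ten correction terms on the RHS, and the integer ratios $\pm 1, \pm 3$ on those terms emerge from the combinatorics of how many copies of each diagram are produced by each of the four reductions.
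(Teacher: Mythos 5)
Your opening move, writing the left-hand side as $T \circ \crossmor$ and substituting for $\crossmor$ via \cref{magic}, is legitimate, but the plan breaks down at $T \circ \Hmor$. Tracing the internal edges shows that the five trivalent vertices of $T \circ \Hmor$ (three from $T$, two from $\Hmor$) form a single $5$-cycle, with exactly one external leg at each vertex; that is, $T \circ \Hmor$ \emph{is} the pentagon $\pentmor$ up to planar isotopy. This diagram has no crossings for \cref{venom} to slide and no triangles for \cref{triangle} to collapse, so the tools you list leave it untouched. The only relation in the paper that rewrites a pentagon in terms of acyclic diagrams is \cref{pentburst}, which is established afterwards in \cref{pentexplode} and whose proof uses \cref{3spike} itself --- so invoking it here would be circular. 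A similar problem occurs for $T \circ \dotcross$, which is also a $5$-cycle (a ``twisted'' pentagon); \cref{venom} can move the twist around but cannot shorten the cycle.

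The paper's proof avoids this entirely. Rather than expanding the whole crossing as an element of $\Tcat(\go^{\otimes 2},\go^{\otimes 2})$ composed with $T$, it first uses \cref{venom} to isotope the left-hand side so that the crossing is absorbed next to a trivalent vertex, producing a \emph{local} $\dotcross$ subdiagram sitting inside the larger picture. Applying \cref{magic} to that local $\dotcross$ replaces a four-legged subdiagram by a sum of acyclic four-legged subdiagrams and never lengthens any cycle, so after two such applications everything is acyclic. The rotational bookkeeping is then compressed into a single step by applying $1 - \Rot + \Rot^2 - \Rot^3 + \Rot^4$ and dividing by $2$. If you want to pursue your route, you would need to first prove a pentagon-reduction identity independently of \cref{3spike} --- but then you would essentially be reproving \cref{pentburst}, at which point \cref{3spike} becomes redundant.
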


\begin{proof}
    We have
    \begin{multline*}
        \begin{tikzpicture}[anchorbase]
            \draw (-0.2,0) -- (0,0.25) -- (0.2,0);
            \draw (0,0.25) -- (0,0.4);
            \draw (0.2,-0.25) -- (-0.2,0) -- (-0.3,0.4);
            \draw (-0.2,-0.25) -- (0.2,0) -- (0.3,0.4);
        \end{tikzpicture}
        \overset{\cref{venom}}{=}
        \begin{tikzpicture}[anchorbase]
            \draw (-0.2,0) -- (0,0.25) -- (0.2,0);
            \draw (0,0.25) -- (0,0.4);
            \draw (-0.2,0) -- (-0.3,0.4);
            \draw (-0.2,-0.25) -- (0.2,0) -- (0.3,0.4);
            \draw (-0.2,0) to[out=-45,in=180] (0.3,0.25) to[out=0,in=45] (0.2,-0.25);
        \end{tikzpicture}
        =
        \begin{tikzpicture}[anchorbase]
            \draw (-0.3,0.4) -- (-0.15,0.25) -- (0.3,-0.25);
            \draw (-0.15,0.25) -- (0,0.4);
            \draw (-0.15,0.25) -- (-0.2,0) -- (-0.3,-0.25);
            \draw (-0.2,0) -- (0.3,0.4);
            \opendot{-0.15,0.25};
        \end{tikzpicture}
        \overset{\cref{magic}}{=}
        \frac{2\alpha}{\delta+2}
        \left(
            \begin{tikzpicture}[centerzero]
                \draw (0,0.3) to[out=-45,in=70] (0.15,-0.3);
                \draw (-0.3,0.3) -- (0,0) -- (0.3,0.3);
                \draw (0,0) -- (-0.15,-0.3);
            \end{tikzpicture}
            +
            \begin{tikzpicture}[centerzero]
                \draw (-0.3,0.3) -- (-0.3,0.23) arc(180:360:0.15) -- (0,0.3);
                \draw (0.3,0.3) -- (0.15,0) -- (-0.2,-0.3);
                \draw (0.2,-0.3) -- (0.15,0);
            \end{tikzpicture}
            +
            \begin{tikzpicture}[centerzero]
                \draw (0.3,0.3) -- (0.15,0.15) -- (0,0.3);
                \draw (0.15,0.15) -- (-0.15,-0.3);
                \draw (-0.3,0.3) -- (0.15,-0.3);
            \end{tikzpicture}
        \right)
        -
        \begin{tikzpicture}[anchorbase]
            \draw (-0.3,0.4) -- (-0.2,0.2) -- (-0.05,0.2) -- (0,0.4);
            \draw (-0.2,0.2) -- (-0.2,-0.1) -- (-0.3,-0.25);
            \draw (-0.2,-0.1) -- (0.3,0.4);
            \draw (-0.05,0.2) -- (0.3,-0.25);
        \end{tikzpicture}
        -
        \begin{tikzpicture}[anchorbase]
            \draw (-0.3,0.4) -- (-0.1,0.2) -- (0,0.4);
            \draw (-0.1,0.2) -- (0,0) -- (0.3,0.4);
            \draw (-0.3,-0.25) -- (0,0) -- (0.3,-0.25);
            \opendot{0,0};
        \end{tikzpicture}
        \\
        \overset{\cref{magic}}{=}
        \frac{2\alpha}{\delta+2}
        \left(
            \begin{tikzpicture}[centerzero]
                \draw (0,0.3) to[out=-45,in=70] (0.15,-0.3);
                \draw (-0.3,0.3) -- (0,0) -- (0.3,0.3);
                \draw (0,0) -- (-0.15,-0.3);
            \end{tikzpicture}
            +
            \begin{tikzpicture}[centerzero]
                \draw (-0.3,0.3) -- (-0.3,0.23) arc(180:360:0.15) -- (0,0.3);
                \draw (0.3,0.3) -- (0.15,0) -- (-0.2,-0.3);
                \draw (0.2,-0.3) -- (0.15,0);
            \end{tikzpicture}
            +
            \begin{tikzpicture}[centerzero]
                \draw (0.3,0.3) -- (0.15,0.15) -- (0,0.3);
                \draw (0.15,0.15) -- (-0.15,-0.3);
                \draw (-0.3,0.3) -- (0.15,-0.3);
            \end{tikzpicture}
            -
            \begin{tikzpicture}[centerzero]
                \draw (0.2,-0.3) -- (0.2,0.3);
                \draw (0,0.3) -- (-0.15,0) -- (-0.3,0.3);
                \draw (-0.15,0) -- (-0.15,-0.3);
            \end{tikzpicture}
            -
            \begin{tikzpicture}[centerzero]
                \draw (-0.15,-0.3) -- (-0.15,-0.23) arc(180:0:0.15) -- (0.15,-0.3);
                \draw (-0.3,0.3) -- (0,0.08) -- (0.3,0.3);
                \draw (0,0.3) -- (0,0.08);
            \end{tikzpicture}
            -
            \begin{tikzpicture}[centerzero]
                \draw (-0.3,0.3) -- (-0.15,0.15) -- (0,0.3);
                \draw (-0.15,0.15) -- (0.15,-0.3);
                \draw (0.3,0.3) -- (-0.15,-0.3);
            \end{tikzpicture}
        \right)
        -
        \begin{tikzpicture}[anchorbase]
            \draw (-0.3,0.4) -- (-0.2,0.2) -- (-0.05,0.2) -- (0,0.4);
            \draw (-0.2,0.2) -- (-0.2,-0.1) -- (-0.3,-0.25);
            \draw (-0.2,-0.1) -- (0.3,0.4);
            \draw (-0.05,0.2) -- (0.3,-0.25);
        \end{tikzpicture}
        +
        \begin{tikzpicture}[anchorbase]
            \draw (0.3,-0.3) -- (-0.3,0.3);
            \draw (0.3,0.3) -- (0.15,-0.15);
            \draw (0,0.3) -- (-0.15,0.15);
            \draw (0,0) -- (-0.3,-0.3);
        \end{tikzpicture}
        +
        \begin{tikzpicture}[anchorbase]
            \draw (-0.3,0.3) -- (0,0) -- (0.3,0.3);
            \draw (0,0.3) -- (-0.15,0.15);
            \draw (0,0) -- (0,-0.15) -- (-0.15,-0.3);
            \draw (0,-0.15) -- (0.15,-0.3);
        \end{tikzpicture}
        .
    \end{multline*}
    Note that the third-to-last morphism appearing above is the rotation operator $\Rot$ applied to the first morphism.  Applying $1 - \Rot + \Rot^2 - \Rot^3 + \Rot^4$ to the above equation and dividing by $2$ then yields \cref{3spike}.
\end{proof}

\begin{lem} \label{pentexplode}
    If $\cI$ is a tensor ideal of $\Tcat$ satisfying \cref{cinco}, and $\delta \notin \{2,-6,10\}$, then the relation \cref{pentburst} holds in $\Tcat/\cI$.
\end{lem}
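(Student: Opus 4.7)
The plan is to mirror the blueprint of \cref{sqexplode}. Write $\cC = \Tcat/\cI$. By hypothesis \cref{cinco} together with the isomorphism \cref{twirl}, the space $\cC(\go^{\otimes 2}, \go^{\otimes 3})$ has dimension at most $15$. The pentagon $\pentmor$ is invariant under the rotation operator $\Rot$, since it exhibits the fivefold symmetry of a regular pentagon, while the fifteen diagrams on the right-hand side of \cref{pentburst} partition into three $\Rot$-orbits of size five, one per parenthesized group. Write $S_1, S_2, S_3 \in \cC(\go^{\otimes 2}, \go^{\otimes 3})$ for the three orbit sums; each is itself $\Rot$-invariant. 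The sixteen morphisms $\pentmor$ and the summands of $S_1, S_2, S_3$ thus live in a $\leq 15$-dimensional space, so a nontrivial linear dependence exists among them.

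The first step is to show that $\pentmor \in \Span_\kk\{S_1, S_2, S_3\}$. Starting from any such dependence and averaging over $\langle \Rot \rangle \cong \mathbb{Z}/5$ (valid because $\kk$ has characteristic zero) yields a $\Rot$-invariant dependence $c_0\pentmor + c_1 S_1 + c_2 S_2 + c_3 S_3 = 0$. One rules out $c_0 = 0$: any putative relation $c_1 S_1 + c_2 S_2 + c_3 S_3 = 0$ can be composed with $\capmor$ or $\mergemor$ on a pair of top outputs and then simplified using \cref{magic,sqburst,triangle,3spike} (and their $\Rot$-translates) to yield a relation in $\cC(\go^{\otimes 2}, \go^{\otimes 2})$ in the basis \cref{bigfive} supplied by \cref{SUP}; linear independence of that basis then forces $c_1 = c_2 = c_3 = 0$. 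Hence $\pentmor = \gamma_1 S_1 + \gamma_2 S_2 + \gamma_3 S_3$ for some $\gamma_1, \gamma_2, \gamma_3 \in \kk$.

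To pin down the $\gamma_i$, I would compose this ansatz with three independent test morphisms that reduce both sides into the five-dimensional space $\cC(\go^{\otimes 2}, \go^{\otimes 2})$; natural choices include capping off a pair of adjacent top outputs, capping a pair of nonadjacent top outputs, and attaching a $\mergemor$ at the top. After applying \cref{chess,topsy,turvy} for isotopies, \cref{magic} to resolve crossings created inside the pentagon, \cref{sqburst} to collapse internal square subdiagrams, \cref{triangle} to collapse triangles, and \cref{3spike} together with its $\Rot$-translates to handle triangle-plus-spike patterns, each composition gives a linear equation in $\gamma_1, \gamma_2, \gamma_3$ whose coefficients are rational functions in $\delta$. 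The restriction $\delta \notin \{2, -6, 10\}$ is needed precisely to ensure no vanishing denominators in these coefficients and that the resulting $3 \times 3$ system is nondegenerate; solving it then yields the scalars in \cref{pentburst}. The main obstacle is computational: unlike the square case, each composition of $\pentmor$ with a test morphism produces many intermediate diagrams that must be collapsed by applying \cref{sqburst} and \cref{3spike} to \emph{internal} subdiagrams, and careful $\Rot$-equivariant bookkeeping is required to see the off-orbit contributions cancel and the three orbit-sum coefficients emerge cleanly.
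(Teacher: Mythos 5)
Your overall architecture matches the paper's: first establish that a relation of the form $\pentmor = \gamma_1 S_1 + \gamma_2 S_2 + \gamma_3 S_3$ holds, then pin down the $\gamma_i$ by composing with a test morphism landing in $\cC(\go^{\otimes 2},\go^{\otimes 2})$ and using the basis \cref{bigfive} from \cref{SUP}.  The coefficient-computation stage is essentially identical in spirit (the paper uses a single test morphism---attaching $\mergemor$ to the rightmost two top outputs---and reads off three equations by comparing coefficients in \cref{bigfive}, rather than three separate tests, but either works).

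The gap is in the first stage. You pick \emph{a} nontrivial dependence among $\pentmor$ and the fifteen morphisms \cref{brutal}, average it over $\langle\Rot\rangle$, and assume the result is still a nontrivial relation. That is not automatic. The kernel of the evaluation map $\kk^{15}\to\cC(\go^{\otimes 2},\go^{\otimes 3})$ on the formal span of \cref{brutal} is a $\Z/5$-submodule, and over a field of characteristic zero it need not contain any trivial-eigenvalue vector: it could consist entirely of $\zeta$-eigenvectors for primitive fifth roots $\zeta$, in which case every averaged relation degenerates to $0=0$.  In that scenario $\pentmor$ could be linearly independent of the span of \cref{brutal} (dimension of that span being $\le 14$, total $\le 15$, consistent with \cref{cinco}), and then no relation of the desired form exists at all.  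Your subsequent ``rule out $c_0=0$'' step only addresses $\Rot$-invariant dependences $c_1 S_1 + c_2 S_2 + c_3 S_3 = 0$ and cannot touch this possibility.

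The paper avoids this pitfall by a genuine case analysis.  If \cref{brutal} is linearly independent, dimension forces $\pentmor$ into its span with automatically $\Rot$-invariant coefficients.  Otherwise, take a nontrivial dependence among \cref{brutal} itself.  If any of the first five diagrams appears with nonzero coefficient, compose the dependence on the \emph{bottom} with $\Hmor$: this \emph{injects} $\pentmor$ into the relation with a nonzero coefficient, and only \emph{then} does one average over $\Rot$, so the pentagon term survives with coefficient $5a_1\neq 0$.  If the dependence involves only the last ten diagrams, the paper normalizes it (using $\Switch$ and $\Rot$, then symmetrizers) to \cref{saidin}, and the subsequent test-morphism compositions force $\delta=-6$, which is excluded.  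So $\delta\ne -6$ is used here, in the \emph{existence} part of the argument, not (only) in the nondegeneracy of the final $3\times 3$ system as you suggest.  Your proof needs this inject-then-average step (or some substitute) to close the gap.
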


\begin{proof}
    We first show that we have a relation of the form
    \begin{equation} \label{pentbreak1}
        \begin{multlined}
            \pentmor =
            \gamma_1
            \left(
                \begin{tikzpicture}[anchorbase]
                    \draw (-0.2,0) -- (0,0.25) -- (0.2,0);
                    \draw (0,0.25) -- (0,0.4);
                    \draw (-0.2,-0.25) -- (-0.2,0) -- (-0.3,0.4);
                    \draw (0.2,-0.25) -- (0.2,0) -- (0.3,0.4);
                \end{tikzpicture}
                +
                \begin{tikzpicture}[anchorbase]
                    \draw (-0.3,0.3) -- (0,0) -- (0.3,0.3);
                    \draw (0,0.3) -- (-0.15,0.15);
                    \draw (0,0) -- (0,-0.15) -- (-0.15,-0.3);
                    \draw (0,-0.15) -- (0.15,-0.3);
                \end{tikzpicture}
                +
                \begin{tikzpicture}[anchorbase]
                    \draw (0.3,0.3) -- (0,0) -- (-0.3,0.3);
                    \draw (0,0.3) -- (0.15,0.15);
                    \draw (0,0) -- (0,-0.15) -- (0.15,-0.3);
                    \draw (0,-0.15) -- (-0.15,-0.3);
                \end{tikzpicture}
                +
                \begin{tikzpicture}[anchorbase]
                    \draw (-0.3,-0.3) -- (0.3,0.3);
                    \draw (-0.3,0.3) -- (-0.15,-0.15);
                    \draw (0,0.3) -- (0.15,0.15);
                    \draw (0,0) -- (0.3,-0.3);
                \end{tikzpicture}
                +
                \begin{tikzpicture}[anchorbase]
                    \draw (0.3,-0.3) -- (-0.3,0.3);
                    \draw (0.3,0.3) -- (0.15,-0.15);
                    \draw (0,0.3) -- (-0.15,0.15);
                    \draw (0,0) -- (-0.3,-0.3);
                \end{tikzpicture}
            \right)
            + \gamma_2
            \left(
                \begin{tikzpicture}[centerzero]
                    \draw (-0.15,-0.3) -- (-0.15,-0.23) arc(180:0:0.15) -- (0.15,-0.3);
                    \draw (-0.3,0.3) -- (0,0.08) -- (0.3,0.3);
                    \draw (0,0.3) -- (0,0.08);
                \end{tikzpicture}
                +
                \begin{tikzpicture}[centerzero]
                    \draw (-0.2,-0.3) -- (-0.2,0.3);
                    \draw (0,0.3) -- (0.15,0) -- (0.3,0.3);
                    \draw (0.15,0) -- (0.15,-0.3);
                \end{tikzpicture}
                +
                \begin{tikzpicture}[centerzero]
                    \draw (0.2,-0.3) -- (0.2,0.3);
                    \draw (0,0.3) -- (-0.15,0) -- (-0.3,0.3);
                    \draw (-0.15,0) -- (-0.15,-0.3);
                \end{tikzpicture}
                +
                \begin{tikzpicture}[centerzero]
                    \draw (-0.3,0.3) -- (-0.3,0.23) arc(180:360:0.15) -- (0,0.3);
                    \draw (0.3,0.3) -- (0.15,0) -- (-0.2,-0.3);
                    \draw (0.2,-0.3) -- (0.15,0);
                \end{tikzpicture}
                +
                \begin{tikzpicture}[centerzero]
                    \draw (0.3,0.3) -- (0.3,0.23) arc(360:180:0.15) -- (0,0.3);
                    \draw (-0.3,0.3) -- (-0.15,0) -- (0.2,-0.3);
                    \draw (-0.2,-0.3) -- (-0.15,0);
                \end{tikzpicture}
            \right)
            \\
            + \gamma_3
            \left(
                \begin{tikzpicture}[centerzero]
                    \draw (0,0.3) -- (0,-0.15) -- (-0.15,-0.3);
                    \draw (0,-0.15) -- (0.15,-0.3);
                    \draw (-0.2,0.3) -- (-0.2,0.25) arc(180:360:0.2) -- (0.2,0.3);
                \end{tikzpicture}
                +
                \begin{tikzpicture}[centerzero]
                    \draw (0,0.3) to[out=-45,in=70] (0.15,-0.3);
                    \draw (-0.3,0.3) -- (0,0) -- (0.3,0.3);
                    \draw (0,0) -- (-0.15,-0.3);
                \end{tikzpicture}
                +
                \begin{tikzpicture}[centerzero]
                    \draw (0,0.3) to[out=225,in=110] (-0.15,-0.3);
                    \draw (0.3,0.3) -- (0,0) -- (-0.3,0.3);
                    \draw (0,0) -- (0.15,-0.3);
                \end{tikzpicture}
                +
                \begin{tikzpicture}[centerzero]
                    \draw (-0.3,0.3) -- (-0.15,0.15) -- (0,0.3);
                    \draw (-0.15,0.15) -- (0.15,-0.3);
                    \draw (0.3,0.3) -- (-0.15,-0.3);
                \end{tikzpicture}
                +
                \begin{tikzpicture}[centerzero]
                    \draw (0.3,0.3) -- (0.15,0.15) -- (0,0.3);
                    \draw (0.15,0.15) -- (-0.15,-0.3);
                    \draw (-0.3,0.3) -- (0.15,-0.3);
                \end{tikzpicture}
            \right).
        \end{multlined}
    \end{equation}
    If the morphisms
    \begin{equation} \label{brutal}
        \begin{tikzpicture}[anchorbase]
            \draw (-0.2,0) -- (0,0.25) -- (0.2,0);
            \draw (0,0.25) -- (0,0.4);
            \draw (-0.2,-0.25) -- (-0.2,0) -- (-0.3,0.4);
            \draw (0.2,-0.25) -- (0.2,0) -- (0.3,0.4);
        \end{tikzpicture}
        \ ,\
        \begin{tikzpicture}[anchorbase]
            \draw (-0.3,0.3) -- (0,0) -- (0.3,0.3);
            \draw (0,0.3) -- (-0.15,0.15);
            \draw (0,0) -- (0,-0.15) -- (-0.15,-0.3);
            \draw (0,-0.15) -- (0.15,-0.3);
        \end{tikzpicture}
        \ ,\
        \begin{tikzpicture}[anchorbase]
            \draw (0.3,0.3) -- (0,0) -- (-0.3,0.3);
            \draw (0,0.3) -- (0.15,0.15);
            \draw (0,0) -- (0,-0.15) -- (0.15,-0.3);
            \draw (0,-0.15) -- (-0.15,-0.3);
        \end{tikzpicture}
        \ ,\
        \begin{tikzpicture}[anchorbase]
            \draw (-0.3,-0.3) -- (0.3,0.3);
            \draw (-0.3,0.3) -- (-0.15,-0.15);
            \draw (0,0.3) -- (0.15,0.15);
            \draw (0,0) -- (0.3,-0.3);
        \end{tikzpicture}
        \ ,\
        \begin{tikzpicture}[anchorbase]
            \draw (0.3,-0.3) -- (-0.3,0.3);
            \draw (0.3,0.3) -- (0.15,-0.15);
            \draw (0,0.3) -- (-0.15,0.15);
            \draw (0,0) -- (-0.3,-0.3);
        \end{tikzpicture}
        \ ,\
        \begin{tikzpicture}[centerzero]
            \draw (-0.15,-0.3) -- (-0.15,-0.23) arc(180:0:0.15) -- (0.15,-0.3);
            \draw (-0.3,0.3) -- (0,0.08) -- (0.3,0.3);
            \draw (0,0.3) -- (0,0.08);
        \end{tikzpicture}
        \ ,\
        \begin{tikzpicture}[centerzero]
            \draw (-0.2,-0.3) -- (-0.2,0.3);
            \draw (0,0.3) -- (0.15,0) -- (0.3,0.3);
            \draw (0.15,0) -- (0.15,-0.3);
        \end{tikzpicture}
        \ ,\
        \begin{tikzpicture}[centerzero]
            \draw (0.2,-0.3) -- (0.2,0.3);
            \draw (0,0.3) -- (-0.15,0) -- (-0.3,0.3);
            \draw (-0.15,0) -- (-0.15,-0.3);
        \end{tikzpicture}
        \ ,\
        \begin{tikzpicture}[centerzero]
            \draw (-0.3,0.3) -- (-0.3,0.23) arc(180:360:0.15) -- (0,0.3);
            \draw (0.3,0.3) -- (0.15,0) -- (-0.2,-0.3);
            \draw (0.2,-0.3) -- (0.15,0);
        \end{tikzpicture}
        \ ,\
        \begin{tikzpicture}[centerzero]
            \draw (0.3,0.3) -- (0.3,0.23) arc(360:180:0.15) -- (0,0.3);
            \draw (-0.3,0.3) -- (-0.15,0) -- (0.2,-0.3);
            \draw (-0.2,-0.3) -- (-0.15,0);
        \end{tikzpicture}
        \ ,\
        \begin{tikzpicture}[centerzero]
            \draw (0,0.3) -- (0,-0.15) -- (-0.15,-0.3);
            \draw (0,-0.15) -- (0.15,-0.3);
            \draw (-0.2,0.3) -- (-0.2,0.25) arc(180:360:0.2) -- (0.2,0.3);
        \end{tikzpicture}
        \ ,\
        \begin{tikzpicture}[centerzero]
            \draw (0,0.3) to[out=-45,in=70] (0.15,-0.3);
            \draw (-0.3,0.3) -- (0,0) -- (0.3,0.3);
            \draw (0,0) -- (-0.15,-0.3);
        \end{tikzpicture}
        \ ,\
        \begin{tikzpicture}[centerzero]
            \draw (0,0.3) to[out=225,in=110] (-0.15,-0.3);
            \draw (0.3,0.3) -- (0,0) -- (-0.3,0.3);
            \draw (0,0) -- (0.15,-0.3);
        \end{tikzpicture}
        \ ,\
        \begin{tikzpicture}[centerzero]
            \draw (-0.3,0.3) -- (-0.15,0.15) -- (0,0.3);
            \draw (-0.15,0.15) -- (0.15,-0.3);
            \draw (0.3,0.3) -- (-0.15,-0.3);
        \end{tikzpicture}
        \ ,\
        \begin{tikzpicture}[centerzero]
            \draw (0.3,0.3) -- (0.15,0.15) -- (0,0.3);
            \draw (0.15,0.15) -- (-0.15,-0.3);
            \draw (-0.3,0.3) -- (0.15,-0.3);
        \end{tikzpicture}
    \end{equation}
    are linearly independent, then this follows immediately from \cref{cinco} and the fact that the pentagon is invariant under rotation.  So we suppose there is a nontrivial linear dependence relation involving the morphisms \cref{brutal}.  If the coefficient of any of the first five morphisms is nonzero, we can rotate to assume that the coefficient of the first morphism is nonzero.  We then compose on the bottom with $\Hmor$ to obtain a pentagon from the first term.  Using \cref{chess,triangle,sqburst,3spike}, the other terms can be written as linear combinations of the morphisms in \cref{brutal}.  Summing over all rotations and solving for the pentagon then yields a relation of the form \cref{pentbreak1}.

    It remains to consider a nontrivial linear dependence relation involving the last 10 morphisms in \cref{brutal}.  Note that we can use rotation and crossings to turn any of these 10 morphisms into any other.  Thus, we may assume that the sixth morphism in \cref{brutal} occurs with coefficient $1$.  Then, applying symmetrizers to the bottom two strands and the top three strands, we obtain a relation of the form
    \begin{equation} \label{saidin}
        0 =
        \begin{tikzpicture}[centerzero]
            \draw (-0.15,-0.3) -- (-0.15,-0.23) arc(180:0:0.15) -- (0.15,-0.3);
            \draw (-0.3,0.3) -- (0,0.08) -- (0.3,0.3);
            \draw (0,0.3) -- (0,0.08);
        \end{tikzpicture}
        + \kappa_1
        \left(
            \begin{tikzpicture}[centerzero]
                \draw (-0.2,-0.3) -- (-0.2,0.3);
                \draw (0,0.3) -- (0.15,0) -- (0.3,0.3);
                \draw (0.15,0) -- (0.15,-0.3);
            \end{tikzpicture}
            +
            \begin{tikzpicture}[centerzero]
                \draw (0.2,-0.3) -- (0.2,0.3);
                \draw (0,0.3) -- (-0.15,0) -- (-0.3,0.3);
                \draw (-0.15,0) -- (-0.15,-0.3);
            \end{tikzpicture}
            +
            \begin{tikzpicture}[centerzero]
                \draw (0,0.3) to[out=-45,in=70] (0.15,-0.3);
                \draw (-0.3,0.3) -- (0,0) -- (0.3,0.3);
                \draw (0,0) -- (-0.15,-0.3);
            \end{tikzpicture}
            +
            \begin{tikzpicture}[centerzero]
                \draw (0,0.3) to[out=225,in=110] (-0.15,-0.3);
                \draw (0.3,0.3) -- (0,0) -- (-0.3,0.3);
                \draw (0,0) -- (0.15,-0.3);
            \end{tikzpicture}
            +
            \begin{tikzpicture}[centerzero]
                \draw (-0.3,0.3) -- (-0.15,0.15) -- (0,0.3);
                \draw (-0.15,0.15) -- (0.15,-0.3);
                \draw (0.3,0.3) -- (-0.15,-0.3);
            \end{tikzpicture}
            +
            \begin{tikzpicture}[centerzero]
                \draw (0.3,0.3) -- (0.15,0.15) -- (0,0.3);
                \draw (0.15,0.15) -- (-0.15,-0.3);
                \draw (-0.3,0.3) -- (0.15,-0.3);
            \end{tikzpicture}
        \right)
        +
        \kappa_2
        \left(
            \begin{tikzpicture}[centerzero]
                \draw (-0.3,0.3) -- (-0.3,0.23) arc(180:360:0.15) -- (0,0.3);
                \draw (0.3,0.3) -- (0.15,0) -- (-0.2,-0.3);
                \draw (0.2,-0.3) -- (0.15,0);
            \end{tikzpicture}
            +
            \begin{tikzpicture}[centerzero]
                \draw (0.3,0.3) -- (0.3,0.23) arc(360:180:0.15) -- (0,0.3);
                \draw (-0.3,0.3) -- (-0.15,0) -- (0.2,-0.3);
                \draw (-0.2,-0.3) -- (-0.15,0);
            \end{tikzpicture}
            +
            \begin{tikzpicture}[centerzero]
                \draw (0,0.3) -- (0,-0.15) -- (-0.15,-0.3);
                \draw (0,-0.15) -- (0.15,-0.3);
                \draw (-0.2,0.3) -- (-0.2,0.25) arc(180:360:0.2) -- (0.2,0.3);
            \end{tikzpicture}
        \right).
    \end{equation}
    Composing on the bottom with $\cupmor$, and using the fact that $\mergemor \ne 0$ (since this would imply the category is trivial, as noted at the end of \cref{sec:defin}), we get $\kappa_1=-\frac{\delta}{6}$.  Then adding a
    $
    \begin{tikzpicture}[anchorbase]
        \draw (-0.15,-0.15) -- (-0.15,0) arc(180:0:0.15) -- (0.15,-0.15);
        \draw (0.25,-0.15) -- (0.25,0.2);
    \end{tikzpicture}
    $
    to the top of \cref{saidin} implies that $\kappa_2 = -\frac{4 \kappa_1}{\delta+2} = \frac{2 \delta}{3(\delta+2)}$.  Finally, adding $\idstrand \, \mergemor$ to the top of \cref{saidin} and using \cref{magic} gives
    \[
        0 = - \left( \frac{\delta \alpha}{6} + \frac{2 \alpha \delta}{3(\delta+2)} \right)\, \jail
        + \left( \alpha - \frac{2 \alpha \delta}{3(\delta+2)} \right) \hourglass
        + \left( \frac{4\delta}{3(\delta+2)} + \frac{\delta}{3} \right) \Imor
        - \left( \frac{2 \alpha \delta}{3(\delta+2)} + \frac{\alpha \delta}{6} \right) \crossmor.
    \]
    By \cref{SUP}, all the above coefficients are zero.  However, this occurs if and only if $\delta=-6$.

    So we now have a relation of the form \cref{pentbreak1}.  Let $\theta = \frac{\alpha(2-\delta)}{2(\delta+2)}$, so that $\trimor = \theta \mergemor$.  Composing with $\mergemor$ on the rightmost two strings at the top of \cref{pentbreak1} gives
    \begin{multline*}
        \theta \sqmor
        =
        \gamma_1 \left( (\theta + \alpha) \left( \Hmor + \Imor \right) + \sqmor \right)
        + \gamma_2
        \left(
            \alpha \left(\, \jail + \hourglass\, \right)
            + \Hmor + \Imor
        \right)
        \\
        + \gamma_3 \left( \Hmor + \Imor + 2\, \dotcross + \alpha \crossmor \right).
    \end{multline*}
    Thus, using \cref{magic} to eliminate $\dotcross$, we have
    \[
        (\theta - \gamma_1) \sqmor
        =
        \left( \alpha \gamma_2 + \frac{4\alpha}{\delta+2} \gamma_3 \right)
        \left(\, \jail + \hourglass \, \right)
        + \left( (\theta+\alpha) \gamma_1 + \gamma_2 - \gamma_3 \right)
        \left(\, \Hmor + \Imor\, \right)
        + \frac{\alpha(\delta+6)}{\delta+2} \gamma_3
        \, \crossmor\ .
    \]
    Comparing to \cref{sqburst} and using the fact that the morphisms \cref{bigfive} are linearly independent (by \cref{SUP}), this gives
    \begin{align*}
        \frac{\alpha(\delta+14)}{2(\delta+2)^2} (\theta - \gamma_1)
        &= \gamma_2 + \frac{4}{\delta+2} \gamma_3,
        \\
        \frac{\alpha(\delta-6)}{2(\delta+2)} (\theta - \gamma_1)
        &= (\theta+\alpha)\gamma_1 + \gamma_2 - \gamma_3,
        \\
        \frac{3 \alpha (2-\delta)}{2(\delta+2)} (\theta - \gamma_1)
        &= (\delta+6) \gamma_3.
    \end{align*}
    Since $\delta \ne 2$, we can eliminate the left-hand sides of the first and third equation, eliminate the left-hand sides of the second and third equations, and substitute $\theta = \frac{\alpha(2-\delta)}{2(\delta+2)}$ to obtain the linear system
    \begin{gather} \nonumber
        \gamma_2 + \frac{\delta+30}{3(\delta-2)} \gamma_3
        = 0,
        \qquad
        \frac{\alpha(\delta+6)}{2(\delta+2)} \gamma_1 + \gamma_2 + \frac{\delta^2-3\delta-30}{3(\delta-2)} \gamma_3
        = 0,
        \\ \label{pent1}
        \frac{3 \alpha (2-\delta)}{2(\delta+2)} \gamma_1
        + (\delta+6) \gamma_3
        = \frac{3 \alpha^2 (2-\delta)^2}{4(\delta+2)^2}.
    \end{gather}
    Subtracting the first equation above from the second gives
    \[
        \frac{\alpha(\delta+6)}{2(\delta+2)} \gamma_1 + \frac{(\delta+6)(\delta-10)}{3(\delta-2)} \gamma_3
        = 0
        \iff
        \gamma_3 =
        \frac{3 \alpha (2-\delta)}{2(\delta+2)(\delta-10)} \gamma_1,
    \]
    since $\delta \ne -6,10$.  Combining with \cref{pent1} then gives
    \[
        \frac{3 \alpha (2-\delta)}{2(\delta+2)} \gamma_1
        + \frac{3 \alpha (\delta+6)(2-\delta)}{2(\delta+2)(\delta-10)} \gamma_1
        = \frac{3 \alpha^2 (2-\delta)^2}{4(\delta+2)^2}
        \implies
        \gamma_1
        = \frac{\alpha(10-\delta)}{4(\delta+2)}.
    \]
    Thus
    \[
        \gamma_3 =
        \frac{3 \alpha (2-\delta)}{2(\delta+2)(\delta-10)} \gamma_1
        = \frac{3 \alpha^2 (\delta-2)}{8(\delta+2)^2}
        \quad \text{and} \quad
        \gamma_2
        = \frac{\delta+30}{3(2-\delta)} \gamma_3
        = - \frac{\alpha^2 (\delta+30)}{8(\delta+2)^2},
    \]
    giving \cref{pentburst}.
    \details{
        As a check, adding a $\capmor$ to the rightmost top two strings in \cref{pentburst} gives
        \[
            \alpha \theta \mergemor
            = \gamma_1 (2\alpha + \theta) \mergemor
            + \gamma_2 (\delta + 2) \mergemor
            + 4 \gamma_3 \mergemor.
        \]
        So we should have
        \[
            \alpha \theta = (2 \alpha + \theta)\gamma_1 +(\delta+2)\gamma_2 + 4 \gamma_3,
        \]
        that is,
        \[
            \frac{\alpha^2(2-\delta)}{2(\delta+2)}
            = \frac{\alpha(3\delta+10)}{2(\delta+2)} \gamma_1 + (\delta+2) \gamma_2 + 4 \gamma_3.
        \]
        Indeed, we have
        \begin{align*}
            \frac{\alpha(3\delta+10)}{2(\delta+2)} \gamma_1 + (\delta+2) \gamma_2 + 4 \gamma_3
            &= \frac{\alpha(3\delta+10)}{2(\delta+2)} \frac{\alpha(10-\delta)}{4(\delta+2)}
            - (\delta+2) \frac{\alpha^2 (\delta+30)}{8(\delta+2)^2}
            + 4 \frac{3 \alpha^2 (\delta-2)}{8(\delta+2)^2}
            \\
            &= \frac{\alpha^2 (16-4\delta^2)}{8(\delta+2)^2}
            = \frac{\alpha^2 (2-\delta)}{2(\delta+2)}
        \end{align*}
        as desired.
    }
\end{proof}

\begin{rem}
    When $\delta \in \{2,-6,10\}$, there exist other solutions to the linear system appearing in the proof of \cref{pentexplode}.  We are not sure of the role of these other categories in representation theory.  Compare to \cref{zagreb}.
\end{rem}

\begin{proof}[Proof of \cref{demayo}]
    The theorem now follows immediately from \cref{SUP,sqexplode,pentexplode}.
\end{proof}

\begin{rem}
    When combined with \cref{meow}, \cref{demayo} implies that every pivotal symmetric monoidal category $\cC$ generated by a symmetric self-dual object $\go$ and a rotationally invariant symmetric morphism $\one \to \go^{\otimes 3}$, and with $\dim \cC(\one, \go^{\otimes n})$ equal to $1,0,1,1,5,15$ for $n=0,1,2,3,4,5$, respectively, is a quotient of $\Fcat$ for some value of $\delta$.  Similar categories, with different conditions on the dimensions, were classified in \cite{MPS17}.  The corresponding statement for the quantum $G_2$ link invariant is given in \cite[Th.~2.1]{Kup94}.
\end{rem}

\section{The Albert algebra and the Lie group of type $F_4$}

In this section, we will develop some properties of the Albert algebra and the Lie group and Lie algebra of type $F_4$ that will be used in the sequel. For further details, we refer the reader to \cite[Ch.~16]{Ada96}.  Let
\[
    A
    =
    \left\{
        \begin{pmatrix}
            \lambda_1 & x_3 & \bar{x}_2 \\
            \bar{x}_3 & \lambda_2 & x_1 \\
            x_2 & \bar{x}_1 & \lambda_3
        \end{pmatrix}
        : \lambda_i \in \R,\ x_i \in \OO
    \right\}
\]
denote the set of $3 \times 3$ self-adjoint matrices over the octonions $\OO$, equipped with the bilinear operation
\[
    a \circ b := \frac{1}{2}(ab+ba),\quad a,b \in A,
\]
where the juxtaposition $ab$ denotes usual matrix multiplication.  Thus $A$ is one of the three real Albert algebras.  Note that this algebra is commutative and unital, but \emph{not} associative.  We have $\dim_\R(A) = 27$.  Eventually, we will be interested in the complexification $\C \otimes_\R A$, which is the unique simple exceptional complex Jordan algebra, up to isomorphism.  However, since many of our preliminary arguments are valid over $\R$, we state them in that setting.

Let $\tr \colon A \to \R$ denote the trace map, so that
\begin{equation} \label{Atrace}
    \tr
    \begin{pmatrix}
        \lambda_1 & x_3 & \bar{x}_2 \\
        \bar{x}_3 & \lambda_2 & x_1 \\
        x_2 & \bar{x}_1 & \lambda_3
    \end{pmatrix}
    = \lambda_1 + \lambda_2 + \lambda_3.
\end{equation}
For $a \in A$, let
\[
    L_a \colon A \to A,\quad b \mapsto a \circ b,
\]
denote the $\R$-linear map given by left multiplication by $a$.

\begin{lem}
    For $a \in A$, we have
    \begin{equation} \label{slime}
        \tr(a) = \tfrac{1}{9} \Tr(L_a),
    \end{equation}
    where $\Tr(L_a)$ denotes the usual trace of the linear operator $L_a$ on the $27$-dimensional real vector space $A$.
\end{lem}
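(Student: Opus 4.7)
The plan is to reduce \cref{slime} to a basis computation. Both sides are $\R$-linear in $a$, so it suffices to verify the identity on a convenient basis of $A$. The natural choice is the $27$ elements consisting of the diagonal matrix units $E_{11}, E_{22}, E_{33}$ together with $X_{ij}(e_k)$ for $1 \le i < j \le 3$ and $e_k$ ranging over an $\R$-basis $\{e_0, e_1, \ldots, e_7\}$ of $\OO$, where $X_{ij}(x)$ denotes the Hermitian matrix with $x$ in position $(i,j)$, $\bar{x}$ in position $(j,i)$, and zeros elsewhere. This gives $3 + 3\cdot 8 = 27$ vectors, matching $\dim_\R A$.

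The key ingredient is the Jordan multiplication table in this basis, which is obtained by direct matrix calculation together with the conjugation identity $\overline{xy} = \bar y \bar x$: one checks that $E_{ii}\circ E_{jj} = \delta_{ij}E_{ii}$; that $E_{ii}\circ X_{jk}(x)$ equals $\tfrac12 X_{jk}(x)$ if $i \in \{j,k\}$ and $0$ otherwise; that $X_{ij}(x)\circ X_{ij}(y)$ is a scalar multiple of $E_{ii}+E_{jj}$; that $X_{ij}(x)\circ X_{jk}(y)$ has the form $X_{ik}(z)$ for a suitable octonion $z$; and that $X_{ij}(x)\circ X_{k\ell}(y)=0$ whenever $\{i,j\}\cap\{k,\ell\}=\emptyset$. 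With this table in hand, both cases of the lemma follow quickly. For a diagonal unit, $L_{E_{ii}}$ is diagonal in our basis, with eigenvalue $1$ on $E_{ii}$, eigenvalue $0$ on $E_{jj}$ for $j\ne i$, eigenvalue $\tfrac12$ on the $16$ off-diagonal basis vectors whose index pair contains $i$, and eigenvalue $0$ on the remaining $8$; summing gives $\Tr(L_{E_{ii}}) = 1 + 16 \cdot \tfrac12 = 9 = 9\,\tr(E_{ii})$. For an off-diagonal element $X_{ij}(x)$, the product rules show that $L_{X_{ij}(x)}$ sends each basis vector to a basis vector of a different \emph{type} (diagonal units to multiples of $X_{ij}$-vectors, $X_{ij}$-vectors to diagonal matrices, and $X_{ik}$-vectors to $X_{jk}$-vectors), so the matrix of $L_{X_{ij}(x)}$ in the chosen basis has zeros down the diagonal, whence $\Tr(L_{X_{ij}(x)}) = 0 = 9\,\tr(X_{ij}(x))$.

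The only mildly delicate step is the computation of $X_{ij}(x)\circ X_{jk}(y)$, since octonion multiplication is non-associative and one cannot simply contract indices. However, this reduces to writing out the two matrix products $X_{ij}(x)X_{jk}(y)$ and $X_{jk}(y)X_{ij}(x)$ entry by entry and taking their half-sum; no deeper structure of $\OO$ is required. Combining the diagonal and off-diagonal cases by $\R$-linearity completes the proof.
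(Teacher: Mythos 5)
Your proof is correct and takes essentially the same approach as the paper: both reduce by $\R$-linearity to a basis computation, choosing the basis of diagonal matrix units together with off-diagonal Hermitian elements parametrized by a basis of $\OO$, and then observe that $L_a$ has zero diagonal (in that basis) for off-diagonal $a$ while contributing $1 + 16\cdot\tfrac12 = 9$ for each diagonal unit. The paper organizes the computation slightly more uniformly by writing a general $a = xE_{ij} + \bar xE_{ji}$ and reading off when the diagonal entries of $L_a$ can be nonzero, whereas you first tabulate the Jordan products and then split into cases, but the underlying calculation is identical.
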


\begin{proof}
    Let $E_{ij}$ denote the matrix with a $1$ in the $(i,j)$-entry and all other entries equal to zero.  Since both sides of \cref{slime} are $\R$-linear in $a$, it suffices to consider the cases where $a = x E_{ij} + \bar{x} E_{ji}$ for $x \in \OO$, $1 \le i,j \le 3$.  Consider the basis of $A$ given by the elements
    \[
        y E_{kl} + \bar{y} E_{lk},\quad k \le l,
    \]
    where $y$ runs over a basis of $\OO$ if $k \ne l$, and $y=\frac{1}{2}$ if $k=l$.  Now
    \[
        L_a(y E_{kl} + \bar{y} E_{lk})
        = \frac{1}{2}
        \left(
            \delta_{jk} xy E_{il}
            + \delta_{jl} x\bar{y} E_{ik}
            + \delta_{ik} \bar{x}y E_{jl}
            + \delta_{il} \bar{x}\bar{y} E_{jk}
        \right).
    \]
    We see that the $yE_{kl} + \bar{y}E_{lk}$ component of this is zero unless $i=j=k$ or $i=j=l$.  Thus $\tr(L_a)$ is zero unless $a$ is a diagonal matrix.  On the other hand, if $a = \lambda E_{ii}$, $\lambda \in \R$, then $L_a$ acts as multiplication by $\frac{\lambda}{2}(\delta_{ik} + \delta_{il})$ on the subspace $x E_{kl} + \bar{x} E_{lk}$, $x \in \OO$, $k \ne l$, and multiplication by $\lambda$ on the subspace $\R E_{ii}$.  Thus $\Tr(L_a) = (1 + \frac{1}{2}(16)) \lambda = 9 \lambda$.
\end{proof}

Let $G$ denote the group of algebra automorphisms of $A$.  Thus $G$ is the compact connected real Lie group of type $F_4$.

\begin{lem} \label{versa}
    We have $\tr(ga) = \tr(a)$ for all $a \in A$ and $g \in G$.
\end{lem}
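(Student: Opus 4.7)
The plan is to reduce the statement to the previous lemma's identity $\tr(a) = \tfrac{1}{9}\Tr(L_a)$ and then exploit conjugation-invariance of the trace of a linear operator.

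First I would observe that, since $g \in G$ is an algebra automorphism of $A$, we have $g(a \circ b) = g(a) \circ g(b)$ for all $b \in A$. Rewriting this as $L_{ga}(g(b)) = g(L_a(b))$, this says precisely that the left-multiplication operators satisfy
\[
    L_{ga} = g \circ L_a \circ g^{-1}
\]
as $\R$-linear endomorphisms of the $27$-dimensional real vector space $A$ (here we use that $g$ is invertible as a linear map, being a group element).

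Next, by cyclicity of the trace of a linear operator, conjugation does not change $\Tr$, so $\Tr(L_{ga}) = \Tr(g \circ L_a \circ g^{-1}) = \Tr(L_a)$. Combining this with \cref{slime} applied to both $ga$ and $a$ yields
\[
    \tr(ga) = \tfrac{1}{9}\Tr(L_{ga}) = \tfrac{1}{9}\Tr(L_a) = \tr(a),
\]
as required.

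There is essentially no obstacle here: the only nontrivial input is the previous lemma expressing the Albert trace intrinsically in terms of the operator trace of left multiplication, which automatically makes $\tr$ a categorical/representation-theoretic invariant and hence preserved by any algebra automorphism.
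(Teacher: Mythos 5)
Your proof is correct and is essentially identical to the paper's: both establish the operator identity $L_{ga} = g L_a g^{-1}$ directly from the fact that $g$ is an algebra automorphism, then combine conjugation-invariance of $\Tr$ with the formula $\tr(a) = \tfrac{1}{9}\Tr(L_a)$ from the preceding lemma.
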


\begin{proof}
    For $a \in A$ and $g \in G$, we have
    \[
        (g L_a g^{-1})(b)
        = g(a \circ (g^{-1}b))
        = (ga) \circ b
        = L_{ga}(b).
    \]
    Thus $\tr(ga) = \frac{1}{9} \Tr(L_{ga}) = \frac{1}{9} \Tr (g L_a g^{-1}) = \frac{1}{9} \Tr(L_a) = \tr(a)$.
\end{proof}

\begin{cor} \label{squirrel}
    The symmetric bilinear form
    \begin{equation} \label{Bdef}
        B \colon A \otimes A \to \R,\quad
        B(a \otimes b) := \tr(a \circ b),
    \end{equation}
    is nondegenerate and $G$-invariant.
\end{cor}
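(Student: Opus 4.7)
The plan is to address the two claims separately. Symmetry of $B$ follows immediately from the commutativity of $\circ$, so the content lies in $G$-invariance and nondegeneracy.

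For $G$-invariance, I would argue in one line using \cref{versa}. Since any $g \in G$ is an algebra automorphism of $A$, we have $g(a \circ b) = (ga) \circ (gb)$ for all $a,b \in A$. Therefore
\[
    B(ga, gb) = \tr\bigl((ga) \circ (gb)\bigr) = \tr\bigl(g(a \circ b)\bigr) = \tr(a \circ b) = B(a,b),
\]
where the third equality is \cref{versa}.

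For nondegeneracy, the approach is to write down an explicit orthogonal basis and compute the Gram matrix. Fix an orthonormal basis $e_0=1, e_1, \dots, e_7$ of $\OO$ (with respect to the standard inner product satisfying $x\bar y + y\bar x = 2\langle x,y\rangle$, so $\bar e_p = -e_p$ for $p \ge 1$). Then $A$ has the $\R$-basis
\[
    \{E_{ii} : 1 \le i \le 3\} \cup \{b_{kl}^p := e_p E_{kl} + \bar e_p E_{lk} : 1 \le k < l \le 3,\ 0 \le p \le 7\}.
\]
First I would verify $B(E_{ii}, E_{jj}) = \delta_{ij}$, which is immediate from $E_{ii} \circ E_{jj} = \delta_{ij} E_{ii}$. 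Next, a quick check of matrix entries shows $E_{ii} \circ b_{kl}^p$ has vanishing diagonal (since $b_{kl}^p$ has vanishing diagonal and multiplying on either side by $E_{ii}$ cannot create diagonal entries in positions outside the zero rows/columns of $b_{kl}^p$), so $B(E_{ii}, b_{kl}^p) = 0$. Finally, for the interesting block, one computes
\[
    b_{kl}^p\, b_{kl}^q = e_p \bar e_q\, E_{kk} + \bar e_p e_q\, E_{ll},
\]
whence
\[
    b_{kl}^p \circ b_{kl}^q = \tfrac{1}{2}\bigl(e_p \bar e_q + e_q \bar e_p\bigr) E_{kk} + \tfrac{1}{2}\bigl(\bar e_p e_q + \bar e_q e_p\bigr) E_{ll} = \delta_{pq}\,(E_{kk} + E_{ll}),
\]
using the inner product identity in $\OO$. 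A similar row/column support argument shows $B(b_{kl}^p, b_{mn}^q) = 0$ whenever $\{k,l\} \ne \{m,n\}$. Thus the Gram matrix of $B$ is block diagonal with positive-definite blocks (an identity block of size $3$ and eight identity blocks of size $2$), so $B$ is nondegenerate.

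The main obstacle is the bookkeeping of octonion multiplication in the off-diagonal block, particularly since $\OO$ is nonassociative; however, only the identity $x\bar y + y\bar x = 2\langle x,y\rangle$ is needed, which is part of the very definition of the octonionic norm form, so no serious non-associativity issue arises. Everything else is elementary linear algebra.
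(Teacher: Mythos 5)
Your proof is correct and takes essentially the same approach as the paper: both establish nondegeneracy by exhibiting $B$ as positive definite in coordinates, and the $G$-invariance argument is identical to the paper's (and to the paper's appeal to \cref{versa}). The paper simply computes $B(a,a) = \sum_i(\lambda_i^2 + 2\|x_i\|^2)$ directly, while you compute the full Gram matrix; the content is the same. One small slip: the Gram matrix you computed is $I_3 \oplus 2I_{24}$ (since $B(b_{kl}^p, b_{kl}^q)=2\delta_{pq}$), not ``an identity block of size $3$ and eight identity blocks of size $2$'' — but this is purely a miscount in the summary sentence and does not affect the argument.
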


We will sometimes write $B(a,b)$ for $B(a \otimes b)$.

\begin{proof}
    Direct computation shows that, if $a$ is the matrix appearing in \cref{Atrace}, then
    \[
        B(a, a)
        = \sum_{i=1}^3 \left( \lambda_i^2 + 2 \|x_i\|^2 \right),
    \]
    which is nonzero when $a \ne 0$.  Hence $B$ is nondegenerate.  Since
    \[
        B(ga, gb)
        = \tr((ga) \circ (gb))
        = \tr(g(a \circ b))
        = \tr(a \circ b)
        = B(a \otimes b),
    \]
    we also see that $B$ is $G$-invariant.
\end{proof}

It follows from \cref{squirrel} that we have a decomposition of $G$-modules
\begin{equation} \label{rabbit}
    A = \R 1_A \oplus V_\R,\quad
    V_\R := \ker(\tr).
\end{equation}
Then $V_\R$ is the $26$-dimensional irreducible $G$-module (\cite[Cor.~16.2]{Ada96}).  Let
\begin{equation} \label{blueberry}
    \pi \colon A \to V_\R,\quad \pi(a) = a - \frac{1}{3} \tr(a) 1_A
\end{equation}
be the projection along the decomposition \cref{rabbit}.

For $x \in \OO$, let $\RP(x)$ denote its real part.  It is straightforward to verify that (or see \cite[Lem.~15.9, Cor.~15.12]{Ada96})
\begin{equation} \label{crayon}
    \RP(xy) = \RP(yx),\quad
    \RP((xy)z) = \RP(x(yz)),\qquad
    x,y,z \in \OO.
\end{equation}
For $X \in \Mat_{3 \times 3}(\OO)$, let $\tr_\R = \RP(\tr(X))$.

\begin{lem}
    For $X,Y,Z \in \Mat_{3 \times 3}(\OO)$, we have
    \begin{equation} \label{trip}
        \tr_\R(XY) = \tr_\R(YX)
        \quad \text{and} \quad
        \tr_\R((XY)Z) = \tr_\R(X(YZ)).
    \end{equation}
\end{lem}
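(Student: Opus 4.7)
The plan is to reduce both identities to their scalar-entry counterparts in \cref{crayon} by expanding the matrix products entry-by-entry and moving the real part $\RP$ inside the resulting sums.

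For the first identity, I would begin by writing
\[
    \tr_\R(XY)
    = \sum_{i=1}^3 \RP\bigl((XY)_{ii}\bigr)
    = \sum_{i,j} \RP(X_{ij} Y_{ji}),
\]
where the last equality uses that $\RP \colon \OO \to \R$ is $\R$-linear. Applying $\RP(xy)=\RP(yx)$ from \cref{crayon} termwise and then relabeling the summation indices $(i,j) \leftrightarrow (j,i)$ yields $\sum_{i,j} \RP(Y_{ji} X_{ij}) = \tr_\R(YX)$. This step is routine and essentially bookkeeping.

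For the second identity, I would expand similarly:
\[
    \tr_\R\bigl((XY)Z\bigr)
    = \sum_{i,j} \RP\bigl((XY)_{ij} Z_{ji}\bigr)
    = \sum_{i,j,k} \RP\bigl((X_{ik} Y_{kj}) Z_{ji}\bigr).
\]
Here the key point, and the main (though still mild) thing to watch, is that one must not prematurely associate octonion products: the step that moves the parentheses from $(X_{ik}Y_{kj})Z_{ji}$ to $X_{ik}(Y_{kj}Z_{ji})$ is precisely where the second statement of \cref{crayon} is invoked, and this is only valid after one has passed to real parts. Applying this identity termwise and then recollecting the sum in the opposite order gives
\[
    \sum_{i,j,k} \RP\bigl(X_{ik}(Y_{kj} Z_{ji})\bigr)
    = \sum_{i,k} \RP\bigl(X_{ik} (YZ)_{ki}\bigr)
    = \tr_\R\bigl(X(YZ)\bigr).
\]

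In short, there is no real obstacle: both identities follow by expanding matrix products into sums of octonion products, commuting $\RP$ past finite sums by linearity, and then applying the two identities of \cref{crayon} entrywise. The only subtle point is conceptual, namely that neither $XY = YX$ nor $(XY)Z = X(YZ)$ holds inside $\Mat_{3\times 3}(\OO)$ in general, so the statements genuinely require the real-part trace rather than an arbitrary octonionic trace.
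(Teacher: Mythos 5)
Your proof is correct and takes essentially the same approach as the paper: both arguments reduce the matrix trace identities to the scalar octonion identities \cref{crayon} by exploiting $\R$-(multi)linearity. The paper phrases this as restricting to elementary matrices $xE_{ij}$, whereas you expand the matrix products entrywise, but these are the same reduction.
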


\begin{proof}
    Since both sides of both equalities to be proved are $\R$-linear in $X$, $Y$, and $Z$, it suffices to consider the case where $X = x E_{ij}$, $Y = y E_{kl}$, and $Z = z E_{mn}$, for $x,y,z \in \OO$.  For the first equality, we have
    \[
        \tr_\R(XY)
        = \delta_{jk} \delta_{il} \RP(xy)
        \overset{\cref{crayon}}{=} \delta_{jk} \delta_{il} \RP(yx)
        = \tr_\R(YX).
    \]
    For the second equality, we have
    \[
        \tr_\R((XY)Z)
        = \delta_{jk} \delta_{lm} \delta_{in} \RP((xy)z)
        \overset{\cref{crayon}}{=} \delta_{jk} \delta_{lm} \delta_{in} \RP(x(yz))
        = \tr_\R(X(YZ)). \qedhere
    \]
\end{proof}

\begin{lem}
    We have
    \begin{equation} \label{cedar}
        \tr((a \circ b) \circ c) = \tr(a \circ (b \circ c)),\quad a,b,c \in A.
    \end{equation}
\end{lem}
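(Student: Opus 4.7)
The plan is to reduce the claim to the identities \cref{trip} for the real-part trace on $\Mat_{3\times 3}(\OO)$. Since both $(a\circ b)\circ c$ and $a\circ(b\circ c)$ lie in $A$ (because $A$ is closed under $\circ$), their diagonal entries are real, so the Albert trace $\tr$ agrees with $\tr_\R$ on these elements. Hence it suffices to verify
\[
    \tr_\R\bigl((a\circ b)\circ c\bigr) = \tr_\R\bigl(a\circ (b\circ c)\bigr),
\]
where now both sides may be manipulated using ordinary (though nonassociative) matrix multiplication in $\Mat_{3\times 3}(\OO)$.

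Next, I would expand both sides using $x\circ y = \tfrac12(xy+yx)$. This gives
\[
    (a\circ b)\circ c = \tfrac14\bigl((ab)c + (ba)c + c(ab) + c(ba)\bigr),
    \qquad
    a\circ (b\circ c) = \tfrac14\bigl(a(bc) + a(cb) + (bc)a + (cb)a\bigr).
\]
Applying $\tr_\R$ and using the cyclic property $\tr_\R(XY) = \tr_\R(YX)$ from \cref{trip}, both expressions collapse to sums of two terms each:
\[
    \tr_\R\bigl((a\circ b)\circ c\bigr) = \tfrac12\bigl(\tr_\R((ab)c) + \tr_\R((ba)c)\bigr),
    \qquad
    \tr_\R\bigl(a\circ (b\circ c)\bigr) = \tfrac12\bigl(\tr_\R(a(bc)) + \tr_\R(a(cb))\bigr).
\]

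Finally, I would match the four terms using both identities in \cref{trip}. The pair $\tr_\R((ab)c)$ and $\tr_\R(a(bc))$ agree directly by the associativity of $\tr_\R$. For the remaining pair, a short chain using cyclicity and then associativity gives
\[
    \tr_\R((ba)c) = \tr_\R(c(ba)) = \tr_\R((cb)a) = \tr_\R(a(cb)),
\]
completing the proof. The argument is essentially bookkeeping, so no real obstacle is expected; the only subtle point is the initial reduction from $\tr$ on $A$ to $\tr_\R$ on $\Mat_{3\times 3}(\OO)$, which is what allows us to use \cref{trip} despite the nonassociativity of $\OO$.
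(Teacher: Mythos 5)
Your proposal is correct and follows essentially the same route as the paper: convert $\tr$ to $\tr_{\R}$ on $A$, expand both sides of \cref{cedar} via $x\circ y=\tfrac12(xy+yx)$, and invoke both parts of \cref{trip}. The paper just compresses the term-by-term bookkeeping into a single application of \cref{trip} to the full sum, whereas you spell out the cyclicity and associativity moves individually; the content is identical.
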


\begin{proof}
    For $a,b,c \in A$, we have
    \begin{multline*}
        4 \tr((a \circ b) \circ c)
        = 4 \tr_\R((a \circ b) \circ c)
        = \tr_\R((ab)c + (ba)c + c(ab) + c(ba)) \\
        \overset{\cref{trip}}{=} \tr_\R(a(bc) + a(cb) + (bc)a + (cb)a)
        = 4 \tr_\R(a \circ (b \circ c))
        = 4 \tr(a \circ (b \circ c)). \qedhere
    \end{multline*}
\end{proof}

\begin{lem} \label{raspberry}
    For $a \in V_\R$, we have
    \begin{equation} \label{boysenberry}
        \pi(\pi(a \circ a) \circ a) = \tfrac{1}{6} \tr(a \circ a) a.
    \end{equation}
\end{lem}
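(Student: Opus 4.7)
The plan is first to unwind the definition of $\pi$ to reduce \cref{boysenberry} to a purely algebraic cubic identity on $V_\R$, and then to recognize that identity as the Cayley--Hamilton theorem for $3 \times 3$ Hermitian octonionic matrices with vanishing trace.

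Step 1 (unwinding $\pi$). Since $a \in V_\R = \ker(\tr)$, we have $\pi(a) = a$. Write $b := a \circ a$. The definition \cref{blueberry} of $\pi$ gives $\pi(b) = b - \tfrac{1}{3}\tr(b)\, 1_A$, so
\[
    \pi(b) \circ a = b \circ a - \tfrac{1}{3}\tr(b)\, a,
\]
and applying $\pi$ once more and using $\pi(a) = a$ yields
\[
    \pi(\pi(b) \circ a) = b \circ a - \tfrac{1}{3}\tr(b \circ a)\, 1_A - \tfrac{1}{3}\tr(b)\, a.
\]
Therefore \cref{boysenberry} is equivalent to the cubic identity
\[
    (a \circ a) \circ a = \tfrac{1}{3}\tr\bigl((a \circ a) \circ a\bigr)\, 1_A + \tfrac{1}{2}\tr(a \circ a)\, a, \qquad a \in V_\R. \tag{$\star$}
\]

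Step 2 (Cayley--Hamilton). The identity $(\star)$ is the Cayley--Hamilton theorem for self-adjoint octonionic $3 \times 3$ matrices with vanishing trace. The crucial point is that for any self-adjoint $a \in A$, the subalgebra of $\Mat_{3 \times 3}(\OO)$ generated by $a$ under ordinary matrix multiplication is associative (indeed commutative), so $a \circ a = a^2$ and $(a \circ a) \circ a = a^3$ in an unambiguous sense. The characteristic polynomial of $a$ on the Albert algebra is $\lambda^3 - c_1(a)\lambda^2 + c_2(a)\lambda - c_3(a)$, where $c_1 = \tr$, $c_2 = \tfrac{1}{2}(c_1^2 - \tr(a \circ a))$, and $c_3$ is the cubic norm. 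Cayley--Hamilton then gives $a^3 = c_1(a) a^2 - c_2(a) a + c_3(a) 1_A$. Specializing to $c_1(a) = 0$ yields $a^3 = \tfrac{1}{2}\tr(a \circ a)\, a + c_3(a)\, 1_A$, and taking $\tr$ of both sides (using $\tr(a) = 0$ and $\tr(1_A) = 3$) identifies $c_3(a) = \tfrac{1}{3}\tr(a^3)$, which is $(\star)$.

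Alternatively, one can verify $(\star)$ directly by exploiting $G$-equivariance: by \cref{versa,cedar}, both sides of $(\star)$ are $G$-equivariant in $a$, and by the spectral theorem for the compact real Albert algebra, every element of $V_\R$ is $G$-conjugate to a traceless real diagonal matrix $\operatorname{diag}(\lambda_1,\lambda_2,\lambda_3)$ with $\lambda_1 + \lambda_2 + \lambda_3 = 0$. Checking $(\star)$ entry by entry reduces to
\[
    \lambda_i^3 = \tfrac{1}{3}\sum_{j} \lambda_j^3 + \tfrac{1}{2}\Bigl(\sum_{j}\lambda_j^2\Bigr)\lambda_i,
\]
which is the instance of Newton's identities valid when $\sum_j \lambda_j = 0$.

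The main obstacle is the second step: identifying $(\star)$ with a Cayley--Hamilton statement in the non-associative setting of the Albert algebra, or alternatively invoking the spectral theorem to reduce to the diagonal case. The nonassociativity of $\OO$ makes this mildly delicate, but it is resolved by the observation that a single self-adjoint octonionic matrix generates an associative subalgebra, so the cubic expressions appearing in $(\star)$ are well-defined and standard manipulations apply.
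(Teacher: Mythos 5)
Your Step 1 is exactly the paper's unwinding: writing $\pi$ out via \cref{blueberry} reduces \cref{boysenberry} to your identity $(\star)$, and the paper then dispatches $(\star)$ by citing \cite[Th.~16.6(iii)]{Ada96}, which is precisely the Cayley--Hamilton identity for the Albert algebra in the traceless case. So the route is the same at the top level. The difficulty is in your Step 2, where you try to justify Cayley--Hamilton from scratch.

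The claim that ``for any self-adjoint $a \in A$, the subalgebra of $\Mat_{3 \times 3}(\OO)$ generated by $a$ under ordinary matrix multiplication is associative'' is false, and in fact this failure is the whole reason the Albert algebra is called \emph{exceptional}. Writing the off-diagonal octonions as $x_1,x_2,x_3$ (as in \cref{Atrace}), a direct computation of the $(1,1)$-entry of $(aa)a - a(aa)$ using the alternating property of the octonionic associator and $\cref{crayon}$ gives
\[
    \big((aa)a - a(aa)\big)_{11} = [x_3,x_1,x_2] + [\bar{x}_2,\bar{x}_1,\bar{x}_3] = 2\,[x_1,x_2,x_3],
\]
where $[x,y,z] := (xy)z - x(yz)$. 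For three generic octonions this is nonzero, so $a(aa) \ne (aa)a$; the subalgebra generated by a single Hermitian octonionic matrix under matrix multiplication is neither associative nor commutative. (Artin's theorem covers two octonions, not the three occurring as off-diagonal entries.) What \emph{is} true is that the Jordan algebra $(A,\circ)$ is power-associative, so $a^{\circ n}$ is well-defined, and Adams states the Cayley--Hamilton identity entirely in terms of $\circ$ with a cubic form defined as $\tr\big((\,\cdot\,\circ\,\cdot\,) \circ \,\cdot\,\big)$, well-defined by \cref{cedar}. That is what needs to be invoked here.

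Your alternative spectral argument is a legitimate second route and would salvage the proof, but as written it has two loose ends. First, the spectral theorem for the compact real Albert algebra (every Hermitian $a$ is $G$-conjugate to a real diagonal matrix) is a genuine theorem and needs a citation (it is also in \cite[Ch.~16]{Ada96}). Second, the appeal to \cref{cedar} for ``$G$-equivariance'' is misplaced: \cref{cedar} is associativity of the trace form, not a statement about the $G$-action. The $G$-equivariance of both sides of $(\star)$ follows from the fact that $G$ acts by algebra automorphisms (so it commutes with $\circ$ by definition) together with \cref{versa} (invariance of $\tr$); \cref{cedar} is not needed for that. Once those points are tightened, the diagonal computation you give, which is Newton's identity for $p_1 = 0$, does verify $(\star)$.
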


\begin{proof}
    Since $\tr(a)=0$, \cite[Th.~16.6(iii)]{Ada96} (which is essentially the Cayley--Hamilton theorem for $A$) implies that
    \[
        0 = (a \circ a) \circ a - \tfrac{1}{2} \tr(a \circ a) a - \tfrac{1}{3} \tr((a \circ a) \circ a)
        \overset{\cref{blueberry}}{=} \pi((a \circ a) \circ a) - \tfrac{1}{2} \tr(a \circ a) a.
    \]
    Note that the $\ell$, $b$, and $t$ of \cite{Ada96} are our $\tr$, $B$, and $\tr((- \circ -) \circ -)$, respectively.  Since $\pi(a)=a$, we also have
    \[
        \pi((a \circ a) \circ a)
        \overset{\cref{blueberry}}{=} \pi(\pi(a \circ a) \circ a) + \tfrac{1}{3} \tr(a \circ a) a.
    \]
    The identity \cref{boysenberry} now follows.
\end{proof}

\begin{cor}
    For $a,b,c \in V_\R$, we have
    \begin{equation} \label{mango}
        \pi(\pi(a \circ b) \circ c) + \pi(\pi(b \circ c) \circ a) + \pi(\pi(a \circ c) \circ b)
        = \tfrac{1}{6} \left( \tr(b \circ c) a + \tr(a \circ b) c + \tr(a \circ c) b \right).
    \end{equation}
\end{cor}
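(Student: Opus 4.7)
The plan is to derive \cref{mango} from \cref{boysenberry} by polarization. Denote the left-hand side of \cref{mango} by $F(a,b,c)$ and the right-hand side by $\tfrac{1}{6}G(a,b,c)$, so that
\[
    F(a,b,c) = \pi(\pi(a \circ b) \circ c) + \pi(\pi(b \circ c) \circ a) + \pi(\pi(a \circ c) \circ b),
\]
\[
    G(a,b,c) = \tr(b \circ c)\, a + \tr(a \circ b)\, c + \tr(a \circ c)\, b.
\]
Both $F$ and $G$ are trilinear on $V_\R$ (since $\circ$, $\pi$, and $\tr$ are all linear in each appropriate slot), and both are manifestly symmetric under arbitrary permutations of $a,b,c$: each is the cyclic sum of a term that is already symmetric in two of its three arguments by commutativity of $\circ$, so swapping any two of $a,b,c$ permutes the three summands among themselves.

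Setting $a = b = c$ collapses the three summands in each of $F$ and $G$, giving
\[
    F(a,a,a) = 3\pi(\pi(a \circ a) \circ a)
    \qquad \text{and} \qquad
    G(a,a,a) = 3\tr(a \circ a)\, a.
\]
Thus \cref{boysenberry} is precisely the identity $F(a,a,a) = \tfrac{1}{6}\,G(a,a,a)$ for all $a \in V_\R$. Now I would invoke the standard polarization identity: over a field of characteristic zero, a symmetric trilinear form $T$ on a vector space is determined by its associated cubic $P(x) := T(x,x,x)$ through
\[
    6T(a,b,c) = P(a+b+c) - P(a+b) - P(a+c) - P(b+c) + P(a) + P(b) + P(c).
\]
Applying this identity to $F$ and to $G$ separately, and then using \cref{boysenberry} on each of the seven diagonal terms on the right-hand side (which is legitimate because $V_\R$ is a subspace, so each of $a+b+c, a+b, a+c, b+c, a, b, c$ lies in $V_\R$), yields $F(a,b,c) = \tfrac{1}{6}\,G(a,b,c)$, which is \cref{mango}.

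There is no substantive obstacle in this argument: verifying symmetry and trilinearity of $F$ and $G$ is formal, and polarization is a standard device in characteristic zero. The one point worth emphasizing is that both $F$ and $G$ are genuinely symmetric (not merely cyclically symmetric) in $a,b,c$, which is what lets us reduce a single trilinear identity to a single cubic identity; this symmetry is already built into the statement of \cref{mango}.
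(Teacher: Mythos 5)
Your proof is correct and takes essentially the same approach as the paper: both polarize the cubic identity \cref{boysenberry} to obtain the symmetric trilinear version \cref{mango}. The only cosmetic difference is that the paper polarizes by substituting $\lambda_a a + \lambda_b b + \lambda_c c$ into \cref{boysenberry} and extracting the $\lambda_a\lambda_b\lambda_c$ coefficient, whereas you use the equivalent inclusion--exclusion form of the polarization identity; both are standard and give the same result.
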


\begin{proof}
    This is the polarization of the identity \cref{boysenberry}.  That is, we replace $a$ in \cref{boysenberry} by $\lambda_a a + \lambda_b b + \lambda_c c$, expand, take the $\lambda_a \lambda_b \lambda_c$ terms, and then divide by $2$.
\end{proof}

Fix a basis $\B_V$ of $V_\R$.  By \cref{squirrel}, together with the fact that $\tr(V_\R \circ 1_A)=0$, there exists a dual basis $\B_V^\vee = \{b^\vee : b \in \B_V\}$ defined by
\[
    \tr(a^\vee \circ b) = \delta_{a,b},\quad a,b \in \B_V.
\]
We can extend $\B_V$ to a basis $\B_A := \B_V \sqcup \{1_A\}$ of $A$, with dual basis $\B_V^\vee \sqcup \{\frac{1}{3} 1_A\}$, i.e.\ $1^\vee = \frac{1}{3} 1_A$.  Note that the elements $\sum_{b \in \B_V} b \otimes b^\vee \in V_\R \otimes V_\R$ and $\sum_{b \in \B_A} b \otimes b^\vee \in A \otimes A$ are both independent of the choice of bases.

\begin{lem}
    For $a \in A$, we have
    \begin{equation} \label{teleport}
        \sum_{b \in \B_A} a \circ b \otimes b^\vee
        = \sum_{b \in \B_A} b \otimes b^\vee \circ a.
    \end{equation}
\end{lem}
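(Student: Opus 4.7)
The plan is to use the nondegeneracy of $B$: it suffices to show that the two sides of \cref{teleport} agree after applying $B(c,-) \otimes \id_A$ for every $c \in A$. The key inputs will be the ``associativity'' identity \cref{cedar}, the commutativity of $\circ$, the symmetry of $B$, and the defining property $B(b, c^\vee) = \delta_{b,c}$ of the dual basis.

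First I would record the basic duality relation that, for any $x \in A$,
\[
    x = \sum_{b \in \B_A} B(x, b)\, b^\vee,
\]
which follows by expanding $x$ in the basis $\{b^\vee\}_{b \in \B_A}$ and using symmetry of $B$ together with $B(b, c^\vee) = \delta_{b,c}$. Next, using \cref{cedar} together with commutativity of $\circ$, I would observe that left multiplication $L_a$ is self-adjoint with respect to $B$:
\[
    B(c, a \circ b) = \tr(c \circ (a \circ b)) = \tr((c \circ a) \circ b) = \tr((a \circ c) \circ b) = B(a \circ c, b).
\]

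With these two facts in hand, applying $B(c,-) \otimes \id_A$ to the left-hand side of \cref{teleport} gives
\[
    \sum_{b \in \B_A} B(c, a \circ b)\, b^\vee
    = \sum_{b \in \B_A} B(a \circ c, b)\, b^\vee
    = a \circ c,
\]
while applying it to the right-hand side, and using commutativity in the form $b^\vee \circ a = a \circ b^\vee$, gives
\[
    \sum_{b \in \B_A} B(c, b)\, (b^\vee \circ a)
    = a \circ \sum_{b \in \B_A} B(c, b)\, b^\vee
    = a \circ c.
\]
Since these agree for every $c \in A$ and $B$ is nondegenerate by \cref{squirrel}, the identity \cref{teleport} follows.

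I do not anticipate a serious obstacle here: the statement is essentially the assertion that the canonical element $\sum_b b \otimes b^\vee$ commutes with the diagonal action of $L_a$, and the only substantive ingredient beyond formal bookkeeping with dual bases is the self-adjointness of $L_a$, which is exactly the content of \cref{cedar}.
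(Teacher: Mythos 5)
Your proof is correct and uses the same essential ingredient as the paper, namely the associativity of the trace form (\cref{cedar}) combined with dual-basis bookkeeping. The paper expands $a \circ b$ directly in the basis $\B_A$ and resums, whereas you pair against $B(c,-)\otimes\id$ and invoke nondegeneracy; these are transposes of the same computation, with your framing (self-adjointness of $L_a$) being a mildly more conceptual way of stating it.
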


\begin{proof}
    We have
    \[
        \sum_{b \in \B_A} a \circ b \otimes b^\vee
        = \sum_{b,c \in \B_A} \tr(c^\vee \circ (a \circ b)) c \otimes b^\vee
        \overset{\cref{cedar}}{=} \sum_{b,c \in \B_A} c \otimes \tr((c^\vee \circ a) \circ b) b^\vee
        = \sum_{c \in \B_A} c \otimes c^\vee \circ a. \qedhere
    \]
\end{proof}

Let $\fg = \C \otimes_\R \fg_\R$ be the complexification of the Lie algebra $\fg_\R$ of $G$, and let $V = \C \otimes_\R V_\R$ be the corresponding natural $\fg$-module.  Let $\fg$-mod denote the category of finite-dimensional $\fg$-modules.  We continue to denote by $\tr$ and $B$ the complexification of the maps \cref{Atrace,Bdef}.  Then $\B_V$ is also a $\C$-basis of $V$ with dual basis $\B_V^\vee$.  We will continue to use the bar notation $\bar{\ }$ to denote the conjugation of the octonions, extended to their complexification by $\C$-linearity.

We conclude this section by recalling some basic facts about the representation theory of $\fg$. Consider the following labeling of the nodes of the Dynkin diagram of type $F_4$:
\[
    \begin{tikzpicture}[centerzero]
        \draw (0,0) -- (1,0);
        \draw (2,0) -- (3,0);
        \draw[style=double,double distance=2pt] (1,0) -- (2,0);
        \draw[style=double,double distance=2pt,-{Classical TikZ Rightarrow[length=3mm,width=4mm]}] (1,0) -- (1.65,0);
        \filldraw (0,0) circle (2pt) node[anchor=south] {$1$};
        \filldraw (1,0) circle (2pt) node[anchor=south] {$2$};
        \filldraw (2,0) circle (2pt) node[anchor=south] {$3$};
        \filldraw (3,0) circle (2pt) node[anchor=south] {$4$};
    \end{tikzpicture}
\]
Let $\omega_1,\omega_2,\omega_3,\omega_4$ denote the corresponding fundamental weights.  For a dominant integral weight $\lambda$, let $V_\lambda$ denote the simple $\fg$-module of highest weight $\lambda$.  In particular $V = V_{\omega_4}$, while $V_{\omega_1}$ is the adjoint representation.  We have tensor product decompositions
\begin{align} \label{2decomp}
    V^{\otimes 2} &= V_0 \oplus V_{\omega_1} \oplus V_{\omega_3} \oplus V_{\omega_4} \oplus V_{2 \omega_4},
    \\ \label{3decomp}
    V^{\otimes 3} &= V_0 \oplus V_{\omega_1}^{\oplus 2} \oplus V_{\omega_2} \oplus V_{\omega_3}^{\oplus 4} \oplus V_{\omega_4}^{\oplus 5} \oplus V_{\omega_1+\omega_4}^{\oplus 3} \oplus V_{\omega_3+\omega_4}^{\oplus 2} \oplus V_{2\omega_4}^{\oplus 3} \oplus V_{3 \omega_4}.
\end{align}
This follows, for example, from the table given in \cite[Ch.~11,~Table~7]{MPR90}.  By Schur's lemma, we thus have
\begin{equation} \label{measure}
    \dim \Hom_\fg(V^{\otimes 2}, V^{\otimes 2}) = 5
    \quad \text{and} \quad
    \dim \Hom_\fg(V^{\otimes 3}, V^{\otimes 2}) = 15
\end{equation}
The importance of these dimensions is the assumption \cref{cinco} in \cref{demayo}.

\section{The functor\label{sec:functor}}

In this section we describe a natural functor from the category $\Fcat$ to the category $\fg$-mod of finite-dimensional modules over the complex Lie algebra $\fg$ of type $F_4$.  We do this by first defining a functor from $\Tcat$, and then showing that it factors through $\Fcat$.  Throughout this section we work over the field $\kk = \C$.

\begin{theo} \label{magneto}
    There is a unique monoidal functor
    \[
        \Phi \colon \Tcat_{7/3,26} \to \fg\md
    \]
    given on objects by $\go \mapsto V$ and on morphisms by
    \begin{align}
        \Phi(\mergemor) &\colon V \otimes V \to V,&
        a \otimes b &\mapsto \pi(a \circ b),
        \\
        \Phi(\crossmor) &\colon V \otimes V \to V \otimes V,&
        a \otimes b &\mapsto b \otimes a,
        \\
        \Phi(\cupmor) &\colon \C \to V \otimes V,&
        1 &\mapsto \sum_{b \in \B_V} b \otimes b^\vee,
        \\
        \Phi(\capmor) &\colon V \otimes V \to \C,&
        a \otimes b &\mapsto B(a \otimes b) = \tr(a \circ b).
    \end{align}
    Furthermore
    \begin{equation} \label{Gsplit}
        \Phi \left( \splitmor \right) \colon V \to V \otimes V,\qquad
        a \mapsto
        \sum_{b \in \B_V} b \otimes \pi (b^\vee \circ a)
        =
        \sum_{b \in \B_V} \pi(a \circ b) \otimes b^\vee.
    \end{equation}
\end{theo}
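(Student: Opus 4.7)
The plan is to construct $\Phi$ directly from the generators-and-relations presentation of $\Tcat_{7/3,26}$ given in \cref{Tdef}. By the universal property, uniqueness is automatic, and existence reduces to two checks: that each proposed image of a generator is a morphism in $\fg\md$ (i.e., $\fg$-equivariant), and that every defining relation in \cref{vortex,venom,chess} is satisfied. For equivariance: the flip map is automatic; $\Phi(\capmor) = B$ is $\fg$-invariant by (the complexification of) \cref{squirrel}; the tensor $\Phi(\cupmor) = \sum_{b \in \B_V} b \otimes b^\vee$ is the canonical invariant determined by $B$; and $\Phi(\mergemor) = \pi \circ (-{\circ}-)$ is equivariant because $\circ$ is $G$-equivariant and $\pi$ is the $\fg$-equivariant projection from \cref{rabbit}.

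The defining relations split into formal and substantive ones. The zigzag, pivotal, and strand-slide relations of \cref{vortex}, the symmetric-group and crossing-naturality relations of \cref{venom}, and the first two equalities of \cref{chess} all follow from one of three sources: duality of $\{b\}$ and $\{b^\vee\}$ under $B$; the fact that $\Phi(\crossmor)$ is the standard symmetric braiding on $\Veccat$ with $\Phi(\mergemor)$ a module map; or commutativity $a \circ b = b \circ a$ in the Albert algebra. The consistency of the two pictures for $\splitmor$ in the second equality of \cref{vortex} --- which in turn yields the two formulas in \cref{Gsplit} --- follows by applying $\pi \otimes \pi$ to \cref{teleport} and using $\pi(1_A) = 0$ together with $\pi|_V = \id_V$. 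Of the scalar relations in \cref{chess}, $\Phi(\bubble) = \sum_{b \in \B_V} B(b, b^\vee) = \dim V = 26$, confirming $\delta = 26$; and $\Phi(\lollydrop)$ is a $\fg$-equivariant map $V \to \C$, which must vanish by Schur's lemma since $V = V_{\omega_4}$ is simple and nontrivial.

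The main obstacle is the third relation of \cref{chess}: the bigon identity $\mergemor \circ \splitmor = \alpha\, \idstrand$ with the specific scalar $\alpha = 7/3$. Since $\Phi(\mergemor) \circ \Phi(\splitmor) \colon V \to V$ is $\fg$-equivariant and $V$ is a simple $\fg$-module, Schur's lemma reduces the relation to the computation of a single scalar $\mu$ with $\Phi(\mergemor) \circ \Phi(\splitmor) = \mu \, \id_V$. I would compute $\mu$ by pairing with a generic $a \in V$ under $B$ and reducing
\[
    \mu \, B(a, a) = \sum_{b \in \B_V} B\bigl(a,\, \pi(b \circ \pi(b^\vee \circ a))\bigr)
\]
using \cref{cedar}, the Casimir-type identity \cref{teleport}, and ultimately the cubic Cayley--Hamilton identity \cref{boysenberry} (or its polarization \cref{mango}) to extract the factor $\tfrac{7}{3}$; the dimension $26$ of $V$ enters through the normalization of $\pi$. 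Once all relations are verified, the two explicit formulas in \cref{Gsplit} follow immediately by evaluating $\Phi$ on the two composite descriptions of $\splitmor$ given by the second equality of \cref{vortex}, namely $(\mergemor \otimes \id_\go) \circ (\id_\go \otimes \cupmor)$ and $(\id_\go \otimes \mergemor) \circ (\cupmor \otimes \id_\go)$.
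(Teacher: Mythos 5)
Your proposal is structurally sound and tracks the paper's proof closely for most of the relations, but it departs on the single computationally nontrivial step: showing that the bigon $\mergemor \circ \splitmor$ maps to $\tfrac{7}{3}\,\id_V$. The paper proves this by brute force: it observes, as you do, that Schur's lemma reduces the claim to a scalar, but then computes that scalar by picking the specific element $a = E_{11} - E_{22}$ and a specific orthonormal basis of $V$ built from octonion units and diagonal traceless matrices, and grinding out $\sum_b (a \circ b) \circ b^\vee = \tfrac{8}{3} a$ by hand. You instead propose to extract $\tfrac{7}{3}$ from the polarized Cayley--Hamilton identity \cref{mango}. That route does close: substituting $b \mapsto b$, $c \mapsto b^\vee$ into \cref{mango} and summing over $b \in \B_V$, the first and third terms on the left each give $\mu a$ (using both expressions in \cref{Gsplit}), the middle term $\sum_b \pi(\pi(b \circ b^\vee) \circ a)$ vanishes because $\sum_b \pi(b \circ b^\vee) \in V$ is $\fg$-invariant hence zero by Schur, and the right side collapses via $\sum_b \tr(b \circ b^\vee) = 26$ and dual-basis expansion to $\tfrac{1}{6}(26 + 1 + 1)\,a$, giving $2\mu = \tfrac{28}{6}$, i.e.\ $\mu = \tfrac{7}{3}$. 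This is cleaner and more conceptual than the paper's calculation --- and it makes transparent why $(\alpha,\delta) = (7/3,26)$ is forced once $\delta = \dim V$ is known --- an insight the paper only recovers indirectly, by combining \cref{prestige} with the argument in \cref{SUP}. Two small cautions on your write-up: your plan to ``pair with a generic $a$ under $B$'' is a detour; applying \cref{mango} directly as above is more efficient and avoids an intermediate reduction to $\Tr_A(L_a^2)$ that neither \cref{cedar} nor \cref{teleport} alone resolves. And your invocation of Schur's lemma should also explicitly cover the map $\C \to V$, $1 \mapsto \sum_b \pi(b \circ b^\vee)$, which is the vanishing that kills the middle term above; you only mention the $V \to \C$ direction in connection with $\lollydrop$.
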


\begin{proof}
    We must verify that $\Phi$ respects the defining relations in \cref{Tdef}.  For the third equality in \cref{vortex}, we first use \cref{teleport} to see that, for $a \in V$,
    \[
        \sum_{b \in \B_V} a \circ b \otimes b^\vee + a \otimes \tfrac{1}{3}
        = \sum_{b \in \B_V} b \otimes b^\vee \circ a + \tfrac{1}{3} \otimes a.
    \]
    Applying $\pi \otimes \pi$ yields
    \[
        \sum_{b \in \B_V} \pi(a \circ b) \otimes b^\vee
        = \sum_{b \in \B_V} b \otimes \pi(b^\vee \circ a).
    \]
    Thus, for $a \in V$, we have
    \[
        \Phi
        \left(
            \begin{tikzpicture}[anchorbase]
                \draw (-0.4,0.2) to[out=down,in=180] (-0.2,-0.2) to[out=0,in=225] (0,0);
                \draw (0,0) -- (0,0.2);
                \draw (0.3,-0.3) -- (0,0);
            \end{tikzpicture}
        \right)
        (a)
        =
        \sum_{b \in \B_V} b \otimes \pi (b^\vee \circ a)
        =
        \sum_{b \in \B_V} \pi(a \circ b) \otimes b^\vee
        =
        \Phi
        \left(
            \begin{tikzpicture}[anchorbase]
                \draw (0.4,0.2) to[out=down,in=0] (0.2,-0.2) to[out=180,in=-45] (0,0);
                \draw (0,0) -- (0,0.2);
                \draw (-0.3,-0.3) -- (0,0);
            \end{tikzpicture}
        \right)
        (a).
    \]
    This shows that $\Phi$ preserves the third equality in \cref{vortex} and that it satisfies \cref{Gsplit}.  The verification of the relations \cref{venom} and the first two equalities in \cref{vortex} are now straightforward.

    For the fourth equality in \cref{vortex}, we compute
    \[
        \Phi
        \left(
            \begin{tikzpicture}[centerzero]
                \draw (-0.2,-0.3) -- (-0.2,-0.1) arc(180:0:0.2) -- (0.2,-0.3);
                \draw (-0.3,0.3) \braiddown (0,-0.3);
            \end{tikzpicture}
        \right)
        (a \otimes b \otimes c)
        = B(a \otimes c) b
        = \Phi
        \left(
            \begin{tikzpicture}[centerzero]
                \draw (-0.2,-0.3) -- (-0.2,-0.1) arc(180:0:0.2) -- (0.2,-0.3);
                \draw (0.3,0.3) \braiddown (0,-0.3);
            \end{tikzpicture}
        \right)
        (a \otimes b \otimes c).
    \]

    The fact that $\Phi$ respects the first two relations in \cref{chess} follows immediately from the fact that $a \circ b = b \circ a$ for $a,b \in V$.  Now consider the third relation in \cref{chess}.      Since $V$ is an irreducible $\fg$-module, there exists $\alpha \in \C$ such that
    \[
        \Phi
        \left(
            \begin{tikzpicture}[centerzero]
                \draw  (0,-0.4) -- (0,-0.2) to[out=45,in=down] (0.15,0) to[out=up,in=-45] (0,0.2) -- (0,0.4);
                \draw (0,-0.2) to[out=135,in=down] (-0.15,0) to[out=up,in=-135] (0,0.2);
            \end{tikzpicture}
        \right)
        (a)
        = \alpha a
        \quad \text{for all } a \in V,
    \]
    and we must show that $\alpha = \tfrac{7}{3}$.  It suffices to show this for some specific choice of $a$, so we choose $a = E_{11} - E_{22}$.  Now, choose the basis
    \begin{align*}
        \B_V &= \{b_1 = \tfrac{1}{\sqrt{2}} E_{11} - \tfrac{1}{\sqrt{2}} E_{22},\, b_2 = \tfrac{1}{\sqrt{6}} E_{11} + \tfrac{1}{\sqrt{6}} E_{22} - \tfrac{2}{\sqrt{6}} E_{33}\} \sqcup \B_V',\quad \text{where}
        \\
        \B_V' &= \{ \tfrac{1}{\sqrt{2}}(x E_{ij} + \bar{x} E_{ji}) : 1 \le i < j \le 3,\ x \in \B_\OO\},
    \end{align*}
    and where $\B_\OO$ is the usual basis of unit octonions (in particular, $\bar{x} x = x \bar{x} = 1$ for $x \in \B_\OO$).  Then $\B_V$ is an orthonormal basis for $V$, that is, $b^\vee = b$ for all $b \in \B_V$.  Furthermore, for $b = \frac{1}{\sqrt{2}}(xE_{ij} + \bar{x}E_{ji})$, with $x \in \B_\OO$ and $1 \le i < j \le 3$, we have
    \[
        (a \circ b) \circ b^\vee
        = \tfrac{1}{2}(\delta_{i1} - \delta_{i2} - \delta_{j2}) b \circ b
        = \tfrac{1}{4}(\delta_{i1} - \delta_{i2} - \delta_{j2})(E_{ii} + E_{jj}).
    \]
    Therefore,
    \begin{multline*}
        \sum_{b \in \B_V} (a \circ b) \circ b^\vee
        = (a \circ b_1) \circ b_1 + (a \circ b_2) \circ b_2 + \sum_{b \in \B_V'} (a \circ b) \circ b
        \\
        = \tfrac{1}{2}(E_{11}-E_{22}) + \tfrac{1}{6}(E_{11}-E_{22})
        + 2 \sum_{i<j} (\delta_{i1}-\delta_{i2}-\delta_{j2}) (E_{ii}+E_{jj})
        = \tfrac{8}{3}a.
    \end{multline*}
    Thus we have
    \begin{multline*}
        \Phi
        \left(
            \begin{tikzpicture}[centerzero]
                \draw  (0,-0.4) -- (0,-0.2) to[out=45,in=down] (0.15,0) to[out=up,in=-45] (0,0.2) -- (0,0.4);
                \draw (0,-0.2) to[out=135,in=down] (-0.15,0) to[out=up,in=-135] (0,0.2);
            \end{tikzpicture}
        \right)
        (a)
        = \sum_{b \in \B_V} \pi \left( \pi(a \circ b) \circ b^\vee \right)
        \overset{\cref{blueberry}}{=} \sum_{b \in \B_V} \pi((a \circ b) \circ b^\vee) - \tfrac{1}{3} \sum_{b \in \B_V} B(a \otimes b) \pi(b^\vee)
        \\
        = \sum_{b \in \B_V} \pi((a \circ b) \circ b^\vee) - \tfrac{1}{3} \pi(a)
        = \tfrac{8}{3} a - \tfrac{1}{3} a
        = \tfrac{7}{3} a,
    \end{multline*}
    as desired.

    That $\Phi$ respects the fourth equality in \cref{chess} follows immediately from the fact that $\dim_\C V = 26$.  Since $\Phi \left( \lollydrop \right)$ is a homomorphism of $\fg$-modules from the trivial module to $V$, it must be zero by Schur's lemma, proving that $\Phi$ respects the fifth equality in \cref{chess}.
\end{proof}

For a linear category $\cC$, let $\Kar(\cC)$ denote its additive Karoubi envelope.  Thus, objects of $\Kar(\cC)$ are pairs $(X,e)$, where $X$ is an object in the additive envelope $\Add(\cC)$ of $\cC$, and $e \in \cC(X,X)$ is an idempotent endomorphism.  Morphisms in $\Kar(\cC)$ are given by
\[
    \Kar(\cC) \big( (X,e),(X',e') \big) = e' \Add(\cC)(X,X') e.
\]
Composition is as in $\cC$.

\begin{prop} \label{FunctorFull}
    The functor $\Phi$ is full.
\end{prop}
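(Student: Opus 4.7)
The plan is to reduce the problem using the pivotal structure on both sides.  Since $\Phi$ is a pivotal monoidal functor (it sends the cup and cap to the bilinear form $B$ and its dual, which makes $V$ symmetrically self-dual in $\fg\md$), the natural isomorphism \cref{twirl} is matched, under $\Phi$, with the analogous isomorphism $\Hom_\fg(V^{\otimes m}, V^{\otimes n}) \cong \Hom_\fg(V^{\otimes(m+n)}, \C)$ in $\fg\md$.  Fullness of $\Phi$ therefore reduces to surjectivity of
\[
    \Phi \colon \Tcat(\go^{\otimes k}, \one) \twoheadrightarrow \Hom_\fg(V^{\otimes k}, \C)
\]
for every $k \in \N$.

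Next, I would interpret the right-hand side as the space of $G$-invariant multilinear forms on $V$, using that $G$ is connected so $\fg$- and $G$-invariants coincide.  A direct computation (using the expression for $\triform$ in \cref{topsy} together with \cref{cedar}) gives $\Phi(\capmor) = B$ and $\Phi(\triform) = T$, where $T(a,b,c) = \tr((a \circ b) \circ c)$.  More generally, any diagram in $\Tcat(\go^{\otimes k}, \one)$ built from $\capmor$, $\mergemor$, and crossings maps under $\Phi$ to the ``graph contraction'' of $B$'s along internal edges and $T$'s at trivalent vertices; the image of $\Phi$ is therefore precisely the $\C$-span of all such trivalent graph contractions on $V^{\otimes k}$.

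The substantive step is then to prove that these contractions span \emph{all} $G$-invariant multilinear forms on $V$ — a first fundamental theorem of invariant theory for $F_4$ acting on $V$.  I would approach this either by invoking the diagrammatic computations of exceptional invariant tensors in \cite{Cvi08,MT07,Wen03,Wes03}, or by an induction on $k$ exploiting semisimplicity of $\fg\md$ and the decompositions \cref{2decomp,3decomp} to match dimensions.  The low-rank cases are immediate: $\dim \Hom_\fg(V^{\otimes k}, \C) = 1, 0, 1, 1$ for $k = 0, 1, 2, 3$ respectively, with bases supplied by the empty diagram, (trivially), $\capmor$, and $\triform$.

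The main obstacle will be the FFT for $F_4$ itself.  Unlike the classical types, where Weyl's theorem handles everything with $B$-contractions alone, the $F_4$ case genuinely requires the trilinear invariant $T$ coming from the nonassociative Jordan product on the Albert algebra.  Ruling out exotic $G$-invariants not expressible as trivalent contractions — and doing so in a way that does not itself implicitly invoke the conclusion — is the technical crux.
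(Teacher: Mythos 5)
Your reduction of fullness to surjectivity onto $\Hom_\fg(V^{\otimes k}, \C)$ is correct, and your description of the image of $\Phi$ as the span of trivalent graph contractions of $B$ and $T$ is accurate. But the proposal has a genuine gap, one you yourself flag: you reduce the problem to a first fundamental theorem of invariant theory for $F_4$ acting on $V$, and then do not prove it. Saying you ``would approach this either by invoking [references] or by an induction on $k$'' is not a proof; in particular, an induction on $k$ via \cref{2decomp,3decomp} breaks down immediately, since knowing that the dimensions match at $k=4,5$ does not tell you the graph contractions are linearly independent without already having a spanning/basis statement, which is exactly the thing to be proved. The references you gesture at (\cite{Cvi08,MT07,Wen03,Wes03}) compute invariants but do not supply a usable FFT off the shelf; if they did, you would need to quote the precise statement and check it matches.

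The paper avoids proving the FFT head-on by a Tannakian argument in the style of \cite[Th.~5.1]{Kup96}. Passing to $\Kar(\Tcat_{7/3,26})$, the image $\cC$ of the induced functor is a rigid symmetric monoidal subcategory of $G$-mod. There is a conjugate-linear contravariant monoidal endofunctor $\Xi$ of the source (reflect diagrams, conjugate coefficients), and $\Phi$ intertwines $\Xi$ with hermitian adjoint, so $\End_\cC(V^{\otimes n})$ is $*$-closed. The Doplicher--Roberts theorem \cite[Th.~6.1]{DR89} (a categorical Krein theorem) then identifies $\cC$ with $\text{Rep}(H)$ for some compact group $H$. All morphisms in $\cC$ are $G$-equivariant, so $G \subseteq H$; but $G$ is by definition the group of automorphisms of $V$ preserving $\Phi(\mergemor)$ and $\Phi(\capmor)$, forcing $H = G$, hence fullness. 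This is precisely the mechanism that makes your ``technical crux'' go away: rather than exhibiting a spanning set of invariants, one shows the image is a $*$-closed tensor subcategory and lets reconstruction do the work. If you wish to salvage your line of argument, you would need to either import an FFT for $F_4$ with a precise citation, or replace the FFT step with this kind of Tannaka--Krein reasoning.
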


\begin{proof}
    We follow the method of the proof of \cite[Th.~5.1]{Kup96}.  Since the category $\fg$-mod is idempotent complete, we have an induced functor
    \begin{equation} \label{river}
        \Kar(\Tcat_{7/3,26}) \to \fg\md.
    \end{equation}
    Let $\cC$ be the image of this functor.  Then $\cC$ is a rigid symmetric monoidal category.  Now consider the conjugate-linear contravariant monoidal endofunctor $\Xi$ of $\Fcat$ determined on objects by $\go \mapsto \go$ and on morphisms by
    \[
        \mergemor \mapsto \splitmor,\quad
        \crossmor \mapsto \crossmor,\quad
        \cupmor \mapsto \capmor,\quad
        \capmor \mapsto \cupmor.
    \]
    Intuitively, $\Xi$ is given by reflecting diagrams in the vertical axis and taking the complex conjugate of all coefficients.  Then $\Phi$ intertwines $\Xi$ with the hermitian adjoint, and so it follows that $\End_\cC(V^{\otimes n})$ is closed under hermitian adjoint.  Hence $\cC$ satisfies the hypotheses of \cite[Th.~6.1]{DR89}, which is a category-theoretic analogue of Krein's theorem \cite[p.~177]{Kir76}.  Therefore, $\cC$ is the category of finite-dimensional representations of some compact group $H$.  Since all morphisms of $\cC$ are homomorphisms of $G$-modules, we have $G \subseteq H$.  On the other hand, $G$ is precisely the group of automorphisms of $V$ preserving $\Phi(\mergemor)$ and $\Phi(\capmor)$.  Thus $G=H$ and so \cref{river} is full.  Viewing $\Tcat$ as a full subcategory of $\Kar(\Tcat)$ in the usual way, we conclude that $\Phi$ is full.
\end{proof}

\begin{theo} \label{baja}
    The functor $\Phi$ from \cref{magneto} factors through $\Fcat_{7/3, 26}$.
\end{theo}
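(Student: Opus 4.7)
The plan is to apply \cref{demayo} to the tensor ideal $\cI = \ker(\Phi)$ of $\Tcat_{7/3,26}$, consisting of all morphisms $f$ with $\Phi(f) = 0$. Because $\Phi$ is a strict $\C$-linear monoidal functor, this collection is closed under pre- and post-composition and under tensoring with arbitrary morphisms, hence it is a tensor ideal. To conclude that $\Phi$ factors through $\Fcat_{7/3,26}$, it will suffice to show that the defining relations \cref{magic}, \cref{sqburst}, and \cref{pentburst} hold in the quotient $\Tcat_{7/3,26}/\cI$, because $\Phi$ then automatically descends via the universal property of the quotient category.

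The first task is to verify the hypotheses of \cref{demayo}. The parameter $\delta = 26$ avoids the excluded values $\{-2,2,6,10\}$. For the dimension conditions, I would invoke \cref{FunctorFull}: since $\Phi$ is full, the induced map
\[
    (\Tcat_{7/3,26}/\cI)(\go^{\otimes m}, \go^{\otimes n}) \to \Hom_\fg(V^{\otimes m}, V^{\otimes n})
\]
is an isomorphism of $\C$-vector spaces for all $m, n \in \N$. Combined with the dimension calculations in \cref{measure}, this gives
\[
    \dim\bigl( (\Tcat_{7/3,26}/\cI)(\go^{\otimes 2}, \go^{\otimes 2}) \bigr) = 5
    \quad \text{and} \quad
    \dim\bigl( (\Tcat_{7/3,26}/\cI)(\go^{\otimes 3}, \go^{\otimes 2}) \bigr) = 15,
\]
which matches (and in particular is at most) the bounds required by \cref{demayo}.

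Applying \cref{demayo} then yields that \cref{magic}, \cref{sqburst}, and \cref{pentburst} all hold in $\Tcat_{7/3,26}/\cI$. Equivalently, $\Phi$ sends both sides of each of these three relations to the same morphism in $\fg$-mod, which is exactly the statement that $\Phi$ factors through $\Fcat_{7/3,26}$.

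I do not anticipate a genuine obstacle here: essentially all of the work has been pre-packaged into \cref{demayo} and \cref{FunctorFull}. A more hands-on alternative would be to verify directly that $\Phi$ sends both sides of each relation to the same $\fg$-module homomorphism. Relation \cref{magic} corresponds precisely to the Cayley--Hamilton identities \cref{boysenberry,mango} recorded in \cref{raspberry}, so that case could be checked by explicit computation. However, verifying \cref{sqburst} and \cref{pentburst} by hand would require substantial calculation inside the Albert algebra, whereas the route through \cref{demayo} reduces everything to the abstract dimension count \cref{measure} in $\fg$-mod.
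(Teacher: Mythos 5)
Your proposal is correct and follows essentially the same route as the paper: set $\cI = \ker(\Phi)$, use fullness (\cref{FunctorFull}) together with the dimension count \cref{measure} to verify the hypotheses \cref{cinco} of \cref{demayo}, and conclude. The paper's proof is just a terser version of exactly this argument.
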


\begin{proof}
    Let $\cI$ be the kernel of the functor $\Phi$.  Then $\cI$ is a tensor ideal of $\Tcat$, and we must show that the relations \cref{magic,sqburst,pentburst} are satisfied in $\Tcat/\cI$.  By \cref{FunctorFull,measure}, it follows that condition \cref{cinco} is satisfied.  Then the theorem follows from \cref{demayo}.
\end{proof}

\begin{rem} \label{prestige}
    It is also possible to give a more direct proof that the image of \cref{magic} under $\Phi$ holds in $\fg$-mod.  Bending the top right endpoint to the bottom of the diagrams by tensoring on the right with $\go$ and attaching a cup to the two rightmost endpoints at the top of the diagram (an operation which is invertible by the first relation in \cref{vortex}), we see that \cref{magic} is equivalent to
    \[
        \begin{tikzpicture}[anchorbase]
            \draw (-0.4,-0.4) -- (0,0) -- (0,0.3);
            \draw (0,-0.4) -- (-0.2,-0.2);
            \draw (0.4,-0.4) -- (0,0);
        \end{tikzpicture}
        +
        \begin{tikzpicture}[anchorbase]
            \draw (-0.4,-0.4) -- (0,0) -- (0,0.3);
            \draw (0,-0.4) -- (0.2,-0.2);
            \draw (0.4,-0.4) -- (0,0);
        \end{tikzpicture}
        +
        \begin{tikzpicture}[anchorbase]
            \draw (-0.4,-0.4) -- (0,0) -- (0,0.3);
            \draw (0.4,-0.4) -- (-0.2,-0.2);
            \draw (0,-0.4) -- (0.2,-0.2) -- (0,0);
        \end{tikzpicture}
        = \frac{1}{6}
        \left(\,
            \begin{tikzpicture}[anchorbase]
                \draw (0,-0.4) -- (0,-0.3) arc(180:0:0.2) -- (0.4,-0.4);
                \draw (-0.4,-0.4) \braidup (0,0.3);
            \end{tikzpicture}
            +
            \begin{tikzpicture}[anchorbase]
                \draw (0,-0.4) -- (0,-0.3) arc(0:180:0.2) -- (-0.4,-0.4);
                \draw (0.4,-0.4) \braidup (0,0.3);
            \end{tikzpicture}
            +
            \begin{tikzpicture}[centerzero]
                \draw (-0.2,-0.3) -- (-0.2,-0.1) arc(180:0:0.2) -- (0.2,-0.3);
                \draw (-0.3,0.3) \braiddown (0,-0.3);
            \end{tikzpicture}
            \,
        \right).
    \]
    The fact that $\Phi$ respects this relation follows from \cref{mango} and the fact that the operation $\circ$ on $A$ is commutative.  Hence, we see that \cref{magic} essentially corresponds to the Cayley--Hamilton theorem for the Albert algebra; see the proof of \cref{raspberry}.
\end{rem}

Since the category $\fg$-mod is idempotent complete, we have an induced functor
\begin{equation} \label{lake}
    \Kar(\Phi) \colon \Kar \left( \Fcat_{7/3,26} \right) \to \fg\md.
\end{equation}

\begin{prop} \label{splat}
    The functor $\Kar(\Phi)$ of \cref{lake} is full and essentially surjective.
\end{prop}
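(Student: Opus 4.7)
The plan is to extract both statements from results already in hand, with no essentially new ingredient required. For fullness, I would invoke \cref{baja} to view $\Phi$ as a functor $\Fcat_{7/3,26} \to \fg\md$, which is full by \cref{FunctorFull}. This propagates to $\Kar(\Phi)$ via a formal check: given a $\fg$-module morphism $g \colon \Kar(\Phi)(X, e) \to \Kar(\Phi)(X', e')$, we have $g = \Phi(e') \, g \, \Phi(e)$ when viewed as an element of $\Hom_\fg(\Phi(X), \Phi(X'))$; taking a preimage $f \in \Add(\Fcat_{7/3,26})(X, X')$ of $g$ via fullness of $\Add(\Phi)$, the composite $e' f e$ is a morphism $(X, e) \to (X', e')$ in $\Kar(\Fcat_{7/3,26})$ whose image under $\Kar(\Phi)$ is $g$.

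For essential surjectivity, the central observation is that the Tannaka--Krein argument already used in the proof of \cref{FunctorFull} supplies this statement. Recall that there one identifies the essential image $\cC$ of $\Kar(\Tcat_{7/3,26}) \to \fg\md$ with the category of finite-dimensional representations of some compact group $H$ with $G \subseteq H$, and then uses that $G$ is the simultaneous stabilizer of $\Phi(\mergemor)$ and $\Phi(\capmor)$ to conclude $H = G$, hence $\cC \simeq \fg\md$. By \cref{baja}, the composite $\Kar(\Tcat_{7/3,26}) \to \fg\md$ factors through $\Kar(\Fcat_{7/3,26}) \to \fg\md$, so the latter functor has essential image equal to $\fg\md$ as well, giving essential surjectivity of $\Kar(\Phi)$.

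The main obstacle here is conceptual rather than technical: one must recognize that the proof of \cref{FunctorFull} is in fact establishing essential surjectivity along the way, since the Doplicher--Roberts-style theorem it invokes identifies the image with a full representation category, and this identification already encompasses all of $\fg\md$ once $H = G$ is verified. A more elementary route would use \cref{2decomp,3decomp} to exhibit each fundamental module as a summand of a small tensor power of $V$ (namely $V_{\omega_4} = V$, $V_{\omega_1}, V_{\omega_3} \subseteq V^{\otimes 2}$, and $V_{\omega_2} \subseteq V^{\otimes 3}$) and then combine with the explicit idempotents to be constructed in \cref{sec:fundamental}; the downside is that one would then have to lift projections from the semisimple algebra $\End_\fg(V^{\otimes k})$ to $\End_\Fcat(\go^{\otimes k})$, which the Tannakian approach sidesteps entirely.
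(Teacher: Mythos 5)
Your fullness argument is correct and matches the paper, which simply cites \cref{FunctorFull}. For essential surjectivity, your main route is a genuinely different path from the paper's: you read the Doplicher--Roberts argument from the proof of \cref{FunctorFull} as already identifying the essential image of $\Kar(\Tcat_{7/3,26}) \to \fg\md$ with all of $\mathrm{Rep}(G)$, and then transfer this to $\Kar(\Phi)$ via the factorization from \cref{baja}. The paper instead carries out precisely the ``more elementary route'' you sketch at the end: it uses \cref{2decomp} and the tables of \cite{MPR90} to show that each fundamental module $V_{\omega_i}$ is a direct summand of $V^{\otimes 2}$ or $V^{\otimes 3}$, and that every $V_\lambda$ is a summand of a tensor product of fundamental modules, hence of a tensor power of $V$, which lies in the image of $\Kar(\Phi)$. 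Your Tannakian route is arguably tighter in that it delivers fullness and essential surjectivity in a single stroke from the identification of the image with $\mathrm{Rep}(H) = \mathrm{Rep}(G)$, granted that the image category satisfies the hypotheses of \cite[Th.~6.1]{DR89} in the way the proof of \cref{FunctorFull} asserts. What the paper's elementary route buys is the concrete information it reuses immediately in \cref{sec:fundamental}, where explicit idempotents $e_1, e_3, \tilde e$ cutting out $V_{\omega_1}$, $V_{\omega_3}$, $V_{2\omega_4}$ are produced (\cref{hacky}). Your caution about lifting idempotents through the surjection $\End_\Fcat(\go^{\otimes k}) \twoheadrightarrow \End_\fg(V^{\otimes k})$ is a fair observation; the paper treats closure of the essential image under retracts as immediate from idempotent-completeness of $\Kar(\Fcat)$ together with fullness, which is exactly the step your Tannakian argument circumvents.
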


\begin{proof}
    Fullness follows from \cref{FunctorFull}, so it remains to show that $\Kar(\Phi)$ is essentially surjective.  If $\lambda = \sum_{i=1}^4 \lambda_i \omega_i$, with $\lambda_i \in \Z_{\ge 0}$, then $V_\lambda$ is the submodule of $\bigotimes_{i=1}^4 V_{\omega_i}^{\otimes \lambda_i}$ generated by the one-dimensional $\lambda$ weight space.  Since the category $\fg$-mod is semisimple, this implies that $V_\lambda$ is a direct summand of $\bigotimes_{i=1}^4 V_{\omega_i}^{\otimes \lambda_i}$.  Therefore, it suffices to show that the image of $\Kar(\Phi)$ contains the fundamental representations $V_{\omega_i}$ for $i \in \{1,2,3,4\}$.  We see from \cref{2decomp} that $V_{\omega_1}$ and $V_{\omega_3}$ are contained in $V_{\omega_4}^{\otimes 2}$.  It also follows from \cite[Ch.~11,~Table~7]{MPR90} that $V_{\omega_2}$ is contained in $V_{\omega_3} \otimes V_{\omega_4}$.
\end{proof}

\begin{cor} \label{SW}
    We have a surjective algebra homomorphism
    \[
        \Fcat_{7/3,26}(\go^{\otimes k}, \go^{\otimes k})
        \twoheadrightarrow \End_\fg(V^{\otimes k}),\quad k \in \N.
    \]
\end{cor}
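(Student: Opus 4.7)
The plan is to derive the corollary directly from the two main inputs we already have: the fullness of $\Phi$ from \cref{FunctorFull} and the factorization of $\Phi$ through $\Fcat_{7/3,26}$ established in \cref{baja}. First, by \cref{baja}, the functor $\Phi$ descends along the quotient $\Tcat_{7/3,26} \to \Fcat_{7/3,26}$ to a monoidal functor $\bar{\Phi}\colon \Fcat_{7/3,26} \to \fg\md$ sending $\go$ to $V$. Since the quotient functor is the identity on objects and surjective on morphism spaces, the fullness of $\Phi$ given by \cref{FunctorFull} immediately transfers to fullness of $\bar{\Phi}$.

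Then I would specialize $\bar\Phi$ to the endomorphism space at $\go^{\otimes k}$. Fullness of $\bar\Phi$ gives a surjective $\kk$-linear map
\[
    \bar\Phi_k\colon \Fcat_{7/3,26}(\go^{\otimes k}, \go^{\otimes k}) \twoheadrightarrow \Hom_\fg(V^{\otimes k}, V^{\otimes k}) = \End_\fg(V^{\otimes k}).
\]
Because $\bar\Phi$ is a monoidal functor, it preserves composition and identities, so $\bar\Phi_k$ respects the algebra multiplication (composition) and sends $1_{\go^{\otimes k}}$ to $1_{V^{\otimes k}}$. Hence $\bar\Phi_k$ is the desired surjective algebra homomorphism. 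There is no real obstacle: the entire content of the corollary is contained in the observation that a full $\kk$-linear (monoidal) functor restricts to surjective algebra homomorphisms on endomorphism algebras, and both fullness of $\Phi$ and the factorization through $\Fcat_{7/3,26}$ have already been established in \cref{FunctorFull,baja}.
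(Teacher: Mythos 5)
Your proof is correct and follows essentially the same reasoning the paper intends: the corollary is stated immediately after \cref{splat}, whose proof deduces fullness of $\Kar(\Phi)$ from \cref{FunctorFull}, and your derivation via \cref{baja} plus the surjectivity of the quotient functor $\Tcat \to \Fcat$ is precisely how fullness descends to $\Fcat$. Your added observation that a full monoidal functor restricts to a surjective algebra homomorphism on endomorphism algebras is exactly the content the paper leaves implicit.
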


\Cref{SW} implies that the endomorphism algebras of $\Fcat$ play the role in type $F_4$ that the group algebra of the symmetric group (or the oriented Brauer algebras if one includes the dual of the natural module) plays in type $A$ and that the Brauer algebras play in types $BCD$.

We conclude this section with some conjectures.

\begin{conj} \label{faithful}
    The functor $\Kar(\Phi)$ is faithful, and hence is an equivalence of categories.
\end{conj}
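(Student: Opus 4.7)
The plan is to prove faithfulness by producing, for each $n$, an explicit spanning set of $\Fcat(\one, \go^{\otimes n})$ whose cardinality equals $\dim \Hom_\fg(\C, V^{\otimes n})$. Since $\Fcat$ is strict pivotal, \cref{twirl} reduces the faithfulness of $\Kar(\Phi)$ on every morphism space $\Fcat(\go^{\otimes m}, \go^{\otimes n})$ to faithfulness on the invariant spaces $\Fcat(\one, \go^{\otimes n})$, and this extends to $\Kar(\Fcat)$ because passing to the idempotent completion does not alter hom-spaces between objects already present in the base category. Essential surjectivity and fullness of $\Kar(\Phi)$ were established in \cref{FunctorFull,splat}, so faithfulness is the only missing ingredient.

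The first step I would carry out is a diagrammatic rewriting argument in the spirit of Kuperberg's analysis of $G_2$ in \cite{Kup94,Kup96}. The relations in \cref{chess}, together with \cref{triangle}, \cref{sqburst} and \cref{pentburst}, collectively remove bubbles, lollipops, triangles, squares and pentagons from any closed trivalent diagram, while \cref{magic} trades crossings for planar sums. Iterating these local moves should reduce every morphism in $\Fcat(\one, \go^{\otimes n})$ to a $\kk$-linear combination of planar trivalent graphs whose every internal face has at least six edges. These "large-girth" graphs should coincide with the conjectural spanning sets in \cref{pipe}; proving that the iteration terminates and produces a genuine spanning set is the core combinatorial content.

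The second step is to match the size of this spanning set with $\dim \Hom_\fg(\C, V^{\otimes n})$, which equals the multiplicity of the trivial module in $V^{\otimes n}$. This multiplicity can be computed by iterating the fusion rules in the representation ring of $\fg$, extending \cref{2decomp,3decomp}, using the Weyl character formula or the tables in \cite{MPR90}. Once the numerical match is established, the already-known fullness of $\Kar(\Phi)$ forces the spanning set to be a basis and its image to be linearly independent, giving faithfulness and hence, combined with \cref{splat}, an equivalence of categories.

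The hard part will be the rewriting/confluence step. Although \cref{magic,sqburst,pentburst} suffice to eliminate every cycle of length at most five, it is not a priori clear that no further "hexagonal" identities hold among the surviving large-girth diagrams, nor that the reduction procedure has a canonical form. Compared to the $G_2$ case, the problem is aggravated by the presence of higher multiplicities in $V^{\otimes 3}$ (see \cref{3decomp}), which allows for a richer family of independent hexagonal trivalent patterns that must be enumerated and shown to already span. An alternative route would be to invoke an $F_4$ analogue of the first and second fundamental theorems of classical invariant theory, reducing the problem to a statement purely about $G$-invariant tensors on the Albert algebra; but no such classification appears in the literature, which is presumably why the present statement is left as a conjecture.
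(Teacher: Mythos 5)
The statement you are addressing is an open conjecture in the paper; the authors give no proof, so there is nothing to compare against in that sense. Your proposal is a strategy sketch rather than a proof, and you honestly flag the rewriting/confluence issue as the hard part. However, there is a concrete and fatal obstruction to the plan as written.

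Your Step 2 asks to count the girth-$\ge 6$ component-planar graphs and show the count equals $\dim \Hom_\fg(\C, V^{\otimes n})$, so that fullness would upgrade the spanning set to a basis. The paper itself rules this out: the remark immediately following \cref{pipe} explains that an earlier version conjectured these graphs form a basis, but B.~Westbury's computer calculations showed that for $m+n \ge 8$ the number of such graphs strictly exceeds $\dim \Hom_\fg(V^{\otimes m}, V^{\otimes n})$. That is precisely why \cref{pipe} was downgraded from a basis conjecture to a spanning conjecture. Thus the numerical match you need in Step 2 fails, and the conclusion ``spanning set $\Rightarrow$ basis $\Rightarrow$ linearly independent image $\Rightarrow$ faithful'' collapses. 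You do gesture at this danger when you worry about further ``hexagonal'' identities, but you then proceed as though the count would come out right, which the paper explicitly says it does not. Any successful route along these lines would have to identify the additional linear dependencies among the girth-$\ge 6$ diagrams (not implied by \cref{magic,sqburst,pentburst}), verify that they hold in $\Fcat$ rather than only in the image, and only then attempt a dimension count against a genuinely independent subfamily. The reduction to invariant spaces via \cref{twirl} and the passage from $\Fcat$ to $\Kar(\Fcat)$ are fine; the counting step is the gap.
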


\begin{rem} \label{hesitant}
    A weaker form of \cref{faithful} would be that the kernel of $\Kar(\Phi)$ consists of negligible morphisms.  Then the functor $\Kar(\Phi)$ would induce an equivalence of categories between $\fg$-mod and the semisimplification of $\Fcat_{7/3,26}$, which is the quotient of $\Fcat_{7/3,26}$ by the tensor ideal of negligible morphisms.
\end{rem}

A string diagram built from the generating morphisms \cref{lego} via tensor product and composition can be viewed as a graph.  Here we view $\mergemor$ as a trivalent vertex and $\crossmor$ as two edges (that is, we do \emph{not} view the crossing as a vertex).  We say such a graph is \emph{component-planar} if its connected components are planar graphs.  For example, \cref{bigfive} is a complete list of the $5$ acyclic component-planar graphs $\go^{\otimes 2} \to \go^{\otimes 2}$, and \cref{brutal} is a complete list of the $15$ acyclic component-planar graphs $\go^{\otimes 2} \to \go^{\otimes 3}$.

\begin{conj} \label{pipe}
    For $m,n \in \N$, the component-planar graphs whose cycles are all of length at least six span $\Fcat(\go^{\otimes m}, \go^{\otimes n})$.
\end{conj}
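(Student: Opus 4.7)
The plan is to prove \cref{pipe} by induction on a complexity measure of string diagrams, using the defining relations as rewriting rules to eliminate short cycles and non-planarity from each connected component. Any morphism in $\Fcat(\go^{\otimes m}, \go^{\otimes n})$ is by definition a linear combination of string diagrams built from the generators in \cref{lego}, so it suffices to bring each such diagram into a linear combination of component-planar graphs whose cycles all have length at least six.

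First I would collect the local rewriting rules furnished by the relations. A length-one cycle ($\lolly$ or $\lollydrop$) vanishes by the last equality in \cref{chess}; a length-two cycle is removed via the third equality of \cref{chess}, together with its fourth equality for free-floating bubbles; a length-three cycle $\trimor$ is replaced by a single trivalent vertex via \cref{triangle}; a length-four cycle $\sqmor$ is rewritten via \cref{sqburst} as a linear combination of $\jail$, $\hourglass$, $\Hmor$, $\Imor$, and $\crossmor$, each of which is acyclic with at most two trivalent vertices; and a length-five cycle $\pentmor$ is rewritten via \cref{pentburst} as a linear combination of diagrams with at most three trivalent vertices and no cycles of length exceeding two. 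The induction then proceeds on the lexicographic tuple $\mu(D) = (c_5(D), c_4(D), c_3(D), c_2(D), c_1(D), V(D))$, where $c_i(D)$ counts the embedded $i$-cycles in the underlying graph of $D$ and $V(D)$ is its number of trivalent vertices, prioritizing elimination of the longest short cycle first. Since each rewrite applied to an $i$-cycle produces only cycles of length strictly less than $i$ in the locally modified region, the tuple $\mu$ should strictly decrease in lexicographic order at each step, and the reduction terminates on a diagram with no cycles of length at most five. Component-planarity of the final representatives would then be enforced by further use of the symmetric-monoidal relations in \cref{venom} to reroute strands across trivalent vertices and, where necessary, a rearrangement of \cref{sqburst} to resolve crossings between two edges lying in the same connected component.

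The main obstacle will be justifying this termination argument rigorously. A local rewrite may interact with surrounding structure to produce cycles that are short globally but invisible in the local neighborhood of the rewrite, so the tuple $\mu$ need not in fact decrease monotonically at every step. Establishing that the rewriting system is Noetherian and locally confluent, in the spirit of Newman's lemma, will likely mirror the skein-theoretic classification arguments used for rank-two trivalent categories by Kuperberg \cite{Kup96} and by Morrison--Peters--Snyder \cite{MPS17}. A secondary difficulty is ensuring genuine component-planarity of the reduced diagrams, since high-girth trivalent graphs can still fail to be planar abstractly (for instance the Heawood graph has girth six but is non-planar); excluding such exotic components from the reduction tree will require an additional argument, most plausibly by exploiting the strict planarity of the generating string diagrams together with an induction on the total vertex count.
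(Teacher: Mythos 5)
The statement you are trying to prove is labelled a \emph{conjecture} in the paper, and no proof is given there: \cref{pipe} is explicitly left open, and the authors even record that a stronger version of it (asserting the given graphs form a \emph{basis}, not merely a spanning set) was refuted by computer calculation due to B.~Westbury, who observed that for $m+n \ge 8$ the number of component-planar graphs of girth at least six exceeds $\dim \Fcat(\go^{\otimes m},\go^{\otimes n})$. So there is no proof in the paper against which to compare your argument; you are attacking a genuinely open problem.

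That said, your proposed strategy is the natural one, and your own diagnosis of where it breaks down is accurate and is precisely why the statement remains conjectural. The local rewriting rules you list are correct in outline: the last two relations in \cref{chess} kill $1$- and $2$-cycles, \cref{triangle} reduces $3$-cycles, and \cref{sqburst,pentburst} reduce squares and pentagons to acyclic pieces plus shorter cycles. But your lexicographic measure $\mu(D)$ need not decrease, for exactly the reason you name: \cref{sqburst,pentburst} are local replacements inside a disk, and when that disk is glued back into the ambient diagram the new internal edges can close up with external ones to form $3$-, $4$-, or $5$-cycles that did not exist before the rewrite. Controlling this interaction is the entire content of a Kuperberg-style confluence argument, and nothing in the shape of \cref{sqburst,pentburst} alone guarantees termination; one would have to check every way a pentagon or square can share boundary with the rest of the diagram, and the paper does not carry this out. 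Your second concern is equally real: the relations \cref{venom} only slide crossings past trivalent vertices and cups/caps, which is enough to push all crossings to lie between distinct components but not to rule out an abstractly non-planar high-girth component such as the Heawood graph arising during reduction; resolving such a component would require an additional argument (the crossing-elimination afforded by \cref{sqburst} when $\delta \ne 2$ reintroduces cycles and so reopens the termination question). In short, the approach is reasonable, the two gaps you flag are the genuine obstructions, and neither is closed in the paper — which is why the statement is a conjecture rather than a theorem.
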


An earlier version of this paper contained a stronger version of \cref{pipe}, conjecturing that the given graphs form a \emph{basis} of the endomorphism spaces.  However, it was pointed out to us by B.~Westbury that computer calculations show that, for $m+n \ge 8$, the number of such graphs is larger than the dimension of the corresponding endomorphism space.

\section{Fundamental modules\label{sec:fundamental}}

Our goal in this final section is to describe the objects in $\Kar(\Fcat)$ sent, under the functor $\Kar(\Phi)$ of \cref{lake}, to the four fundamental $\fg$-modules.  Some of our intermediate results will be valid for more general $\alpha$ and $\delta$.  However, throughout this section we suppose that
\[
    \delta \notin \{-10,-2,0\}.
\]
We continue to work over the field $\kk=\C$.

Recall the definition of the antisymmetrizers in \cref{boxes}.  Let
\begin{gather*}
    e_0 = \frac{1}{\delta}\, \hourglass,\qquad
    e_1 =  \frac{8}{\delta+10}
    \left(
        \begin{tikzpicture}[centerzero]
            \draw (-0.2,-0.5) -- (-0.2,0.5);
            \draw (0.2,-0.5) -- (0.2,0.5);
            \antbox{-0.3,-0.1}{0.3,0.1};
        \end{tikzpicture}
        + \frac{\delta+2}{4\alpha}\,
        \begin{tikzpicture}[anchorbase]
            \draw (-0.2,-0.4) -- (-0.2,0.6);
            \draw (0.2,-0.4) -- (0.2,0.6);
            \draw (-0.2,0.35) -- (0.2,0.35);
            \antbox{-0.3,-0.1}{0.3,0.1};
        \end{tikzpicture}
    \right)
    ,\\
    e_3 = \frac{\delta+2}{\delta+10}
    \left(
        \begin{tikzpicture}[centerzero]
            \draw (-0.2,-0.5) -- (-0.2,0.5);
            \draw (0.2,-0.5) -- (0.2,0.5);
            \antbox{-0.3,-0.1}{0.3,0.1};
        \end{tikzpicture}
        - \frac{2}{\alpha}\,
        \begin{tikzpicture}[anchorbase]
            \draw (-0.2,-0.4) -- (-0.2,0.6);
            \draw (0.2,-0.4) -- (0.2,0.6);
            \draw (-0.2,0.35) -- (0.2,0.35);
            \antbox{-0.3,-0.1}{0.3,0.1};
        \end{tikzpicture}
    \right)
    ,\qquad
    e_4 = \frac{1}{\alpha} \Imor
    ,\qquad
    \tilde{e} =
    \begin{tikzpicture}[centerzero]
        \draw (-0.15,-0.35) -- (-0.15,0.35);
        \draw (0.15,-0.35) -- (0.15,0.35);
        \symbox{-0.25,-0.1}{0.25,0.1};
    \end{tikzpicture}
    - \frac{1}{\delta}\, \hourglass - \frac{1}{\alpha}\, \Imor.
\end{gather*}

\begin{lem} \label{sponge}
    We have $ef,fe \in \kk e$ for all
    \[
        f \in \left\{ \jail\, ,\ \hourglass,\ \crossmor,\ \Imor,\ \Hmor \right\},
        \qquad
        e \in \{e_0,e_1,e_3,e_4,\tilde{e}\}.
    \]
\end{lem}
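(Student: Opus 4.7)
The plan is to prove the lemma by establishing that $e_0, e_1, e_3, e_4, \tilde{e}$ form a complete system of pairwise orthogonal idempotents summing to $\jail$ in $\Fcat(\go^{\otimes 2}, \go^{\otimes 2})$. Granting this, every element $f$ of the algebra expands uniquely as $f = \sum_i c_i^{(f)} e_i$, so $f e_j = c_j^{(f)} e_j \in \kk e_j$ and $e_j f = c_j^{(f)} e_j \in \kk e_j$ by orthogonality; this is exactly the conclusion of the lemma for the five generators listed.

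The first step is to verify $\sum_i e_i = \jail$ and to express the five basis elements in the span of the $e_i$. Direct inspection of the definitions shows that the $\Hmor$-contributions in $e_1$ and $e_3$ cancel, so $e_1 + e_3$ equals the two-strand antisymmetrizer from \cref{boxes}, i.e.\ $\tfrac{1}{2}(\jail - \crossmor)$; and $\tilde{e}$ is defined precisely so that $e_0 + e_4 + \tilde{e}$ equals the two-strand symmetrizer $\tfrac{1}{2}(\jail + \crossmor)$. Adding, $\sum_i e_i = \jail$. The same analysis immediately gives $\hourglass = \delta\,e_0$, $\Imor = \alpha\,e_4$, $\crossmor = e_0 - e_1 - e_3 + e_4 + \tilde{e}$, and, by combining the $\Hmor$-component of $e_1$ with the $\Hmor$-component of $\tilde{e}$ extracted via relation \cref{magic}, also an expression of $\Hmor$ as a linear combination of the $e_i$.

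Next I would verify the idempotency and orthogonality of the $e_i$ by direct diagrammatic computation. The easy identities $e_0^2 = e_0$, $e_4^2 = e_4$, and $e_0 e_4 = e_4 e_0 = 0$ follow respectively from $\bubble = \delta\,1_\one$, from the third relation of \cref{chess}, and from $\lollydrop = 0$ (together with $\lolly = 0$, which holds by pivotality). The orthogonality of $e_0$ and $e_4$ with the antisymmetrizer $e_1 + e_3$ follows by sliding crossings past cups via \cref{vortex,turvy} and applying \cref{pomegranate}. The core step is verifying $e_1^2 = e_1$, $e_3^2 = e_3$, and $e_1 e_3 = e_3 e_1 = 0$: these reduce to computing the square of the composite obtained by stacking $\Hmor$ on top of a two-strand antisymmetrizer, which via \cref{magic,pomegranate,ladderslip} is expressible as a linear combination of the antisymmetrizer and of the same composite. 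Substituting into the definitions of $e_1$ and $e_3$ yields the three required identities, and the orthogonality of $\tilde{e}$ with each of the other four idempotents is then forced by $\tilde{e} = \jail - e_0 - e_1 - e_3 - e_4$ together with the relations already established.

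The main obstacle is the squaring calculation for $\Hmor$ stacked on the antisymmetrizer. The coefficients produced by the reductions via \cref{magic,sqburst} must combine exactly with the scalars appearing in the definitions of $e_1$ and $e_3$ to produce the idempotency and orthogonality identities; the hypothesis $\delta \notin \{-10, -2, 0\}$ ensures that all denominators encountered throughout the calculation remain nonzero.
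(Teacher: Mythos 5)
Your proposal is correct, and it takes a genuinely different route from the paper.  The paper's proof simply observes that each $e$ is invariant under rotation by $180\degree$ (using \cref{ladderslip}), so it suffices to check $ef \in \kk e$, and then computes $f \circ e$ directly for each of the five choices of $f$, using \cref{pomegranate}, \cref{ladderslip}, \cref{chess}, \cref{sqburst}, \cref{triangle}, and \cref{magic} to simplify.  You instead prove first that $e_0, e_1, e_3, e_4, \tilde e$ are pairwise orthogonal idempotents summing to $1_{\go\otimes\go}$ (which is precisely the content of the paper's subsequent \cref{neanderthal}, proved there independently of \cref{sponge}), and then show that each of the five generators $f$ lies in $\Span_\kk\{e_0, e_1, e_3, e_4, \tilde e\}$, so that $fe_j = c_j e_j$ falls out of the orthogonal decomposition.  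This is logically sound: the paper's proof of \cref{neanderthal} does not use \cref{sponge}, so the reordering is permissible, and your key step — extracting $\Hmor$ from the $e_i$ — works: from $e_1, e_3$ one recovers the antisymmetrizer $a = e_1 + e_3$ and the composite $h = \Hmor \circ a = \tfrac{\alpha}{2} e_1 - \tfrac{4\alpha}{\delta+2} e_3$, then $h = \tfrac12(\Hmor - \dotcross)$, and \cref{magic} lets you eliminate $\dotcross$ and solve for $\Hmor$.  What you gain is a cleaner conceptual statement (and the constants $c_j^{(f)}$ for free); what you lose is economy — you must prove all of \cref{neanderthal} first, whereas the paper's direct computation is shorter and self-contained.

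Two small inaccuracies worth flagging.  First, the squaring computation you identify as the "core step" (stacking $\Hmor$ on the antisymmetrizer twice) reduces to the square $\sqmor$ and is handled via the defining relation \cref{sqburst} together with \cref{pomegranate} and \cref{ladderslip}; you cite \cref{magic} instead, but \cref{magic} alone does not expand $\sqmor$.  Second, your phrase "the $\Hmor$-component of $\tilde e$ extracted via relation \cref{magic}" is misleading — $\tilde e$ has no $\Hmor$-component by construction; the role of \cref{magic} is to eliminate $\dotcross$ when rewriting $h$, after which $\tilde e$ enters only through the decompositions of $\jail$ and $\crossmor$.  Neither slip affects the validity of the argument.
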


\begin{proof}
    Since the given choices of $e$ and $f$ are invariant under rotation by $180\degree$ (for $e$, this follows from \cref{ladderslip}), it suffices to show that $ef \in \kk e$.  This is a straightforward computation.  For example,
    \[
        \jail \circ e_1 = e_1,\quad
        \hourglass \circ e_1 \overset{\cref{pomegranate}}{\underset{\cref{ladderslip}}{=}} 0,\quad
        \crossmor \circ e_1 \overset{\cref{pomegranate}}{\underset{\cref{ladderslip}}{=}} - e_1,\quad
        \Imor \circ e_1 \overset{\cref{pomegranate}}{\underset{\cref{ladderslip}}{=}} 0,\quad
        \Hmor \circ e_1 \overset{\cref{sqburst}}{\underset{\cref{pomegranate}}{=}} \frac{\alpha}{2} e_1
    \]
    and
    \begin{gather*}
        \jail \circ \tilde{e} = \tilde{e},\quad
        \hourglass \circ \tilde{e} \overset{\cref{pomegranate}}{\underset{\cref{chess}}{=}} 0,\quad
        \crossmor \circ \tilde{e} \overset{\cref{pomegranate}}{\underset{\cref{chess}}{=}} \tilde{e},\quad
        \Imor \circ \tilde{e} \overset{\cref{pomegranate}}{\underset{\cref{chess}}{=}} 0,
        \\
        \Hmor \circ \tilde{e} \overset{\cref{chess}}{\underset{\cref{triangle}}{=}}
        \frac{1}{2} \left( \Hmor + \dotcross \right)
        - \frac{\alpha}{\delta}\, \hourglass
        + \frac{\delta-2}{2(\delta+2)}\, \Imor
        \overset{\cref{magic}}{=} \frac{2\alpha}{\delta+2} \tilde{e}.
    \end{gather*}
    The computations for $e \in \{e_0,e_3,e_4\}$ are similar.
\end{proof}

\begin{lem} \label{neanderthal}
    We have a decomposition
    \begin{equation} \label{coals}
        1_{\go \otimes \go} = e_0 + e_1 + e_3 + e_4 + \tilde{e}
    \end{equation}
    of $1_{\go \otimes \go}$ as a sum of pairwise orthogonal idempotents.
\end{lem}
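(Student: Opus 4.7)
The plan is to prove the three required properties in sequence: sum-to-identity, pairwise orthogonality, and idempotency, with the latter two following almost formally once the first is in hand.

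First, I would verify directly that $e_0 + e_1 + e_3 + e_4 + \tilde e = 1_{\go\otimes\go} = \jail$ by expanding each idempotent in the basis $\{\jail, \hourglass, \crossmor, \Hmor, \Imor\}$ of $(\Fcat)(\go^{\otimes 2},\go^{\otimes 2})$ guaranteed by \cref{SUP}. The expansions of $e_0$ and $e_4$ are immediate, and $\tilde e$ is read off using $\text{sym}_2 = \frac{1}{2}(\jail + \crossmor)$. For $e_1$ and $e_3$, I would substitute $\text{antisym}_2 = \frac{1}{2}(\jail - \crossmor)$, compute $\Imor \circ \text{antisym}_2 = \frac{1}{2}(\Imor - \dotcross)$, and then use \cref{magic} to eliminate $\dotcross$ in favor of the five basis morphisms. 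Collecting coefficients, the contributions of $\hourglass$, $\crossmor$, $\Hmor$, and $\Imor$ all cancel (the cancellations between $e_1$ and $e_3$ are symmetric in the signs $\pm\frac{\delta+2}{\alpha(\delta+10)}$, and between $e_0, \tilde{e}$ and $e_4, \tilde{e}$ the cancellations are immediate), while the $\jail$-coefficients sum to $\frac{2}{\delta+10} + \frac{\delta+6}{2(\delta+10)} + \frac{1}{2} = 1$.

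Second, for pairwise orthogonality I would invoke \cref{sponge} from both sides. Writing $e_j = \sum_k c_k f_k$ with each $f_k$ a basis morphism yields
\[
    e_i \circ e_j = \sum_k c_k\, (e_i \circ f_k) \in \kk e_i,
\]
and symmetrically, writing $e_i$ in the basis gives $e_i \circ e_j \in \kk e_j$. Thus $e_i \circ e_j \in \kk e_i \cap \kk e_j$. Inspecting the basis expansions obtained in the first step, the five morphisms $e_0, e_1, e_3, e_4, \tilde e$ are pairwise non-proportional (for example, $e_0$ and $e_4$ have support on single distinct basis vectors, and a direct check of the explicit coefficient ratios rules out proportionality among the remaining pairs under our standing hypothesis $\delta \notin\{-10,-2,0\}$). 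Hence $\kk e_i \cap \kk e_j = 0$ for $i \neq j$, so $e_i \circ e_j = 0$.

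Finally, idempotency is formal: for each $i$,
\[
    e_i = e_i \circ 1_{\go \otimes \go} = e_i \circ (e_0 + e_1 + e_3 + e_4 + \tilde e) = e_i \circ e_i,
\]
where the cross terms vanish by the orthogonality just established. The main obstacle is the bookkeeping in the first step, which is mechanical but delicate: one has to apply \cref{magic} at exactly the right moment inside $e_1$ and $e_3$ to rewrite $\dotcross$, and then verify the five-way coefficient cancellation. Once that computation is carried out, the orthogonality and idempotency steps are essentially automatic consequences of \cref{sponge}.
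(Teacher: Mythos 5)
Your first step (sum-to-identity) is correct, though framed a little awkwardly: you do not need the five morphisms to be a basis to verify the identity, only to express both sides in terms of them. The paper does this more cleanly by noting $e_0+e_4+\tilde e = \frac{1}{2}(\jail+\crossmor)$ (the symmetrizer) and $e_1+e_3 = \frac{1}{2}(\jail-\crossmor)$ (the antisymmetrizer), which immediately sum to $\jail$. Your last step (idempotency follows formally from sum-to-identity plus orthogonality) is a valid and pleasant shortcut, saving you the paper's direct verification that $e_1^2 = e_1$.

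The gap is in the orthogonality step. You invoke \cref{SUP} to claim that $\jail,\hourglass,\crossmor,\Hmor,\Imor$ are a \emph{basis} of $\Fcat(\go^{\otimes 2},\go^{\otimes 2})$, and then deduce that the $e_i$ are pairwise non-proportional because their formal coefficient vectors differ. But \cref{SUP} only says these five morphisms form a basis of $(\Tcat/\cI)(\go^{\otimes 2},\go^{\otimes 2})$ for tensor ideals $\cI$ satisfying the dimension constraint \cref{funf}; it does \emph{not} assert this for $\Fcat$ itself. In fact the paper explicitly declines to assume even that the five morphisms span $\Fcat(\go^{\otimes 2},\go^{\otimes 2})$ -- see the remark immediately after \cref{neanderthal}, which says the primitivity of the $e_i$ would follow ``if we knew that the morphisms \cref{bigfive} spanned $\Fcat(\go^{\otimes 2},\go^{\otimes 2})$''. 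Without linear independence, two morphisms with distinct coefficient vectors in a spanning (but possibly dependent) set can still be equal or proportional in $\Fcat$, so your inference that $\kk e_i\cap\kk e_j = 0$ does not go through. The argument also cannot be repaired by first establishing idempotency and then arguing that distinct nonzero idempotents are non-proportional, since your idempotency proof depends on orthogonality -- the two steps would be circular. The paper avoids all of this by directly computing $e_ie_j = 0$ from \cref{pomegranate}, \cref{ladderslip}, and \cref{sqburst} (the hardest case being $e_1e_3$), a relation-theoretic verification valid in $\Fcat$ regardless of any dimension or linear-independence hypotheses. You could salvage your plan by replacing the non-proportionality argument with the explicit coefficient computation that \cref{sponge} already hands you: since $f\circ e_j = \lambda_{f,j}e_j$ with known scalars, $e_i\circ e_j = \bigl(\sum_k c_k\lambda_{k,j}\bigr)e_j$ and you need only check the scalar vanishes -- but that is essentially the paper's computation, not the shortcut you had hoped for.
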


\begin{proof}
    A straightforward computation verifies \cref{coals}; one first uses that $\jail$ is the sum of the symmetrizer and the antisymmetrizer, and then decomposes further.  Next we show that $e_0,e_1,e_3,e_4$ are idempotent.  We have $e_0^2=e_0$ and $e_4^2=e_4$ by the fourth and third relations in \cref{chess}, respectively.  Next,
    \begin{multline*}
        e_1^2
        \overset{\cref{ladderslip}}{=} \frac{64}{(\delta+10)^2}
        \left(
            \begin{tikzpicture}[centerzero]
                \draw (-0.2,-0.5) -- (-0.2,0.5);
                \draw (0.2,-0.5) -- (0.2,0.5);
                \antbox{-0.3,-0.1}{0.3,0.1};
            \end{tikzpicture}
            + \frac{\delta+2}{4\alpha}\,
            \begin{tikzpicture}[anchorbase]
                \draw (-0.2,-0.4) -- (-0.2,0.6);
                \draw (0.2,-0.4) -- (0.2,0.6);
                \draw (-0.2,0.35) -- (0.2,0.35);
                \antbox{-0.3,-0.1}{0.3,0.1};
            \end{tikzpicture}
            + \frac{(\delta+2)^2}{16 \alpha^2}
            \begin{tikzpicture}[anchorbase]
                \draw (-0.2,-0.4) -- (-0.2,0.6);
                \draw (0.2,-0.4) -- (0.2,0.6);
                \draw (-0.2,0.2) -- (0.2,0.2);
                \draw (-0.2,0.4) -- (0.2,0.4);
                \antbox{-0.3,-0.2}{0.3,0};
            \end{tikzpicture}
        \right)
        \\
        \overset{\mathclap{\cref{sqburst}}}{\underset{\cref{pomegranate}}{=}}\
        \frac{64}{(\delta+10)^2}
        \left(
            \frac{\delta+10}{8}
            \begin{tikzpicture}[centerzero]
                \draw (-0.2,-0.5) -- (-0.2,0.5);
                \draw (0.2,-0.5) -- (0.2,0.5);
                \antbox{-0.3,-0.1}{0.3,0.1};
            \end{tikzpicture}
            + \frac{(\delta+10)(\delta+2)}{32\alpha}\,
            \begin{tikzpicture}[anchorbase]
                \draw (-0.2,-0.4) -- (-0.2,0.6);
                \draw (0.2,-0.4) -- (0.2,0.6);
                \draw (-0.2,0.35) -- (0.2,0.35);
                \antbox{-0.3,-0.1}{0.3,0.1};
            \end{tikzpicture}
        \right)
        = e_1.
    \end{multline*}
    Since
    \[
        e_3 =
        \begin{tikzpicture}[centerzero]
            \draw (-0.15,-0.35) -- (-0.15,0.35);
            \draw (0.15,-0.35) -- (0.15,0.35);
            \antbox{-0.25,-0.1}{0.25,0.1};
        \end{tikzpicture}
        \, - e_1,
    \]
    it then follows that $e_3^2=e_3$.

    It remains to show that the given idempotents are orthogonal.  This is a mostly routine verification using \cref{pomegranate} and the fact that the symmetrizer and antisymmetrizer are orthogonal.  The most involved computation is the verification that $e_1$ and $e_3$ are orthogonal.  We see this by computing
    \[
        e_1 e_3
        = \frac{8(\delta+2)}{(\delta+10)^2}
        \left(
            \begin{tikzpicture}[centerzero]
                \draw (-0.2,-0.5) -- (-0.2,0.5);
                \draw (0.2,-0.5) -- (0.2,0.5);
                \antbox{-0.3,-0.1}{0.3,0.1};
            \end{tikzpicture}
            + \frac{\delta-6}{4\alpha}\,
            \begin{tikzpicture}[anchorbase]
                \draw (-0.2,-0.4) -- (-0.2,0.6);
                \draw (0.2,-0.4) -- (0.2,0.6);
                \draw (-0.2,0.35) -- (0.2,0.35);
                \antbox{-0.3,-0.1}{0.3,0.1};
            \end{tikzpicture}
            - \frac{\delta+2}{2 \alpha^2}
            \begin{tikzpicture}[anchorbase]
                \draw (-0.2,-0.4) -- (-0.2,0.6);
                \draw (0.2,-0.4) -- (0.2,0.6);
                \draw (-0.2,0.2) -- (0.2,0.2);
                \draw (-0.2,0.4) -- (0.2,0.4);
                \antbox{-0.3,-0.2}{0.3,0};
            \end{tikzpicture}
        \right)
        \overset{\cref{sqburst}}{\underset{\cref{pomegranate}}{=}}
        0.
    \]
    It then follows from \cref{ladderslip} that $e_3 e_1 = e_1 e_3 = 0$.
\end{proof}

\begin{rem}
    If we knew that the morphisms \cref{bigfive} spanned $\Fcat(\go^{\otimes 2}, \go^{\otimes 2})$, then it would follow from \cref{sponge} that
    \[
        \dim_\kk \left( e \Fcat(\go^{\otimes 2},\go^{\otimes 2}) e \right) = 1
        \quad \text{for all } e \in \{e_0,e_1,e_3,e_4,\tilde{e}\},
    \]
    and hence that $e_0,e_1,e_3,e_4,\tilde{e}$ are primitive.  For example, this would be the case if \cref{pipe} holds.
\end{rem}

Recall from \cref{sec:functor} our conventions for labeling of weights of $\fg$ and that $V = V_{\omega_4}$.

\begin{theo} \label{hacky}
    Suppose $\alpha = \frac{7}{3}$ and $\delta = 26$.  Then
    \[
        \Kar(\Phi)(\go^{\otimes 2}, e_1) = V_{\omega_1},\quad
        \Kar(\Phi)(\go^{\otimes 2}, e_3) = V_{\omega_3},\quad
        \Kar(\Phi)(\go, 1_\go) = V_{\omega_4}, \quad
        \Kar(\Phi)(\go^{\otimes 2},\tilde{e}) = V_{2\omega_4}.
    \]
\end{theo}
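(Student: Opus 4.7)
The plan is to transfer the orthogonal idempotent decomposition of \cref{neanderthal} across the functor $\Phi$, obtaining a decomposition of $V^{\otimes 2}$ into five $\fg$-submodules, and then identify each summand by computing its $\fg$-dimension via the categorical trace.  By \cref{measure,2decomp} and Schur's lemma, $\End_\fg(V^{\otimes 2})$ is five-dimensional and isomorphic as an algebra to $\kk^5$, with one factor per simple summand of $V^{\otimes 2}$.  So any five pairwise orthogonal, nonzero idempotents summing to $1$ in this algebra must be a permutation of the five primitive idempotents.  It therefore suffices to show that each of $\Phi(e_0), \Phi(e_1), \Phi(e_3), \Phi(e_4), \Phi(\tilde{e})$ is nonzero and to identify its image; since each is a projection onto a unique simple summand, matching the rank (equivalently, the trace) to the dimension of a simple summand completes the identification.

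Three of the identifications come from structural observations combined with short trace computations.  The statement $\Kar(\Phi)(\go, 1_\go) = V_{\omega_4}$ is immediate from \cref{magneto}.  For $e_0 = \frac{1}{\delta}\, \hourglass$, the morphism factors through $\one$, so $\Phi(e_0)$ projects onto the trivial summand $V_0$; and $\tr(\Phi(e_0)) = \frac{1}{\delta}\tr(\hourglass) = 1$ confirms nonvanishing.  For $e_4 = \frac{1}{\alpha}\, \Imor$, the morphism factors through $\go$, and $\mergemor \circ \splitmor = \alpha \cdot 1_\go$ from \cref{chess} shows $\Phi(\splitmor)$ is injective, so the image is a nonzero submodule isomorphic to $V = V_{\omega_4}$; the trace $\tr(\Phi(e_4)) = \delta$ agrees.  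Next, $\crossmor$ fixes both $\hourglass$ and $\Imor$ (by pivotality together with \cref{chess}), so $e_0$ and $e_4$ lie in the symmetric part $\tilde{s} \cdot \End(\go^{\otimes 2}) \cdot \tilde{s}$, where $\tilde{s}$ denotes the symmetrizer from \cref{boxes}; hence so does $\tilde{e} = \tilde{s} - e_0 - e_4$.  Under $\Phi$, the symmetric part projects onto $\Sym^2 V = V_0 \oplus V_{\omega_4} \oplus V_{2\omega_4}$, so $\Phi(\tilde{e})$ projects onto the remaining summand $V_{2\omega_4}$; consistently, $\tr(\Phi(\tilde{e})) = \binom{\delta+1}{2} - 1 - \delta = \frac{(\delta+1)(\delta-2)}{2}$, which equals $324 = \dim V_{2\omega_4}$ at $\delta = 26$.

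For the remaining pair, both $e_1$ and $e_3$ lie in the antisymmetric part of $\End(\go^{\otimes 2})$ and therefore map into $V_{\omega_1} \oplus V_{\omega_3}$; since these summands have distinct dimensions $52$ and $273$, the identification is determined by a single trace computation.  Expanding $e_1$ via its defining formula and applying $\Hmor \circ \crossmor = \dotcross$ from \cref{turvy} reduces $\tr(\Phi(e_1))$ to
\[
    \tr(\Phi(e_1)) = \frac{8}{\delta+10} \left( \frac{\delta(\delta-1)}{2} + \frac{\delta+2}{8\alpha}\bigl( \tr(\Hmor) - \tr(\dotcross) \bigr) \right),
\]
so the remaining task is to determine $\tr(\Hmor)$ and $\tr(\dotcross)$.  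The main obstacle is the first: I will argue diagrammatically that $\tr(\Hmor) = 0$, because the categorical closure of $\Hmor$ exhibits a lollipop subdiagram at each of its two trivalent vertices, each of which vanishes by the last relation in \cref{chess}.  (Alternatively, this can be verified directly through $\Phi$ using the Albert-algebra identity $\sum_b \pi(b \circ b^\vee) = 0$, which holds because $\sum_b b \circ b^\vee$ is $G$-invariant and hence proportional to $1_A$, whose image under $\pi$ is zero.)  With $\tr(\Hmor) = 0$ in hand, taking the trace of the Cayley--Hamilton relation \cref{magic} and using $\tr(\Imor) = \alpha\delta$ (from $\mergemor \circ \splitmor = \alpha \cdot 1_\go$) immediately gives $\tr(\dotcross) = \alpha\delta$.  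Substituting yields $\tr(\Phi(e_1)) = \frac{3\delta(\delta-2)}{\delta+10}$, which equals $52 = \dim V_{\omega_1}$ at $\delta = 26$.  Therefore $\Phi(e_1)$ projects onto $V_{\omega_1}$ and, by elimination within $V_{\omega_1} \oplus V_{\omega_3}$, $\Phi(e_3)$ projects onto $V_{\omega_3}$.
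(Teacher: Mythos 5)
Your proof is correct and takes a genuinely different route from the paper's.  The paper proceeds by computing the categorical dimension of each idempotent directly through closed-diagram calculations (explicitly for $e_1$ and $e_3$, and easily for the rest), then matches against the five distinct dimensions $1,52,273,26,324$.  You replace most of those computations with structural observations: $e_0$ factors through $\one$ so projects onto the trivial summand; $e_4$ factors through $\go$ with $\Phi(\splitmor)$ injective so projects onto a copy of $V_{\omega_4}$; $e_0, e_4, \tilde{e}$ all lie in the symmetric summand (since they are fixed by pre- and post-composition with $\crossmor$), forcing $\Phi(\tilde{e})$ onto $V_{2\omega_4}$; and $e_1,e_3$ lie in the antisymmetric summand $V_{\omega_1}\oplus V_{\omega_3}$, so the pair is pinned down by a single trace.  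The one computation you retain, for $\tr(\Phi(e_1))$, is handled cleanly: the closed diagram of $\Hmor$ exhibits a lollipop (the closure strand loops back through each trivalent vertex) and vanishes by the last relation in \cref{chess}, and then taking the trace of \cref{magic} gives $\tr(\dotcross)=\alpha\delta$.  This is exactly what the paper's explicit computation of the antisymmetrizer–$\Hmor$ closure achieves, but your version isolates the lollipop vanishing as the key mechanism, which is conceptually cleaner.  The paper's approach computes both $e_1$ and $e_3$ traces (giving a built-in consistency check that the dimensions sum to $676$), whereas yours minimizes computation by using elimination; both are sound.
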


\begin{proof}
    In $\fg$-mod, we have the tensor product decomposition \cref{2decomp} and dimensions
    \[
        \dim V_0 = 1,\quad
        \dim V_{\omega_1} = 52,\quad
        \dim V_{\omega_3} = 273,\quad
        \dim V_{\omega_4} = 26,\quad
        \dim V_{2 \omega_4} = 324.
    \]
    See, for example, \cite[Ch.~6,~Table~2]{MPR90}.  By \cref{neanderthal}, we also have the decomposition
    \[
        V^{\otimes 2} \cong \Kar(\Phi)(\go^{\otimes 2},e_0)
        \oplus \Kar(\Phi)(\go^{\otimes 2}, e_1)
        \oplus \Kar(\Phi)(\go^{\otimes 2}, e_3)
        \oplus \Kar(\Phi)(\go^{\otimes 2}, e_4)
        \oplus \Kar(\Phi)(\go^{\otimes 2}, \tilde{e}).
    \]
    Thus, if we can show that the images of the given elements of $\Kar(\Fcat)$ have the expected dimensions, the theorem follows.  Since the dimension of a $\fg$-module is the trace of its identity endomorphism, the dimension of $\Kar(\Phi)(\go^{\otimes n}, e)$ is the image under $\Phi$ of
    \[
        \begin{tikzpicture}[centerzero]
            \draw (-0.7,-0.2) rectangle (0,0.2);
            \node at (-0.35,0) {$e$};
            \draw (-0.2,0.2) arc(180:0:0.2) -- (0.2,-0.2) arc(360:180:0.2);
            \draw (-0.4,0.2) arc(180:0:0.4) -- (0.4,-0.2) arc(360:180:0.4);
            \draw (-0.6,0.2) arc(180:0:0.6) -- (0.6,-0.2) arc(360:180:0.6);
        \end{tikzpicture}
        \in \Fcat(\go,\go),
    \]
    where there are $n$ strands. (Here, and in what follows, we identify $\lambda 1_\one$ with $\lambda \in \kk$.)

    Note that
    \begin{gather*}
        \begin{tikzpicture}[centerzero]
            \draw (-0.2,0.1) arc(180:0:0.2) -- (0.2,-0.1) arc(360:180:0.2);
            \draw (-0.4,0.1) arc(180:0:0.4) -- (0.4,-0.1) arc(360:180:0.4);
            \antbox{-0.5,-0.1}{-0.1,0.1};
        \end{tikzpicture}
        =
        \frac{1}{2}\,
        \begin{tikzpicture}[centerzero]
            \draw (-0.2,-0.1) -- (-0.2,0.1) arc(180:0:0.2) -- (0.2,-0.1) arc(360:180:0.2);
            \draw (-0.4,-0.1) -- (-0.4,0.1) arc(180:0:0.4) -- (0.4,-0.1) arc(360:180:0.4);
        \end{tikzpicture}
        - \frac{1}{2}\,
        \begin{tikzpicture}[centerzero]
            \draw (-0.4,-0.15) \braidup (-0.2,0.15) arc(180:0:0.15) -- (0.1,-0.15) arc(360:180:0.15);
            \draw (-0.2,-0.15) \braidup (-0.4,0.15) arc(180:0:0.35) -- (0.3,-0.15) arc(360:180:0.35);
        \end{tikzpicture}
        \overset{\cref{chess}}{=}
        \frac{\delta(\delta-1)}{2},
        \\
        \begin{tikzpicture}[anchorbase]
            \draw (-0.2,0.1) -- (-0.2,0.3) arc(180:0:0.2) -- (0.2,-0.1) arc(360:180:0.2);
            \draw (-0.4,0.1) -- (-0.4,0.3) arc(180:0:0.4) -- (0.4,-0.1) arc(360:180:0.4);
            \draw (-0.4,0.25) -- (-0.2,0.25);
            \antbox{-0.5,-0.1}{-0.1,0.1};
        \end{tikzpicture}
        =
        \frac{1}{2}\,
        \begin{tikzpicture}[centerzero]
            \draw (0,0) circle(0.2);
            \draw (0,0) circle(0.4);
            \draw (-0.4,0) -- (-0.2,0);
        \end{tikzpicture}
        -\frac{1}{2}\!\!\!
        \begin{tikzpicture}[centerzero]
            \draw (0,0) to[out=60,in=90,looseness=1.5] (0.4,0) to[out=-90,in=-60,looseness=1.5] (0,0);
            \draw (0,0) to[out=135,in=90,looseness=2.5] (0.6,0) to[out=-90,in=-135,looseness=2.5] (0,0);
            \opendot{0,0};
        \end{tikzpicture}
        \overset{\cref{chess}}{=} 0 - \frac{1}{2}\!\!\!
        \begin{tikzpicture}[centerzero]
            \draw (0,-0.1) -- (0,0.1) -- (0.1,0.2) to[out=45,in=90] (0.4,0) to[out=-90,in=-45] (0.1,-0.2) -- (0,-0.1);
            \draw (0,0.1) -- (-0.1,0.2) to[out=135,in=90,looseness=2] (0.6,0) to[out=-90,in=-135,looseness=2] (-0.1,-0.2) -- (0,-0.1);
        \end{tikzpicture}
        \overset{\cref{chess}}{=} - \frac{\alpha \delta}{2}.
    \end{gather*}
    Therefore, we have
    \begin{gather*}
        \dim (\go^{\otimes 2}, e_1)
        = \frac{8}{\delta+10} \left( \frac{\delta(\delta-1)}{2} - \frac{\delta (\delta+2)}{8} \right)
        = \frac{3 \delta (\delta-2)}{\delta+10}
        = 52,
        \\
        \dim (\go^{\otimes 2}, e_3)
        = \frac{\delta+2}{\delta+10} \left( \frac{\delta(\delta-1)}{2} + \delta \right)
        = \frac{\delta(\delta+1)(\delta+2)}{2(\delta+10)}
        = 273.
    \end{gather*}
    It is also clear that $\dim(\go,1_\one) = \dim(\go^{\otimes 2}, e_4) = \delta = 26$ and that $\dim(\go^{\otimes 2},e_0) = 1$.  It then follows that $\dim(\go^{\otimes 2},\tilde{e}) = 26^2 - 1 - 52 - 273 - 26 = 324$.
\end{proof}

Note that \cref{hacky} does not describe an object in $\Kar(\Fcat)$ that is mapped to the second fundamental representation $V_{\omega_2}$.  In $\fg$-mod, the lowest tensor power of $V$ in which $V_{\omega_2}$ appears is $V^{\otimes 3}$.  Thus, we expect that there exists an idempotent $e_2 \in \Fcat(\go^{\otimes 3}, \go^{\otimes 3})$ such that $\Kar(\Phi)(\go^{\otimes 3},e_2) = V_{\omega_2}$.  (Such an idempotent is guaranteed to exist if \cref{faithful} holds.)  In fact, we can say a bit more.  A computation similar to the ones in the proof of \cref{hacky} shows that
\begin{equation}
    \begin{tikzpicture}[centerzero]
        \antbox{-0.7,-0.1}{-0.1,0.1};
        \draw (-0.2,0.1) arc(180:0:0.2) -- (0.2,-0.1) arc(360:180:0.2);
        \draw (-0.4,0.1) arc(180:0:0.4) -- (0.4,-0.1) arc(360:180:0.4);
        \draw (-0.6,0.1) arc(180:0:0.6) -- (0.6,-0.1) arc(360:180:0.6);
    \end{tikzpicture}
    = \frac{\delta(\delta-1)(\delta-2)}{6}
    = 2600 = 1274 + 273 + 1053
    \quad \text{when } \delta = 26.
\end{equation}
The tensor product decomposition of $V^{\otimes 3}$ is given in \cref{3decomp}.  Since
\begin{gather*}
    \dim V_0 = 1,\
    \dim V_{\omega_1} = 52,\
    \dim V_{\omega_2} = 1274,\
    \dim V_{\omega_3} = 273,\
    \dim V_{\omega_4} = 26,\\
    \dim V_{\omega_1+\omega_4} = 1053,\
    \dim V_{\omega_3+\omega_4} = 4096,\
    \dim V_{2\omega_4} = 324, \text{ and }
    \dim V_{3\omega_4} = 2652,
\end{gather*}
(see, for example, \cite[Ch.~6,~Table~2]{MPR90}) the only submodule of $V^{\otimes 3}$ of dimension $2600$ is $V_{\omega_2} \oplus V_{\omega_3} \oplus V_{\omega_1+\omega_4}$.  Thus, we expect that the antisymmetrizer
$
    \begin{tikzpicture}[centerzero]
        \draw (-0.2,-0.3) -- (-0.2,0.3);
        \draw (0,-0.3) -- (0,0.3);
        \draw (0.2,-0.3) -- (0.2,0.3);
        \antbox{-0.3,-0.1}{0.3,0.1};
    \end{tikzpicture}
$
decomposes as a sum of three orthogonal minimal idempotents, one of which is $e_2$.  (This is guaranteed to happen if \cref{faithful} holds.)  Unfortunately, the computations required to find this idempotent explicitly are unwieldy.

\begin{lem}
    We have
    \[
        \begin{tikzpicture}[centerzero]
            \draw (-0.6,-0.2) rectangle (-0.1,0.2);
            \node at (-0.35,0) {$e_1$};
            \draw (0.6,-0.2) rectangle (0.1,0.2);
            \node at (0.35,0) {$e_1$};
            \draw (-0.25,0.2) arc(180:0:0.25);
            \draw (0.25,-0.2) arc(360:180:0.25);
            \draw (-0.45,0.2) to[out=up,in=225] (0,0.7) -- (0,0.9);
            \draw (0.45,0.2) to[out=up,in=-45] (0,0.7);
            \draw (-0.45,-0.2) to[out=down,in=135] (0,-0.7) -- (0,-0.9);
            \draw (0.45,-0.2) to[out=down,in=45] (0,-0.7);
        \end{tikzpicture}
        =
        \frac{3 \alpha (2-\delta) (\delta - 26)}{4 (\delta + 10)^2} 1_\go.
    \]
\end{lem}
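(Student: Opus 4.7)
Setting $\tau = \frac{\delta+2}{4\alpha}$, we have $e_1 = \frac{8}{\delta+10}(A + \tau \Hmor \circ A)$, where $A$ is the antisymmetrizer on two strands. The diagram on the left-hand side of the lemma equals $\mergemor \circ Q \circ \splitmor$, where
\[
    Q := (1_\go \otimes \capmor \otimes 1_\go) \circ (e_1 \otimes e_1) \circ (1_\go \otimes \cupmor \otimes 1_\go) \in \Fcat(\go^{\otimes 2}, \go^{\otimes 2}).
\]
My plan is to first express $Q$ in the basis $\{\jail, \hourglass, \crossmor, \Hmor, \Imor\}$ of $\Fcat(\go^{\otimes 2}, \go^{\otimes 2})$ supplied by \cref{SUP}, and then apply $\mergemor \circ (-) \circ \splitmor$ termwise.

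The termwise evaluations are immediate from \cref{chess} (in particular $\mergemor \circ \splitmor = \alpha \cdot 1_\go$ and $\mergemor \circ \cupmor = 0$), the identities $\Imor = \splitmor \circ \mergemor$ and $\mergemor \circ \Hmor = \trimor$, and \cref{triangle}:
\[
    \mergemor \circ \jail \circ \splitmor = \mergemor \circ \crossmor \circ \splitmor = \alpha \cdot 1_\go, \quad \mergemor \circ \hourglass \circ \splitmor = 0, \quad \mergemor \circ \Imor \circ \splitmor = \alpha^2 \cdot 1_\go,
\]
\[
    \mergemor \circ \Hmor \circ \splitmor = \frac{\alpha^2 (2 - \delta)}{2(\delta + 2)} \cdot 1_\go.
\]

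The substantial work is the expansion of $Q$. Writing
\[
    e_1 \otimes e_1 = \frac{64}{(\delta+10)^2} \left[1_\go^{\otimes 4} + \tau(\Hmor \otimes 1_\go^{\otimes 2} + 1_\go^{\otimes 2} \otimes \Hmor) + \tau^2 (\Hmor \otimes \Hmor)\right] \circ (A \otimes A)
\]
yields four sub-terms of $Q$. The pure-$A$ term $(1_\go \otimes \capmor \otimes 1_\go) \circ (A \otimes A) \circ (1_\go \otimes \cupmor \otimes 1_\go)$ is computed by expanding $A = \frac{1}{2}(\jail - \crossmor)$ and using the zig-zag relations (\cref{vortex}), $\capmor \circ \cupmor = \delta$, and $\crossmor \circ \cupmor = \cupmor$; it reduces to $\tfrac{1}{4}((\delta-2)\jail + \hourglass)$. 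For each of the three remaining sub-terms, the $\Hmor$ factors can be slid past the antisymmetrizers using \cref{ladderslip}, and the resulting interactions of H-shapes with the central cup and cap produce triangle- and square-shaped sub-diagrams, which are reduced back to the basis using \cref{magic}, \cref{triangle}, and \cref{sqburst}.

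The main obstacle will be the algebraic bookkeeping: each of the four sub-terms contributes a rational function of $\delta$ to the coefficient of every basis element, and substantial cancellation is needed to produce the final scalar $\frac{3\alpha(2-\delta)(\delta-26)}{4(\delta+10)^2}$. The factor $(\delta-26)$ should emerge from the interplay of the coefficients in \cref{magic} and \cref{sqburst} with the linear factors $(\delta-2)$, $(\delta+2)$, and $(\delta+10)$ arising from the cup/cap contractions, consistent with the diagrammatic significance of $\delta = 26$ mentioned in the introduction.
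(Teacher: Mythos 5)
Your plan takes essentially the same route as the paper: both expand $e_1 = \frac{8}{\delta+10}\bigl(A + \frac{\delta+2}{4\alpha}\,\Hmor\circ A\bigr)$ on both factors and evaluate the four resulting sub-terms using \cref{chess}, \cref{triangle}, and \cref{sqburst}. Your extra intermediate step of expressing the partial contraction $Q$ as a combination of $\jail$, $\hourglass$, $\crossmor$, $\Hmor$, $\Imor$ before applying $\mergemor\circ(-)\circ\splitmor$ is a reordering of the same bookkeeping, not a different argument; the termwise contractions you record and the pure-$A$ reduction to $\tfrac14\bigl((\delta-2)\,\jail + \hourglass\bigr)$ are all correct.

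Two points of caution. First, you invoke \cref{SUP} for a \emph{basis}, but that lemma produces a basis only in a quotient $\Tcat/\cI$ under a dimension hypothesis that is not established for $\Fcat_{\alpha,\delta}$ at general parameters (and in fact linear independence of these five diagrams in $\Fcat$ itself is not known); fortunately your argument only needs that the five diagrams \emph{span} $\Fcat(\go^{\otimes 2},\go^{\otimes 2})$, which does follow from the defining relations. Second, and more substantively, the proposal stops at the first of the four sub-terms: the one-$\Hmor$ and two-$\Hmor$ contributions, which require \cref{magic}, \cref{triangle}, and \cref{sqburst} and must conspire after cancellation to produce the factor $\delta-26$, are described but not computed. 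As written this is a plan rather than a proof; for comparison, the paper evaluates those three closed sub-diagrams directly as $\frac{\alpha(\delta-2)}{4}1_\go$, $\frac{\alpha^2(\delta-2)}{8(\delta+2)}1_\go$, and $\frac{\alpha^3(2-\delta)(3\delta+2)}{16(\delta+2)^2}1_\go$, from which the stated scalar follows.
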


\begin{proof}
    We compute
    \begin{gather*}
        \begin{tikzpicture}[centerzero]
            \antbox{-0.6,-0.1}{-0.1,0.1};
            \antbox{0.6,-0.1}{0.1,0.1};
            \draw (-0.25,0.1) arc(180:0:0.25);
            \draw (0.25,-0.1) arc(360:180:0.25);
            \draw (-0.45,0.1) to[out=up,in=225] (0,0.6) -- (0,0.8);
            \draw (0.45,0.1) to[out=up,in=-45] (0,0.6);
            \draw (-0.45,-0.1) to[out=down,in=135] (0,-0.6) -- (0,-0.8);
            \draw (0.45,-0.1) to[out=down,in=45] (0,-0.6);
        \end{tikzpicture}
        =
        \frac{1}{4}
        \left(
            \begin{tikzpicture}[centerzero]
                \draw (-0.25,-0.1) -- (-0.25,0.1) arc(180:0:0.25) -- (0.25,-0.1) arc(360:180:0.25) -- cycle;
                \draw (0,-0.8) -- (0,-0.6) to[out=135,in=down] (-0.45,-0.1) -- (-0.45,0.1) to[out=up,in=225] (0,0.6) -- (0,0.8);
                \draw (0,-0.6) to[out=45,in=down] (0.45,-0.1) -- (0.45,0.1) to[out=up,in=-45] (0,0.6) -- (0,0.8);
            \end{tikzpicture}
            -
            \begin{tikzpicture}[centerzero]
                \draw (0,-0.8) -- (0,-0.6) to[out=135,in=down] (-0.45,-0.2) to[out=up,in=180] (0,0.25) arc(90:-90:0.25) to[out=180,in=down] (-0.45,0.2) to[out=up,in=225] (0,0.6) -- (0,0.8);
                \draw (0,-0.6) to[out=45,in=down] (0.45,-0.1) -- (0.45,0.1) to[out=up,in=-45] (0,0.6) -- (0,0.8);
            \end{tikzpicture}
            -
            \begin{tikzpicture}[centerzero,xscale=-1]
                \draw (0,-0.8) -- (0,-0.6) to[out=135,in=down] (-0.45,-0.2) to[out=up,in=180] (0,0.25) arc(90:-90:0.25) to[out=180,in=down] (-0.45,0.2) to[out=up,in=225] (0,0.6) -- (0,0.8);
                \draw (0,-0.6) to[out=45,in=down] (0.45,-0.1) -- (0.45,0.1) to[out=up,in=-45] (0,0.6) -- (0,0.8);
            \end{tikzpicture}
            +
            \begin{tikzpicture}[centerzero]
                \draw (0,-0.8) -- (0,-0.5) to[out=135,in=down] (-0.4,-0.2) to[out=up,in=180] (0,0.2) to[out=0,in=up] (0.4,-0.2) to[out=down,in=45] (0,-0.5);
                \draw (0,0.8) -- (0,0.5) to[out=-135,in=up] (-0.4,0.2) to[out=down,in=180] (0,-0.2) to[out=0,in=down] (0.4,0.2) to[out=up,in=-45] (0,0.5);
            \end{tikzpicture}
        \right)
        \overset{\cref{chess}}{=}
        \frac{\alpha(\delta-2)}{4} 1_\go,
        \\
        \begin{tikzpicture}[centerzero]
            \draw (-0.25,-0.1) -- (-0.25,0.1) arc(180:0:0.25) -- (0.25,-0.1) arc(360:180:0.25) -- cycle;
            \draw (0,-0.8) -- (0,-0.6) to[out=135,in=down] (-0.45,-0.1) -- (-0.45,0.1) to[out=up,in=225] (0,0.6) -- (0,0.8);
            \draw (0,-0.6) to[out=45,in=down] (0.45,-0.1) -- (0.45,0.1) to[out=up,in=-45] (0,0.6) -- (0,0.8);
            \antbox{-0.6,-0.1}{-0.1,0.1};
            \antbox{0.1,-0.25}{0.6,-0.05};
            \draw (0.25,0.1) -- (0.45,0.1);
        \end{tikzpicture}
        =
        \frac{1}{4}
        \left(
            \begin{tikzpicture}[centerzero]
                \draw (-0.25,-0.1) -- (-0.25,0.1) arc(180:0:0.25) -- (0.25,-0.1) arc(360:180:0.25) -- cycle;
                \draw (0,-0.8) -- (0,-0.6) to[out=135,in=down] (-0.45,-0.1) -- (-0.45,0.1) to[out=up,in=225] (0,0.6) -- (0,0.8);
                \draw (0,-0.6) to[out=45,in=down] (0.45,-0.1) -- (0.45,0.1) to[out=up,in=-45] (0,0.6) -- (0,0.8);
                \draw (0.25,0) -- (0.45,0);
            \end{tikzpicture}
            -
            \begin{tikzpicture}[centerzero]
                \draw (0,-0.8) -- (0,-0.6) to[out=135,in=down] (-0.45,-0.2) to[out=up,in=180] (0,0.25) arc(90:-90:0.25) to[out=180,in=down] (-0.45,0.2) to[out=up,in=225] (0,0.6) -- (0,0.8);
                \draw (0,-0.6) to[out=45,in=down] (0.45,-0.1) -- (0.45,0.1) to[out=up,in=-45] (0,0.6) -- (0,0.8);
                \draw (0.25,0) -- (0.45,0);
            \end{tikzpicture}
            -
            \begin{tikzpicture}[centerzero,xscale=-1]
                \draw (0,-0.8) -- (0,-0.6) to[out=135,in=down] (-0.45,-0.2) to[out=up,in=180] (0,0.25) arc(90:-90:0.25) to[out=180,in=down] (-0.45,0.2) to[out=up,in=225] (0,0.6) -- (0,0.8);
                \draw (0,-0.6) to[out=45,in=down] (0.45,-0.1) -- (0.45,0.1) to[out=up,in=-45] (0,0.6) -- (0,0.8);
                \opendot{-0.4,0};
            \end{tikzpicture}
            +
            \begin{tikzpicture}[centerzero]
                \draw (0,-0.8) -- (0,-0.5) to[out=135,in=down] (-0.4,-0.2) to[out=up,in=180] (0,0.2) to[out=0,in=up] (0.4,-0.2) to[out=down,in=45] (0,-0.5);
                \draw (0,0.8) -- (0,0.5) to[out=-135,in=up] (-0.4,0.2) to[out=down,in=180] (0,-0.2) to[out=0,in=down] (0.4,0.2) to[out=up,in=-45] (0,0.5);
                \opendot{0.34,0};
            \end{tikzpicture}
        \right)
        \overset{\cref{chess}}{=}
        \frac{1}{4}
        \left(
            0 -
            \begin{tikzpicture}[centerzero]
                \draw (0,-0.4) -- (0,-0.2) to[out=135,in=down] (-0.15,0) to[out=up,in=225] (0,0.2) -- (0,0.4);
                \draw (0,-0.2) to[out=45,in=down] (0.15,0) to[out=up,in=-45] (0,0.2);
                \draw (-0.15,0) -- (0.15,0);
            \end{tikzpicture}
            -
            \begin{tikzpicture}[centerzero]
                \draw (0,-0.7) -- (0,-0.5) to[out=45,in=-45] (0.2,-0.15) -- (0.2,0.15) to[out=45,in=-45] (0,0.5) -- (0,0.7);
                \draw (0.2,0.15) to[out=135,in=up,looseness=1.5] (0,0) to[out=down,in=225,looseness=1.5] (0.2,-0.15);
                \draw (0,-0.5) to[out=135,in=down] (-0.2,0) to[out=up,in=225] (0,0.5);
            \end{tikzpicture}
            +
            \begin{tikzpicture}[centerzero]
                \draw (0,-0.7) -- (0,-0.5) to[out=45,in=-45,looseness=1.5] (0,-0.1) -- (0,0.1) to[out=45,in=-45,looseness=1.5] (0,0.5) -- (0,0.7);
                \draw (0,-0.5) to[out=135,in=-135,looseness=1.5] (0,-0.1) -- (0,0.1) to[out=135,in=-135,looseness=1.5] (0,0.5);
            \end{tikzpicture}
        \right)
        \overset{\cref{chess}}{\underset{\cref{triangle}}{=}}
        \frac{\alpha^2(\delta-2)}{8(\delta+2)} 1_\go,
        \\
        \begin{tikzpicture}[centerzero]
            \draw (-0.25,-0.1) -- (-0.25,0.1) arc(180:0:0.25) -- (0.25,-0.1) arc(360:180:0.25) -- cycle;
            \draw (0,-0.8) -- (0,-0.6) to[out=135,in=down] (-0.45,-0.1) -- (-0.45,0.1) to[out=up,in=225] (0,0.6) -- (0,0.8);
            \draw (0,-0.6) to[out=45,in=down] (0.45,-0.1) -- (0.45,0.1) to[out=up,in=-45] (0,0.6) -- (0,0.8);
            \antbox{-0.6,-0.25}{-0.1,-0.05};
            \antbox{0.1,-0.25}{0.6,-0.05};
            \draw (0.25,0.1) -- (0.45,0.1);
            \draw (-0.25,0.1) -- (-0.45,0.1);
        \end{tikzpicture}
        =
        \frac{1}{4}
        \left(
            \begin{tikzpicture}[centerzero]
                \draw (-0.25,-0.1) -- (-0.25,0.1) arc(180:0:0.25) -- (0.25,-0.1) arc(360:180:0.25) -- cycle;
                \draw (0,-0.8) -- (0,-0.6) to[out=135,in=down] (-0.45,-0.1) -- (-0.45,0.1) to[out=up,in=225] (0,0.6) -- (0,0.8);
                \draw (0,-0.6) to[out=45,in=down] (0.45,-0.1) -- (0.45,0.1) to[out=up,in=-45] (0,0.6) -- (0,0.8);
                \draw (0.25,0) -- (0.45,0);
                \draw (-0.25,0) -- (-0.45,0);
            \end{tikzpicture}
            -
            \begin{tikzpicture}[centerzero]
                \draw (0,-0.8) -- (0,-0.6) to[out=135,in=down] (-0.45,-0.2) to[out=up,in=180] (0,0.25) arc(90:-90:0.25) to[out=180,in=down] (-0.45,0.2) to[out=up,in=225] (0,0.6) -- (0,0.8);
                \draw (0,-0.6) to[out=45,in=down] (0.45,-0.1) -- (0.45,0.1) to[out=up,in=-45] (0,0.6) -- (0,0.8);
                \draw (0.25,0) -- (0.45,0);
                \opendot{-0.4,0};
            \end{tikzpicture}
            -
            \begin{tikzpicture}[centerzero,xscale=-1]
                \draw (0,-0.8) -- (0,-0.6) to[out=135,in=down] (-0.45,-0.2) to[out=up,in=180] (0,0.25) arc(90:-90:0.25) to[out=180,in=down] (-0.45,0.2) to[out=up,in=225] (0,0.6) -- (0,0.8);
                \draw (0,-0.6) to[out=45,in=down] (0.45,-0.1) -- (0.45,0.1) to[out=up,in=-45] (0,0.6) -- (0,0.8);
                \opendot{-0.4,0};
                \draw (0.25,0) -- (0.45,0);
            \end{tikzpicture}
            +
            \begin{tikzpicture}[centerzero]
                \draw (0,-0.8) -- (0,-0.5) to[out=135,in=down] (-0.4,-0.2) to[out=up,in=180] (0,0.2) to[out=0,in=up] (0.4,-0.2) to[out=down,in=45] (0,-0.5);
                \draw (0,0.8) -- (0,0.5) to[out=-135,in=up] (-0.4,0.2) to[out=down,in=180] (0,-0.2) to[out=0,in=down] (0.4,0.2) to[out=up,in=-45] (0,0.5);
                \opendot{0.34,0};
                \opendot{-0.34,0};
            \end{tikzpicture}
        \right)
        \overset{\cref{chess}}{=}
        \frac{1}{4}
        \left(
            \alpha\,
            \begin{tikzpicture}[centerzero]
                \draw (0,-0.4) -- (0,-0.2) to[out=135,in=down] (-0.15,0) to[out=up,in=225] (0,0.2) -- (0,0.4);
                \draw (0,-0.2) to[out=45,in=down] (0.15,0) to[out=up,in=-45] (0,0.2);
                \draw (-0.15,0) -- (0.15,0);
            \end{tikzpicture}
            -
            \begin{tikzpicture}[centerzero,xscale=-1]
                \draw (0,-0.7) -- (0,-0.5) to[out=45,in=-45] (0.2,-0.15) -- (0.2,0.15) to[out=45,in=-45] (0,0.5) -- (0,0.7);
                \draw (0.2,0.15) to[out=135,in=up,looseness=1.5] (0,0) to[out=down,in=225,looseness=1.5] (0.2,-0.15);
                \draw (0,-0.5) to[out=135,in=down] (-0.2,0) to[out=up,in=225] (0,0.5);
                \draw (-0.2,0) -- (0,0);
            \end{tikzpicture}
            -
            \begin{tikzpicture}[centerzero]
                \draw (0,-0.7) -- (0,-0.5) to[out=45,in=-45] (0.2,-0.15) -- (0.2,0.15) to[out=45,in=-45] (0,0.5) -- (0,0.7);
                \draw (0.2,0.15) to[out=135,in=up,looseness=1.5] (0,0) to[out=down,in=225,looseness=1.5] (0.2,-0.15);
                \draw (0,-0.5) to[out=135,in=down] (-0.2,0) to[out=up,in=225] (0,0.5);
                \draw (-0.2,0) -- (0,0);
            \end{tikzpicture}
            +
            \begin{tikzpicture}[centerzero]
                \draw (0,-0.8) -- (0,-0.6) to[out=135,in=225] (-0.2,-0.1) -- (-0.2,0.1) to[out=135,in=225] (0,0.6) -- (0,0.8);
                \draw (0,-0.6) to[out=45,in=-45] (0.2,-0.1) -- (0.2,0.1) to[out=45,in=-45] (0,0.6);
                \draw (-0.2,0.1) to[out=45,in=135,looseness=1.5] (0.2,0.1);
                \draw (-0.2,-0.1) to[out=-45,in=-135,looseness=1.5] (0.2,-0.1);
            \end{tikzpicture}
        \right)
        \\ \qquad \qquad \qquad
        \overset{\cref{chess}}{\underset{\cref{triangle}}{=}}
        \frac{\alpha^3 (2 - \delta)(3 \delta + 2)}{16(\delta+2)^2} 1_\go.
    \end{gather*}
    Thus
    \begin{align*}
        \begin{tikzpicture}[centerzero]
            \draw (-0.6,-0.2) rectangle (-0.1,0.2);
            \node at (-0.35,0) {$e_1$};
            \draw (0.6,-0.2) rectangle (0.1,0.2);
            \node at (0.35,0) {$e_1$};
            \draw (-0.25,0.2) arc(180:0:0.25);
            \draw (0.25,-0.2) arc(360:180:0.25);
            \draw (-0.45,0.2) to[out=up,in=225] (0,0.7) -- (0,0.9);
            \draw (0.45,0.2) to[out=up,in=-45] (0,0.7);
            \draw (-0.45,-0.2) to[out=down,in=135] (0,-0.7) -- (0,-0.9);
            \draw (0.45,-0.2) to[out=down,in=45] (0,-0.7);
        \end{tikzpicture}
        &= \frac{64}{(\delta+10)^2}
        \left(
            \begin{tikzpicture}[centerzero]
                \antbox{-0.6,-0.1}{-0.1,0.1};
                \antbox{0.6,-0.1}{0.1,0.1};
                \draw (-0.25,0.1) arc(180:0:0.25);
                \draw (0.25,-0.1) arc(360:180:0.25);
                \draw (-0.45,0.1) to[out=up,in=225] (0,0.6) -- (0,0.8);
                \draw (0.45,0.1) to[out=up,in=-45] (0,0.6);
                \draw (-0.45,-0.1) to[out=down,in=135] (0,-0.6) -- (0,-0.8);
                \draw (0.45,-0.1) to[out=down,in=45] (0,-0.6);
            \end{tikzpicture}
            +
            \frac{\delta+2}{4\alpha}
            \begin{tikzpicture}[centerzero]
                \draw (-0.25,-0.1) -- (-0.25,0.1) arc(180:0:0.25) -- (0.25,-0.1) arc(360:180:0.25) -- cycle;
                \draw (0,-0.8) -- (0,-0.6) to[out=135,in=down] (-0.45,-0.1) -- (-0.45,0.1) to[out=up,in=225] (0,0.6) -- (0,0.8);
                \draw (0,-0.6) to[out=45,in=down] (0.45,-0.1) -- (0.45,0.1) to[out=up,in=-45] (0,0.6) -- (0,0.8);
                \antbox{-0.6,-0.1}{-0.1,0.1};
                \antbox{0.1,-0.25}{0.6,-0.05};
                \draw (0.25,0.1) -- (0.45,0.1);
            \end{tikzpicture}
            +
            \frac{\delta+2}{4\alpha}
            \begin{tikzpicture}[centerzero,xscale=-1]
                \draw (-0.25,-0.1) -- (-0.25,0.1) arc(180:0:0.25) -- (0.25,-0.1) arc(360:180:0.25) -- cycle;
                \draw (0,-0.8) -- (0,-0.6) to[out=135,in=down] (-0.45,-0.1) -- (-0.45,0.1) to[out=up,in=225] (0,0.6) -- (0,0.8);
                \draw (0,-0.6) to[out=45,in=down] (0.45,-0.1) -- (0.45,0.1) to[out=up,in=-45] (0,0.6) -- (0,0.8);
                \antbox{-0.6,-0.1}{-0.1,0.1};
                \antbox{0.1,-0.25}{0.6,-0.05};
                \draw (0.25,0.1) -- (0.45,0.1);
            \end{tikzpicture}
            +
            \frac{(\delta+2)^2}{16 \alpha^2}
            \begin{tikzpicture}[centerzero]
                \draw (-0.25,-0.1) -- (-0.25,0.1) arc(180:0:0.25) -- (0.25,-0.1) arc(360:180:0.25) -- cycle;
                \draw (0,-0.8) -- (0,-0.6) to[out=135,in=down] (-0.45,-0.1) -- (-0.45,0.1) to[out=up,in=225] (0,0.6) -- (0,0.8);
                \draw (0,-0.6) to[out=45,in=down] (0.45,-0.1) -- (0.45,0.1) to[out=up,in=-45] (0,0.6) -- (0,0.8);
                \antbox{-0.6,-0.25}{-0.1,-0.05};
                \antbox{0.1,-0.25}{0.6,-0.05};
                \draw (0.25,0.1) -- (0.45,0.1);
                \draw (-0.25,0.1) -- (-0.45,0.1);
            \end{tikzpicture}
        \right)
        \\
        &=
        \frac{3 \alpha (2-\delta) (\delta - 26)}{4 (\delta + 10)^2} 1_\go.
        \qedhere
    \end{align*}
\end{proof}

\begin{cor} \label{twentysix}
    If $\alpha \ne 0$ and $\delta \notin \{-2,-10,2,26\}$, then
    \[
        \frac{4(\delta+10)^2}{3 \alpha (2-\delta)(\delta-26)}
        \begin{tikzpicture}[centerzero]
            \draw (-0.25,-1.15) -- (-0.25,-1);
            \draw (-0.45,-1.15) -- (-0.45,-1);
            \draw (0.25,-1.15) -- (0.25,-1);
            \draw (0.45,-1.15) -- (0.45,-1);
            \draw (-0.6,-1) rectangle (-0.1,-0.6);
            \node at (-0.35,-0.8) {$e_1$};
            \draw (0.6,-1) rectangle (0.1,-0.6);
            \node at (0.35,-0.8) {$e_1$};
            \draw (-0.25,-0.6) arc(180:0:0.25);
            \draw (-0.45,-0.6) to[out=up,in=225] (0,-0.1) -- (0,0.1) to[out=135,in=down] (-0.45,0.6);
            \draw (0.45,-0.6) to[out=up,in=-45] (0,-0.1);
            \draw (0,0.1) to[out=45,in=down] (0.45,0.6);
            \draw (-0.25,0.6) arc(180:360:0.25);
            \draw (-0.6,1) rectangle (-0.1,0.6);
            \draw (0.6,1) rectangle (0.1,0.6);
            \node at (-0.35,0.8) {$e_1$};
            \node at (0.35,0.8) {$e_1$};
            \draw (-0.25,1.15) -- (-0.25,1);
            \draw (-0.45,1.15) -- (-0.45,1);
            \draw (0.25,1.15) -- (0.25,1);
            \draw (0.45,1.15) -- (0.45,1);
        \end{tikzpicture}
    \]
    is a nonzero idempotent endomorphism.  In particular, $\go$ is a direct summand of $(\go^{\otimes 2}, e_1)^{\otimes 2}$ in $\Kar(\Fcat)$.
\end{cor}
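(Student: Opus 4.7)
The plan is to recognize the displayed diagram as a composite $u \circ v$, where $v \colon \go^{\otimes 4} \to \go$ is its lower half and $u \colon \go \to \go^{\otimes 4}$ is its upper half, and then to invoke the preceding lemma, which computes the opposite composite $v \circ u$ as a scalar multiple of $1_\go$.

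Concretely, I would take $v$ to be the bottom half of the displayed diagram, starting from four input strands at the bottom, passing through $e_1 \otimes e_1$, then closing the inner two strands with a cap and merging the outer two strands at a trivalent vertex, terminating at the single middle $\go$ strand. Dually, let $u$ be the upper half: a single $\go$ strand splits at a trivalent vertex into two outer strands, two inner strands are created by a cup, and all four pass through $e_1 \otimes e_1$. Inspection shows that the displayed diagram equals $u \circ v$, while the diagram of the preceding lemma is exactly $v \circ u$. Writing $\lambda := \frac{3 \alpha (2-\delta)(\delta-26)}{4(\delta+10)^2}$, that lemma gives $v \circ u = \lambda \cdot 1_\go$.

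Under the hypotheses $\alpha \ne 0$ and $\delta \notin \{-2,-10,2,26\}$, the scalar $\lambda$ is well defined and nonzero (the values $-10$ and $-2$ are needed, respectively, for $e_1$ to be defined and for the category to be nontrivial, while $2$ and $26$ are excluded so that $\lambda \ne 0$). Setting $p := \lambda^{-1}(u \circ v)$, I would compute
\[
    p \circ p = \lambda^{-2} \, u \circ (v \circ u) \circ v = \lambda^{-2} \, u \circ (\lambda \cdot 1_\go) \circ v = \lambda^{-1}(u \circ v) = p,
\]
so $p$ is idempotent; and $p \ne 0$, since $u \circ v = 0$ would force $0 = v \circ (u \circ v) = \lambda v$, hence $v = 0$ and $v \circ u = 0$, contradicting $\lambda \ne 0$.

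Finally, since $e_1^2 = e_1$ by \cref{neanderthal}, both $u$ and $v$ absorb $e_1 \otimes e_1$, so $(e_1 \otimes e_1) \circ p \circ (e_1 \otimes e_1) = p$; thus $p$ represents an idempotent endomorphism of $(\go^{\otimes 4}, e_1 \otimes e_1) = (\go^{\otimes 2}, e_1)^{\otimes 2}$ in $\Kar(\Fcat)$. The factorization $p = u \circ (\lambda^{-1} v)$, together with $(\lambda^{-1} v) \circ u = 1_\go$, exhibits $\go$ as a retract of $(\go^{\otimes 2}, e_1)^{\otimes 2}$, yielding the final direct-summand claim. There is no real obstacle in the argument; the entire content is the diagrammatic identification of the preceding lemma's closed diagram with the ``middle-strand-cut'' composite $v \circ u$ of the one appearing here.
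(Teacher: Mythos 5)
Your proof is correct and takes the same approach the paper intends: the corollary is stated without a proof precisely because it follows immediately from the preceding lemma once one recognizes the displayed morphism as $u \circ v$ and the lemma's diagram as $v \circ u$, after which the standard retraction-to-idempotent argument gives the result. Your identification of the cut, the idempotency computation, the nonvanishing argument, and the observation that $u$ and $v$ absorb $e_1 \otimes e_1$ so that the idempotent lives in $\End_{\Kar(\Fcat)}((\go^{\otimes 2},e_1)^{\otimes 2})$, are all exactly what the paper leaves to the reader.
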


\begin{rem} \label{sack}
    \Cref{twentysix,hacky} explain the importance of the choice $\delta=26$.  The $\fg$-module $V_{\omega_1}$ is the adjoint representation, and $V_{\omega_1}^{\otimes 2}$ does not contain a copy of the natural representation $V = V_{\omega_4}$.  In fact, a lengthy computation shows that the morphism
    \[
        \begin{tikzpicture}[centerzero]
            \draw (-0.25,-1.15) -- (-0.25,-1);
            \draw (-0.45,-1.15) -- (-0.45,-1);
            \draw (0.25,-1.15) -- (0.25,-1);
            \draw (0.45,-1.15) -- (0.45,-1);
            \draw (-0.6,-1) rectangle (-0.1,-0.6);
            \node at (-0.35,-0.8) {$e_1$};
            \draw (0.6,-1) rectangle (0.1,-0.6);
            \node at (0.35,-0.8) {$e_1$};
            \draw (-0.25,-0.6) arc(180:0:0.25);
            \draw (-0.45,-0.6) to[out=up,in=225] (0,-0.1) -- (0,0.1);
            \draw (0.45,-0.6) to[out=up,in=-45] (0,-0.1);
        \end{tikzpicture}
    \]
    can be written as a linear combination of the morphisms \cref{brutal} with the two leftmost top strands brought to the bottom using caps.  The coefficients in this linear combination all vanish \emph{if and only if} $\delta=26$.
\end{rem}

\begin{rem} \label{webs}
    Suppose we have an idempotent $e_2 \in \Fcat(\go^{\otimes 3}, \go^{\otimes 3})$ such that $\Kar(\Phi)(\go^{\otimes 3}, e_2) = V_{\omega_2}$.  Let $\Wcat = \Wcat(\alpha,\delta)$ be the full monoidal subcategory of $\Kar(\Fcat_{\alpha,\delta})$ generated by
    \begin{equation} \label{list}
        (\go^{\otimes 2}, e_1),\quad
        (\go^{\otimes 3},e_2),\quad
        (\go^{\otimes 2},e_3),\quad
        \go.
    \end{equation}
    Then morphisms in $\Wcat$ can be depicted as string diagrams with strands labeled by elements of $\{1,2,3,4\}$, with a strand labeled $i$ corresponding to the identity morphism of the $i$-th object in the list \cref{list}.  The category $\Wcat$ should be a degenerate (that is, $q=1$) web category of type $F_4$.
\end{rem}


\bibliographystyle{alphaurl}
\bibliography{F4}

\end{document}